\def\sqpt{\raisebox{0.15pt}{\scalebox{0.6}{$\square$}}}
\def\fsqpt{\raisebox{0.15pt}{\scalebox{0.6}{$\blacksquare$}}}
\def\greycirc{\raisebox{.5pt}{\transparent{0.4}\tikz \draw[above=2cm] [fill=black,draw=black]  circle (2pt);}}
\theoremstyle{plain}
\newtheorem{theorem}{Theorem}
\newtheorem{proposition}{Proposition}[section]
\newtheorem{corollary}[theorem]{Corollary}
\newtheorem{lemma}[proposition]{Lemma}
\theoremstyle{definition}
\newtheorem{definition}[proposition]{Definition}
\theoremstyle{definition}
\newtheorem{notation}[proposition]{Notation}
\newtheorem{example}[proposition]{Example}
\newtheorem{question}[proposition]{Question}
\newtheorem{conjecture}[proposition]{Conjecture}
\title[Weighted Borel Generators]{Weighted Borel Generators}
\author{Seth Ireland}
\address{Dept.~of Mathematics, Colorado State University, Fort Collins, CO, USA}
\email{seth.ireland@colostate.edu}
\begin{document}

\newcommand{\B}[1]{Borel(#1)}

\ytableausetup{centertableaux}

\begin{abstract}
Strongly stable ideals are a class of monomial ideals which correspond to generic initial ideals in characteristic zero and can be described completely by their Borel generators, a subset of the minimal monomial generators of the ideal. In \cite{francisco2011borel}, Francisco, Mermin, and Schweig develop formulas for the Hilbert series and Betti numbers of strongly stable ideals in terms of their Borel generators. In this work, a \textit{specialization} of strongly stable ideals is presented which further restricts the subset of relevant generators. A choice of weight vector $w\in\mathbb{N}_{> 0}^n$ restricts the set of strongly stable ideals to a subset designated as $w$-stable ideals. This restriction further compresses the Borel generators to a subset termed the \textit{weighted Borel generators} of the ideal. A new Macaulay2 package \verb|wStableIdeals.m2| has been developed alongside this paper and segments of code support computations within.
\end{abstract}

\maketitle

\section{Introduction}

Consider an invertible matrix $\alpha=(a_{ij})\in GL_n(\mathbb{K})$ for some field $\mathbb{K}$ and the action on the polynomial ring $\mathbb{K}[x_1,\dots,x_n]$ by
\begin{equation*}
    (x_1,\dots,x_n)\mapsto \Big(\sum_{i=1}^n a_{i1}x_i,\dots,\sum_{i=1}^n a_{in}x_i\Big).
\end{equation*}
Given an ideal $I\subset\mathbb{K}[x_1,\dots,x_n]$, a choice of monomial order $<$ (which respects the variable order $x_1>x_2>\cdots > x_n$) determines the initial ideal $in_<(I)$. If $\alpha\in GL_n(\mathbb{K})$ is \textit{generic}, then $in_<(\alpha\cdot I)$ is called the \textit{generic initial ideal}, denoted $gin_<(I)$. There is a filtration of subgroups
\begin{equation*}
    T_n(\mathbb{K})\subset B_n(\mathbb{K})\subset GL_n(\mathbb{K})
\end{equation*}
where $T_n(\mathbb{K})$ is the set of diagonal matrices and $B_n(\mathbb{K})$ is the set of upper-triangular matrices called the \textit{Borel subgroup}. By considering the ideals which are fixed by each of these subgroups, we get a filtration going the other direction. All monomial ideals are fixed by $T_n(\mathbb{K})$. Only powers of the maximal ideal $(x_1,\dots,x_n)$ are fixed by $GL_n(\mathbb{K})$. The intermediate class fixed by $B_n(\mathbb{K})$ are the Borel-fixed ideals and correspond exactly to generic initial ideals. In 1974, Galligo showed that generic initial ideals are Borel-fixed in characteristic zero \cite{galligo}. Bayer and Stillman extended this result to arbitrary characteristic in 1987 \cite{bayer1987theorem}. In 2004, Conca showed that if an ideal is Borel-fixed, then it is its own generic inital ideal \cite{conca2004koszul}. In summary, for any characteristic, the generic initial ideals are exactly the Borel-fixed ideals.

In characteristic zero, the Borel-fixed ideals (and therefore the generic initial ideals) correspond exactly to a class of monomial ideals called strongly stable ideals. Strongly stable ideals (also called Borel ideals) are a class of monomial ideals which are closed under a simple combinatorial operation known as a \textit{Borel move} \cite{herzog2011monomial}. In positive characteristic, Borel-fixed ideals are strongly stable if the characteristic is larger than any exponent appearing in any monomial generator of the ideal. This correspondence allows one to study invariants of generic initial ideals by taking advantage of the combinatorial properties of strongly stable ideals.

An arbitrary monomial ideal $I$ can be described by its set of minimal monomial generators, $G(I)$. When $I$ is strongly stable, it can be described by a subset $Bgens(I)\subseteq G(I)$, whose elements are called \textit{Borel generators}. In \cite{francisco2011borel}, Francisco, Mermin, and Schweig show how many invariants of strongly stable ideals can be computed directly from their Borel generators.

In this work, we \textit{specialize} the notion of a strongly stable ideal to that of a $w$-stable ideal where $w\in\mathbb{N}^n_{>0}$ is a monotone decreasing tuple of positive integers, called a weight vector. For any fixed weight vector $w$, the set of $w$-stable ideals is a restriction of the the set of strongly stable ideals. This restriction further compresses the minimal generating set from to a subset we call the \textit{weighted Borel generators}, denoted $Bgens_w(I)$. In this paper, we show how several invariants for $w$-stable ideals can be computed from their weighted Borel generators. 

Computations on arbitrary $w$-stable ideals can be decomposed into computations on \textit{principal} $w$-stable ideals (Propositions \ref{prop:basic1}, \ref{prop:basic2}, and \ref{prop:basic3}). The combinatorial structure of principal $w$-stable ideals is explored in Sections \ref{sec:principaltrees} and \ref{sec:catalandiagrams}, where we construct an $n$-ary tree and Catalan diagram (respectively) corresponding to each principal $w$-stable ideal. We show that these Catalan diagrams contain all the information needed to compute the Hilbert series (Theorems \ref{thm:principalhs} and \ref{thm:catalan1}) and graded Betti numbers (Theorems \ref{thm:catalan2} and \ref{cor:principal_poincare}) of principal $w$-stable ideals. In Section \ref{sec:principalcone}, we show how to determine when a given ideal is principally $w$-stable and for which weight vectors (Theorem \ref{principalConeThm}). In Section \ref{sec:apps}, we investigate a class of strongly stable ideals that appear to be well-behaved as $w$-stable ideals. We end with a conjecture linking non-homogeneous generic Gr\"obner fans and $w$-stable ideals (Conjecture \ref{conj1}).

Several other generalizations of strongly stable ideals have emerged over the last few decades. In his 1994 thesis, Pardue gives a combinatorial definition of $p$-Borel ideals and shows that they correspond exactly to Borel-fixed ideals in characteristic $p>0$ \cite{pardue1994nonstandard}. In 2013, Francisco, Mermin, and Schweig define $Q$-Borel ideals for a poset $Q$ on the variables $x_1,\dots,x_n$ \cite{francisco2013generalizing}. Recently (2021), DiPasquale and Nezhad define $L$-Borel ideals as a generalization of strongly stable ideals for which Borel moves need only be satisfied for a subset of the variables \cite{dipasquale2021koszul}. In 2002, Gasharov, Hibi, and Peeva define (strongly) $a$-stable ideals for an integer sequence $a=(a_1,\dots,a_n)$ as the image of (strongly) stable ideals in a polynomial ring with powers of the variables restricted by $a$ \cite{gasharov2002resolutions}. All four of these frameworks restrict the set of Borel moves under which an ideal must be closed. Contrast this with \cite{dalzotto2005non}, where Dalzotto and Sbarra define \textit{weighted strongly stable ideal} by \textit{weakening} the Borel move. They show that weighted strongly stable ideals correspond exactly to the upper triangular automorphisms of a weighted polynomial algebra $\mathbb{K}[x_1,\dots,x_n]$. This generalization is natural in the sense that it generalizes the correspondence between Borel-fixed ideals and strongly stable ideals to a correspondence between (upper triangular) graded automorphisms and weighted strongly stable ideals. However, as noted in their paper and in \cite{cox2005lectures}, this definition is unwieldy because \textit{every} monomial ideal is weighted strongly stable for a suitable choice of weight vector. This makes their weighted strongly stable ideals more difficult to work with than (classical) strongly stable ideals. For example, determining an analogous resolution to the Eliahou-Kervaire resolution \cite{eliahou1990minimal} (for arbitrary weight vector) would be equivalent to determining a minimal resolution for \textit{any} monomial ideal.

Throughout the paper, we use (version 1.0 of) the Macaulay2 package wStableIdeals.m2 to help with computations. The code for this package can be found at
\begin{center}
    \href{https://github.com/slireland94/wStableIdeals}{https://github.com/slireland94/wStableIdeals}
\end{center}
and is expected to appear in a future release of Macaulay2.

\section{Strongly Stable Ideals and w-Stable Ideals}\label{sec:background}

\subsection{Strongly Stable Ideals}
In this section, we set notation, recall definitions, and state basic facts about strongly stable ideals. We will work over the polynomial ring $R=K[y_1,\dots,y_n]$ with the variable order $y_1>y_2>\cdots > y_n$.

\begin{definition}
    Let $m\in R$ be a monomial. A \textit{Borel move} on $m$ is an operation that sends $m$ to a monomial
    \begin{equation*}
        m\cdot\frac{y_{i_1}}{y_{j_1}}\cdots\frac{y_{i_s}}{y_{j_s}},
    \end{equation*}
    where $i_t<j_t$ for all $t$ and $y_{j_1}\cdots y_{j_s}$ divides $m$.
\end{definition}

Given a monomial ideal $J\subset R$, we will use $G(J)$ to denote the minimal set of monomial generators. A monomial ideal $J\subset R$ is called \textit{strongly stable} if it is fixed by Borel moves. Explicitly, for all $m\in J$, if $u\in R$ is a monomial obtained by a Borel move on $m$, then $u\in J$.

We will now recall the definition of the Borel closure of a set of monomials in $R$ following notation in \cite{francisco2011borel}.
\begin{definition}
    Let $T=\{m_1,\dots,m_s\}\subset R$ be a set of monomials. Define $\B T$ to be the smallest strongly stable ideal containing $T$.
\end{definition}

\begin{definition}\label{def:borelorder}
    Factor $m_1=y_{i_1}y_{i_2}\cdots y_{i_r}$ and $m_2=y_{j_1}y_{j_2}\cdots y_{j_s}$ with $r\geq s$. If $i_k\leq j_k$ for all $k\leq s$, then we say that $m_1$ precedes $m_2$ in the \textit{Borel order} and write $m_1\succ m_2$.
\end{definition}

\subsection{w-Stable Ideals}\label{subsec:wstablebasics}

Let $\mathbb{N}^n_{>0}$ denote the set of tuples of positive integers. We will call any monotone decreasing tuple of positive integers $w=(w_1,\dots,w_n)\in\mathbb{N}^n_{>0}$ a \textit{weight vector}. Now let $S=\mathbb{K}[x_1,\dots,x_n]$, fix a weight vector $(w_1,\dots,w_n)$, and construct the ring homomorphism
\begin{align*}
    \psi:S&\rightarrow R\\
    x_i^{}&\mapsto y_i^{w_i}.
\end{align*}

Now, we define $w$-stable ideals in the polynomial ring $S$. Note that $R$ has variable order $y_1>y_2>\cdots > y_n$. In the last subsection, we defined the Borel closure of a set of monomials in $R$. Similarly, we can compute a \textit{weighted Borel closure} for a set of monomials $A\subseteq S$ by
\begin{equation*}
    \overline{A}:=\psi^{-1}(\B{\psi(A)}).
\end{equation*}
Monomial ideals in $S$ which are fixed by this closure operation are called $w$-stable ideals. When $A=\{m\}$ is a singleton, we will use $\overline{m}$ to denote the \textit{principal} $w$-stable ideal generated by $m$.

\begin{definition}
    A monomial ideal $I\subseteq S$ is \textit{$w$-stable} if $I=\overline{I}$.
\end{definition}

\begin{example}
    Let $w=(2,1)$ be the weight vector for $S=\mathbb{K}[x_1,x_2]$ and let $I=(x_1,x_2^2)\subset S$. Then $\psi(x_1)=y_1^2$ and $\psi(x_2)=y_2$, so $\psi(I)=(y_1^2,y_2^2)\subset R$. Then
    \begin{align*}
        \psi^{-1}(\B{\psi(I)}) &= \psi^{-1}((y_1^2,y_1y_2,y_2^2))\\
        &=I,
    \end{align*}
    so $I$ is $(2,1)-stable$. Note that the monomial $y_1y_2$ has empty preimage in $S$.

We can use the Macaulay2 package wStableIdeals to check our work.
\newpage
{\footnotesize
\begin{verbatim}


i1 : loadPackage "wStableIdeals"

o1 = wStableIdeals

o1 : Package

i2 : K = QQ

o2 = QQ

o2 : Ring

i3 : S = K[x_1,x_2]

o3 = S

o3 : PolynomialRing

i4 : I = ideal(x_1,x_2^2)

                 2
o4 = ideal (x , x )
             1   2

o4 : Ideal of S

i5 : borelClosure(I,Weights=>{2,1})

                 2
o5 = ideal (x , x )
             1   2

o5 : Ideal of S

i6 : iswStable(I,w)

o6 = true


\end{verbatim}}
We used the \verb|borelClosure| method to compute $\overline{I}=\psi^{-1}(\B{\psi(I)})$. Since the output is equal to $I$, $I$ is $(2,1)$-stable. The \verb|iswStable| method computes $\overline{I}$ and checks whether $\overline{I}=I$.
\end{example}

By default, weights in \verb|wStableIdeals.m2| are set to the vector of all ones. When $w=\mathbb{1}$ is the weight vector of all ones ($S$ has standard grading), we recover the classic definition of strongly stable ideals. For any fixed weight vector $w$, the set of $w$-stable ideals is a subset of the set of strongly stable ideals as the next proposition shows.

\begin{proposition}
    Fix a weight vector $w$. If $I\subset S$ is $w$-stable, then $I$ is strongly stable.
\end{proposition}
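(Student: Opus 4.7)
The plan is to take an arbitrary Borel move in $S$ and show that its image under $\psi$ lies in the strongly stable ideal $\B{\psi(I)}$, so that the preimage lands back in $\overline{I}=I$. Since a Borel move is a composition of elementary moves of the form $m\mapsto m\cdot x_i/x_j$ with $i<j$ and $x_j\mid m$, and since intermediate monomials in such a composition still satisfy the required divisibility (each factor $x_{j_t}$ originally dividing $m$ is untouched by swaps involving the other indices), it suffices to handle one elementary move at a time.

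So fix a monomial $m\in I$ and let $u = m\cdot x_i/x_j$ with $i<j$ and $x_j\mid m$. Applying $\psi$ gives
\begin{equation*}
    \psi(u) \;=\; \psi(m)\cdot \frac{y_i^{w_i}}{y_j^{w_j}}.
\end{equation*}
The key observation is that $w$ is monotone decreasing, so $w_i \ge w_j$. Thus
\begin{equation*}
    \psi(u) \;=\; \bigl(\psi(m)\cdot y_i^{w_i-w_j}\bigr)\cdot \frac{y_i^{w_j}}{y_j^{w_j}},
\end{equation*}
and we can reach $\psi(u)$ from $\psi(m)$ in two legitimate steps inside $\B{\psi(I)}$: first multiply $\psi(m)$ by the monomial $y_i^{w_i-w_j}\in R$ (the product stays in the ideal $\B{\psi(I)}$ because ideals are closed under multiplication), and then apply the Borel move that swaps $w_j$ copies of $y_j$ for $y_i$. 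This Borel move is legal because $x_j\mid m$ implies $y_j^{w_j}\mid \psi(m)$, which still divides $\psi(m)\cdot y_i^{w_i-w_j}$; strong stability of $\B{\psi(I)}$ then places the result in the ideal.

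Hence $\psi(u)\in\B{\psi(I)}$, so $u\in\psi^{-1}(\B{\psi(I)})=\overline{I}=I$. Iterating over the elementary moves in any Borel move on any generator of $I$ shows $I$ is closed under Borel moves, i.e. strongly stable.

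The only real obstacle is conceptual: Borel moves in $S$ do not translate into Borel moves in $R$ directly, because $\psi$ is not degree-preserving when $w_i\neq w_j$. The monotonicity hypothesis $w_1\ge\cdots\ge w_n$ is exactly what allows us to compensate by first multiplying by a power of $y_i$ (bringing us to a higher-degree multiple of $\psi(m)$) before executing an honest Borel swap in $R$. Without monotonicity this step would require multiplication by $y_i^{w_i-w_j}$ with a negative exponent, which is precisely why the weight vector is required to be decreasing.
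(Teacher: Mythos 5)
Your proof is correct and follows essentially the same approach as the paper's: reduce to elementary Borel moves, apply $\psi$, factor $\psi(u)/\psi(m)$ as (Borel swap of $w_j$ copies of $y_j$ for $y_i$) times $y_i^{w_i-w_j}$, and use monotonicity of $w$ to see $y_i^{w_i-w_j}$ is a genuine monomial. The only difference is cosmetic: you multiply by $y_i^{w_i-w_j}$ before the swap, the paper after, and your opening paragraph on reducing to elementary moves is implicit in the paper's treatment.
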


\begin{proof}
    Let $m\in I$ with $x_j\vert m$ and $i<j$. We need to show that $m\frac{x_i}{x_j}\in I$. Notice that
    \begin{align*}
        \psi(m\frac{x_i}{x_j}) &= \psi(m)\frac{y_i^{w_i}}{y_j^{w_j}}\\
        &= \psi(m)\frac{y_i^{w_j}}{y_j^{w_j}}y_i^{w_i-w_j}
    \end{align*}
    Since $x_j\vert m\in I$, we have $y_j^{w_j}\vert\psi(m)\in\psi(I)$, so by (classical) Borel closure,
    \begin{equation*}
        \psi(m)\cdot\frac{y_i^{w_j}}{y_j^{w_j}}\in \B{\psi(I)}.
    \end{equation*}
    Since $i<j$, we have $w_i\geq w_j$, so $y_i^{w_i-w_j}$ is a monomial. It follows that $\psi(m\frac{x_i}{x_j})\in\B{\psi(I)}$, so $m\frac{x_i}{x_j}\in I$, and we have shown that $I$ is strongly stable.
\end{proof}

The previous proposition shows that (for a fixed weight vector), the set of $w$-stable ideals is a subset of the set of strongly stable ideals. From this perspective, $w$-stable ideals are a refinement of strongly stable ideals. Contrast this with Dalzotto and Sbarra's definition of \textit{weighted strongly stable ideals} in \cite{dalzotto2005non} where, for any given monomial ideal, one can find a weight vector which makes the ideal weighted strongly stable. We can see the difference explicitly in the following example.

\begin{example}
    Let $I=(x_1^2,x_2^2)\subset S=\mathbb{K}[x_1,x_2]$. Using Dalzotto and Sbarra's definition (with order of variables reversed to make their notation match ours), $I$ is weighted strongly stable with respect to $w=(4,3)$. However, since $I$ is not strongly stable, there exists no weight vector for which $I$ is $w$-stable.
\end{example}

\begin{definition}
    Let $u,v\in S$ be monomials. We say that \textit{$u$ precedes $v$ in the $w$-Borel order} and write $u\succ_{w}v$ if $\psi(u)\succ\psi(v)$.
\end{definition}

\begin{proposition}
    Let $m,u\in S$ be monomials. Then $m\prec_w u$ if and only if $u\in\overline{m}$.
\end{proposition}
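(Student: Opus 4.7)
The plan is to translate the statement, via the definitions of $\overline{m}$ and $\prec_w$, into the classical Borel-closure characterization in $R$. By construction, $u \in \overline{m}$ unpacks to $\psi(u) \in \B{\psi(m)}$, and $m \prec_w u$ unpacks to $\psi(u) \succ \psi(m)$. So the entire claim reduces to the following unweighted statement: for monomials $p, q \in R$, we have $q \in \B{p}$ if and only if $q \succ p$ in the Borel order of Definition \ref{def:borelorder}. Applying this to $p = \psi(m)$ and $q = \psi(u)$ then finishes the proposition.

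To prove the classical fact, I would handle the two directions separately. For ``$\Leftarrow$'' I would argue constructively: if $q = y_{i_1}\cdots y_{i_r}$ and $p = y_{j_1}\cdots y_{j_s}$ are written in sorted factored form with $r \geq s$ and $i_k \leq j_k$ for all $k \leq s$, then the truncation $v := y_{i_1}\cdots y_{i_s}$ is obtained from $p$ by a single simultaneous Borel move that replaces each $y_{j_k}$ with $i_k < j_k$ by $y_{i_k}$, and $q = v \cdot y_{i_{s+1}}\cdots y_{i_r}$ is then a monomial multiple of $v$, so $q \in \B{p}$. For ``$\Rightarrow$'' I would show that the monomial set $J := \{q \in R : q \succ p\}$ already forms a strongly stable ideal containing $p$; the minimality of $\B{p}$ then forces $\B{p} \subseteq J$. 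The key verifications are transitivity of $\succ$, that divisibility implies $\succ$ (since appending factors can only lower the sorted index sequence), and that a Borel move on any $q \in J$ preserves $q \succ p$.

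The main technical obstacle in both directions is the sorted-factorization bookkeeping: one must track how the sorted index sequence of a monomial rearranges when a factor is inserted or replaced, and verify that the componentwise inequalities defining $\succ$ survive each such operation. These checks are routine but not automatic, and they carry essentially the only substantive content of the proof beyond the definitional unwrapping in the first paragraph.
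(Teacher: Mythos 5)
Your reduction in the first paragraph is exactly what the paper does: unpack $m \prec_w u$ as $\psi(m) \prec \psi(u)$ and $u \in \overline{m}$ as $\psi(u) \in \B{\psi(m)}$, so that the proposition becomes the unweighted statement that membership in a principal Borel ideal is equivalent to dominance in the Borel order. At that point the paper stops and cites Lemma~2.11 of \cite{francisco2011borel}, whereas you go on to sketch a direct proof of that lemma. Your sketch is sound: the ``$\Leftarrow$'' direction via truncation plus a simultaneous Borel move works (the sorted truncation $v$ of $q$ is reached from $p$ by one Borel move in the sense of the paper's definition, and $q$ is then a monomial multiple of $v$); and the ``$\Rightarrow$'' direction via showing $\{q : q \succ p\}$ is already strongly stable is the standard argument, with the key monotonicity facts (appending a variable, replacing an index by a smaller one, transitivity) all holding after re-sorting. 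The trade-off is purely one of self-containedness versus brevity: your route reproves a known lemma and must manage the re-sorting bookkeeping you flag; the paper's route is two lines because it outsources precisely that bookkeeping to the cited reference. Both are correct.
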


\begin{proof}
    By definition, $m\prec_w u$ if and only if $\psi(m)\prec \psi(u)$. From Lemma 2.11 of \cite{francisco2011borel}, we see that $\psi(m)\prec \psi(u)$ if and only if $\psi(u)\in Borel(\psi(m))$.
\end{proof}

Some authors (see \cite{gasharov2002resolutions}, \cite{herzog2002generic}) call $\overline{m}$ the \textit{shadow of $m$} or \textit{big shadow of $m$}. Rather than giving a combinatorial description of weighted Borel moves corresponding to classical Borel moves, we define a $w$-Borel move on $m$ as any move $m\mapsto u$ where $m\prec_w u$. The figure below gives a visualization in two variables.

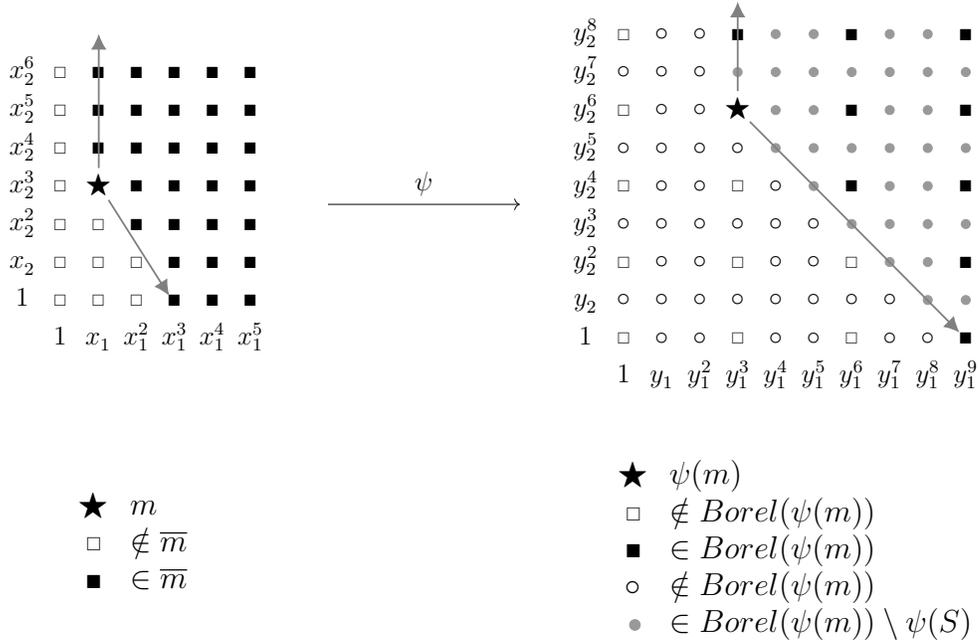
\begin{figure}[H]
    \caption{A visualization of $\psi:S\rightarrow R$ for $w=(3,2)$}
    \label{fig:psiexample}
    
    \begin{flushleft}
        \scalebox{0.85}{
        \begin{tikzpicture}
        \node (n)[] {
        \begin{ytableau}
        \none[x_2^6] & \none[\sqpt] & \none[\fsqpt] & \none[\fsqpt] & \none[\fsqpt] & \none[\fsqpt] & \none[\fsqpt]  \\
        \none[x_2^5] & \none[\sqpt] & \none[\fsqpt] & \none[\fsqpt] &\none[\fsqpt] & \none[\fsqpt] & \none[\fsqpt] \\
        \none[x_2^4] & \none[\sqpt] & \none[\fsqpt] & \none[\fsqpt] & \none[\fsqpt] & \none[\fsqpt] & \none[\fsqpt] \\
        \none[x_2^3] & \none[\sqpt] & \none[\bigstar] & \none[\fsqpt] & \none[\fsqpt] & \none[\fsqpt] & \none[\fsqpt]  \\
        \none[x_2^2] & \none[\sqpt] & \none[\sqpt] & \none[\fsqpt] & \none[\fsqpt] & \none[\fsqpt] & \none[\fsqpt] \\
        \none[x_2^{}] & \none[\sqpt] & \none[\sqpt] & \none[\sqpt] & \none[\fsqpt] & \none[\fsqpt] & \none[\fsqpt]\\
        \none[1] & \none[\sqpt] & \none[\sqpt] & \none[\sqpt] & \none[\fsqpt] & \none[\fsqpt] & \none[\fsqpt]\\
        \none[] & \none[1] & \none[x_1^{}] & \none[x_1^2] & \none[x_1^3] & \none[x_1^4] & \none[x_1^5]
        \end{ytableau}};
        \draw[-{Latex[length=7pt,width=7pt]},thick,gray] ([xshift=1.80cm,yshift=2.60cm]n.south west)--([xshift=2.75cm,yshift=1.1cm]n.south west);
        \draw[-{Latex[length=7pt,width=7pt]},thick,gray] ([xshift=1.647cm,yshift=3.1cm]n.south west)--([xshift=1.647cm,yshift=5.2cm]n.south west);
        \hspace{3cm}
        \draw[->] (0,0) -- (3,0) node[midway, above] {$\psi$};
        \hspace{7cm}

        \node (n2)[] {
        \begin{ytableau}
        \none[y_2^8] & \none[\sqpt] & \none[\circ] & \none[\circ] & \none[\fsqpt] & \none[\greycirc] & \none[\greycirc] & \none[\fsqpt] & \none[\greycirc] & \none[\greycirc]& \none[\fsqpt] \\
        \none[y_2^7] & \none[\circ] & \none[\circ] & \none[\circ] & \none[\greycirc] & \none[\greycirc] & \none[\greycirc] & \none[\greycirc] & \none[\greycirc]& \none[\greycirc]& \none[\greycirc] \\
        \none[y_2^6] & \none[\sqpt] & \none[\circ] & \none[\circ] & \none[\bigstar] & \none[\greycirc] & \none[\greycirc] & \none[\fsqpt] & \none[\greycirc] & \none[\greycirc] & \none[\fsqpt] \\
        \none[y_2^5] & \none[\circ] & \none[\circ] & \none[\circ] &\none[\circ] & \none[\greycirc] & \none[\greycirc] & \none[\greycirc] & \none[\greycirc]& \none[\greycirc]& \none[\greycirc]  \\
        \none[y_2^4] & \none[\sqpt] & \none[\circ] & \none[\circ] & \none[\sqpt] & \none[\circ] & \none[\greycirc] & \none[\fsqpt] & \none[\greycirc]& \none[\greycirc]& \none[\fsqpt]  \\
        \none[y_2^3] & \none[\circ] & \none[\circ] & \none[\circ] & \none[\circ] & \none[\circ] & \none[\circ] & \none[\greycirc] & \none[\greycirc]& \none[\greycirc] & \none[\greycirc] \\
        \none[y_2^2] & \none[\sqpt] & \none[\circ] & \none[\circ] & \none[\sqpt] & \none[\circ] & \none[\circ] & \none[\sqpt] & \none[\greycirc]& \none[\greycirc] & \none[\fsqpt] \\
        \none[y_2^{}] & \none[\circ] & \none[\circ] & \none[\circ] & \none[\circ] & \none[\circ] & \none[\circ] & \none[\circ] & \none[\circ]& \none[\greycirc]& \none[\greycirc] \\
        \none[1] & \none[\sqpt] & \none[\circ] & \none[\circ] & \none[\sqpt] & \none[\circ] & \none[\circ] & \none[\sqpt] & \none[\circ]& \none[\circ]& \none[\fsqpt]  \\
        \none[] & \none[1] & \none[y_1^{}] & \none[y_1^2] & \none[y_1^3] & \none[y_1^4] & \none[y_1^5] &\none[y_1^6] & \none[y_1^7] &\none[y_1^8] &\none[y_1^9]
        \end{ytableau}};
        \draw[-{Latex[length=7pt,width=7pt]},thick,gray] ([xshift=3.02cm,yshift=4.42cm]n2.south west)--([xshift=6.30cm,yshift=1.12cm]n2.south west);
        \draw[-{Latex[length=7pt,width=7pt]},thick,gray] ([xshift=2.83cm,yshift=4.9cm]n2.south west)--([xshift=2.83cm,yshift=6.3cm]n2.south west);
        \end{tikzpicture}}
    \end{flushleft}

\end{figure}

\begin{parcolumns}[colwidths={1=0.45\textwidth, 2=0.45\textwidth}]{2}
\colchunk{
  \hspace{1cm} 
  \begin{minipage}[t]{\linewidth}
    \vspace{3pt}
    \begin{itemize}
        \item[$\bigstar$\phantom{'}] $m$
        \item[\sqpt\phantom{2}] $\notin\overline{m}$
        \item[\fsqpt\phantom{2}] $\in\overline{m}$
    \end{itemize}
  \end{minipage}
}

\colchunk{
    \hspace{-1cm}
    \begin{minipage}[t]{\linewidth}
        \begin{itemize}
            \item[$\bigstar$\phantom{'}] $\psi(m)$
            \item[\sqpt\phantom{2}] $\notin Borel(\psi(m))$
            \item[\fsqpt\phantom{2}] $\in Borel(\psi(m))$
            \item[$\circ$\phantom{/}] $\notin Borel(\psi(m))$
            \item[$\greycirc$\phantom{2}] $\in Borel(\psi(m))\setminus\psi(S)$
        \end{itemize}
    \end{minipage}

}
\end{parcolumns}
\phantom{stuff\\}

In Figure \ref{fig:psiexample}, the monomial lattices for $S=\mathbb{K}[x_1,x_2]$ and $R=\mathbb{K}[y_1,y_2]$ are shown on the left and right respectively. We have $m=x_1x_2^3\in S$ (and its image $\psi(m)\in R$) represented by $\bigstar$. Monomials in $\overline{m}\subset S$ are represented by $\fsqpt$, while monomials in $S/\overline{m}$ are displayed as $\sqpt$. Monomials in $R\setminus\psi(S)$ are displayed with $\circ$ (and with $\greycirc$ if the monomial is in $\B{\psi(I)}$). Monomials in $\psi(S)\subset R$ are displayed as $\sqpt$ (and with $\fsqpt$ if the monomial is in $\B{\psi(I)}$).

Classically, Borel moves have been defined as those moves which move (in two variables) down and to the right. Rather than define $w$-Borel moves analagously, we will consider a $w$-Borel move as a move $m\mapsto u$ for any $u\in\overline{m}$.

\begin{definition}
    A \textit{$w$-Borel move} on a monomial $m\in S$ is an operation which sends $m$ to a monomial $u$ where $m\prec_{w} u$.
\end{definition}

While this definition lacks the explicit combinatorial description of classic Borel moves, we can state the analog of the definition of strongly stable ideals as a proposition below.

\begin{proposition}
    A monomial ideal $I$ is $w$-stable if and only if it is fixed by $w$-Borel moves.
\end{proposition}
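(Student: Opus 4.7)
The plan is to use the preceding proposition (that $m \prec_w u$ if and only if $u \in \overline{m}$) to translate the combinatorial condition ``fixed by $w$-Borel moves'' into the ideal-theoretic statement that $\overline{m} \subseteq I$ for every monomial $m \in I$ (equivalently, for every $m \in G(I)$, since the closure of $m$ already contains all monomial multiples of $m$). Once this translation is made, both implications collapse onto the decomposition $\overline{I} = \bigcup_{m \in G(I)} \overline{m}$.

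For the forward direction, I would assume $I = \overline{I}$ and pick any monomial $m \in I$ together with any $u$ satisfying $m \prec_w u$. The preceding proposition gives $u \in \overline{m}$, and monotonicity of the weighted Borel closure under inclusion of inputs yields $\overline{m} \subseteq \overline{I} = I$. Hence $u \in I$ and $I$ is fixed by $w$-Borel moves. This direction is essentially immediate from the definitions.

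For the converse, I would suppose $I$ is fixed by $w$-Borel moves and let $u \in \overline{I}$ be a monomial. Then $\psi(u) \in \B{\psi(I)}$, and by additivity of the classical Borel closure this equals $\sum_{m \in G(I)} \B{\psi(m)}$ (both sides being the smallest strongly stable ideal containing $\psi(G(I))$). Since $\psi(u)$ is a single monomial, there must exist some $m \in G(I)$ with $\psi(u) \in \B{\psi(m)}$, i.e., $\psi(m) \prec \psi(u)$ by Lemma 2.11 of \cite{francisco2011borel}. The preceding proposition then gives $u \in \overline{m}$, and because $m \in I$ the hypothesis yields $u \in I$. Combined with the trivial containment $I \subseteq \overline{I}$, this gives $I = \overline{I}$.

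There is no real obstacle here; the statement is a clean repackaging of the preceding proposition. The only ingredient beyond that proposition is the additivity $\B{T_1 \cup T_2} = \B{T_1} + \B{T_2}$ for classical Borel closure, together with the observation that $\psi^{-1}$ sends a monomial ideal in $R$ to the monomial ideal in $S$ generated by the preimages of the monomials it contains --- both of which are routine.
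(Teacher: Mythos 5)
Your proposal is correct and takes essentially the same approach as the paper's proof: translate through $\psi$, use the preceding proposition to pass between the $w$-Borel order and membership in $\overline{m}$, and in the converse direction exhibit a witness $m$ whose image is Borel-dominated by $\psi(u)$. The one place you are more careful than the paper is in justifying that witness via additivity of the classical Borel closure (Proposition 2.15 of the reference) reduced to $G(I)$; the paper simply asserts ``there must be some $m\in\psi(I)$'' at that point, so your spelled-out version fills a detail the paper leaves implicit.
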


\begin{proof}
    Assume $I\subset S$ is $w$-stable with $m\in I$ a monomial and $u\in S$ such that $u\succ_{w} m$. We need to show that $u\in I$. We have $\psi(u)\succ\psi(m)$, so $\psi(u)\in\B{\psi(I)}$. Thus, $u\in I$.

    Now, assume that $I\subset S$ is closed under $w$-Borel moves. We need to show that $I=\psi^{-1}(\B{\psi(I)})$. The forward inclusion is obvious. Let $v\in\psi^{-1}(\B{\psi(I)})$. Then $\psi(v)\in\B{\psi(I)}$. There must be some $m\in\psi(I)$ such that $\psi(v)\succ\psi(m)$. By definition, $v\succ_{w} m$. Since $I$ is closed under $w$-Borel moves, $v\in I$.
\end{proof}

\begin{proposition}[Proposition 2.12 of \cite{francisco2011borel}]\label{bg2.12}
    Every strongly stable ideal $J\subset R$ has a unique minimal set of Borel generators, $Bgens(J)$.
\end{proposition}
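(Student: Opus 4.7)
The plan is to pin down an explicit candidate for $Bgens(J)$ in terms of the Borel order $\succ$, verify that it Borel-generates $J$, and then show that it is contained in every set of Borel generators of $J$ --- establishing existence, minimality, and uniqueness in one stroke. First, I would check that $\succ$ from Definition~\ref{def:borelorder} is a bona fide partial order on monomials of $R$: transitivity follows by composing the indexwise inequalities, and antisymmetry follows because $m_1\succ m_2$ together with $m_2\succ m_1$ force both degree inequalities and all index inequalities to collapse to equalities, whence $m_1=m_2$. Throughout I will use the fact (Lemma~2.11 of \cite{francisco2011borel}) that for monomials $a,b\in R$ one has $a\succ b$ if and only if $a\in\B{b}$.

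Next, I would take as candidate
\[
Bgens(J)\;:=\;\bigl\{m\in G(J) : \text{no } m'\in G(J)\setminus\{m\} \text{ satisfies } m\succ m'\bigr\},
\]
the set of $\succ$-minimal elements of $G(J)$. To see that $\B{Bgens(J)}=J$, the inclusion $\subseteq$ is immediate since $Bgens(J)\subseteq J$ and $J$ is strongly stable. For the reverse inclusion, I would pick any $m\in G(J)$ and observe that either $m\in Bgens(J)$ or there is $m'\in G(J)\setminus\{m\}$ with $m\succ m'$, giving $m\in\B{m'}$. Because $G(J)$ is finite and $\succ$ is antisymmetric, iterating this descent terminates after finitely many steps at some $m^\ast\in Bgens(J)$, so $m\in\B{m^\ast}\subseteq\B{Bgens(J)}$.

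For uniqueness, let $B\subseteq G(J)$ satisfy $\B{B}=J$; I would show $Bgens(J)\subseteq B$. Fix $m\in Bgens(J)$. Writing $\B{B}=\sum_{b\in B}\B{b}$ as ideals (both sides are the smallest strongly stable ideal containing $B$, and sums of strongly stable monomial ideals are strongly stable), the membership $m\in J=\B{B}$ forces $m\in\B{b}$ for some $b\in B$, hence $m\succ b$ by the lemma quoted above. If $b\neq m$, then $b\in G(J)\setminus\{m\}$ together with $m\succ b$ contradicts the $\succ$-minimality of $m$ in $G(J)$; therefore $b=m$, and $m\in B$. Since every Borel generating set contains $Bgens(J)$ and $Bgens(J)$ itself Borel-generates $J$, it is the unique minimal such set.

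The main obstacle I anticipate is ensuring that $\succ$ is well-founded on the finite set $G(J)$ so that the descent in the existence step terminates, together with correctly invoking Lemma~2.11 of \cite{francisco2011borel} to translate ideal-theoretic membership ($m\in\B{b}$) into the combinatorial inequality ($m\succ b$) used at both key points. Once these translations are in hand, the remainder is routine bookkeeping on the finite poset $(G(J),\succ)$.
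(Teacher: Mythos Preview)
The paper does not actually prove this proposition; it is quoted verbatim as Proposition~2.12 of \cite{francisco2011borel} and used as a black box, so there is no argument in the present paper to compare yours against. Your proof is a correct and standard direct argument: identify $Bgens(J)$ as the set of $\succ$-minimal elements of the finite poset $(G(J),\succ)$, use finiteness and antisymmetry for the descent, and use the characterisation $a\in\B{b}\iff a\succ b$ to pass between ideal membership and the partial order.

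One small point worth tightening: in the uniqueness step you restrict to Borel generating sets $B\subseteq G(J)$. This is harmless here because the paper (following \cite{francisco2011borel}) adopts the convention that Borel generators are drawn from $G(J)$, but if you want the statement for arbitrary monomial sets $B$ with $\B{B}=J$, you would need the extra observation that any inclusion-minimal such $B$ already lies in $G(J)$ (which follows since $g\mid t$ with $g\neq t$ implies $t\succ g$ strictly, so a non-minimal-generator $t\in B$ is redundant). With that caveat noted, your argument is complete.
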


Strongly stable ideals can be described completely by their Borel generators. We have a natural generalization for $w$-stable ideals.

\begin{definition}
    Let $I\subset S$ be a $w$-stable ideal. Then we define the \textit{weighted Borel generators} of $I$ with respect to $w$ as the set
    \begin{equation*}
        Bgens_w(I) = \psi^{-1}(Bgens(\B{\psi(I)}))
    \end{equation*}
\end{definition}

Since every $w$-stable ideal $I\subset S$ corresponds to the strongly stable ideal $\B{\psi(I)}\subset R$, Proposition \ref{bg2.12} gives us the following.

\begin{proposition}
    For a fixed weight vector $w$, every $w$-stable ideal $I$ has a unique minimal set of weighted Borel generators $Bgens_w(I)$.
\end{proposition}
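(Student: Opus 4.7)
The plan is to deduce the proposition from Proposition \ref{bg2.12} applied to $J := \B{\psi(I)}$, transferring its unique minimal Borel generating set back to $S$ via $\psi^{-1}$. Setting $Bgens_w(I) := \psi^{-1}(Bgens(J))$, three things need verification: (a) every element of $Bgens(J)$ actually lies in $\psi(S)$, so that the preimage is defined on it; (b) the resulting subset of $S$ is a $w$-Borel generating set of $I$; (c) it is the unique minimal such subset. Only (a) is substantive.

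For (b) and (c) I would use the definition $\overline{A} = \psi^{-1}(\B{\psi(A)})$ together with injectivity of $\psi$ on monomials (which follows from $\psi(x_1^{a_1}\cdots x_n^{a_n}) = y_1^{w_1 a_1}\cdots y_n^{w_n a_n}$). Concretely, $\overline{Bgens_w(I)} = \psi^{-1}(\B{Bgens(J)}) = \psi^{-1}(J) = I$ gives (b). For (c), any alternative minimal $w$-Borel generating set $T'$ of $I$ pushes forward under $\psi$ to a Borel generating set of $J$ (the $w$-stability of $I$ ensures $\B{\psi(T')} = J$), and minimality forces $\psi(T') = Bgens(J)$ by Proposition \ref{bg2.12}, whence $T' = Bgens_w(I)$.

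For (a) I would prove the stronger assertion $Bgens(J) \subseteq \psi(G(I))$ in three steps: (i) $\psi(G(I))$ Borel generates $J$ by construction; (ii) the refinement $T := \psi(G(I)) \cap G(J)$ also Borel generates $J$; (iii) $Bgens(J)$ is contained in any Borel generating subset $T'' \subseteq G(J)$, because each $m \in Bgens(J)$ is a minimal generator of $J$ lying in the same-degree Borel orbit of some $t \in T''$, and if $t \neq m$ then $m \succ t$ strictly with $t \in G(J)$, contradicting the Bgen condition on $m$. Combining (i)--(iii) gives $Bgens(J) \subseteq T \subseteq \psi(G(I)) \subseteq \psi(S)$.

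The main obstacle is the inductive step in (ii), done by induction on $\deg t$. If $t = \psi(v) \in \psi(G(I))$ is not in $G(J)$, then some $t'' \in G(J)$ properly divides $t$ with $\deg t'' < \deg t$; $w$-stability of $I$ rules out $t'' \in \psi(S)$ (else $\psi^{-1}(t'')$ would properly divide $v$ inside $I$, contradicting $v \in G(I)$). Being a minimal generator of $J = \B{\psi(G(I))}$, $t''$ lies in the same-degree Borel orbit of some $\psi(v') \in \psi(G(I))$, and $t'' \neq \psi(v')$ (since $t'' \notin \psi(S)$) forces $t'' \succ \psi(v')$ strictly. Since $\deg \psi(v') = \deg t'' < \deg t$, the inductive hypothesis places $\psi(v') \in \B{T}$, so $t'' \in \B{\{\psi(v')\}} \subseteq \B{T}$ and hence $t \in (t'') \subseteq \B{T}$. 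This essential use of $w$-stability to force $t'' \notin \psi(S)$ is precisely what guarantees that $Bgens_w(I)$ is well defined as a subset of $S$ in the first place.
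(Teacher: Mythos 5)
Your identification of the real gap is on target. The paper's one-sentence justification---``Since every $w$-stable ideal $I$ corresponds to $\B{\psi(I)}$, Proposition \ref{bg2.12} gives us the following''---glosses over exactly the point you isolate as (a): the formula $Bgens_w(I) = \psi^{-1}(Bgens(\B{\psi(I)}))$ only does what it is supposed to if $Bgens(\B{\psi(I)}) \subseteq \psi(S)$, so that $\psi^{-1}$ does not silently discard generators. Your verifications of (b) and (c) (using injectivity of $\psi$ on monomials, $\psi^{-1}(J)=I$, and the push-forward of a $w$-Borel generating set to a Borel generating set of $J$) are correct and are essentially what the paper is implicitly invoking.

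Where the proposal goes wrong is in how much work (a) requires and in what drives it. Proposition \ref{bg2.12} asserts that $Bgens(J)$ is the \emph{unique minimal} Borel generating set; this already implies $Bgens(J) \subseteq T$ for \emph{every} Borel generating set $T$ of $J$ (pass from $T$ to a minimal Borel generating subset $T_0 \subseteq T$---possible since $J$ is finitely generated---and uniqueness forces $T_0 = Bgens(J)$). Since $J = \B{\psi(I)} = \B{\psi(G(I))}$, the set $\psi(G(I))$ Borel generates $J$, so $Bgens(J) \subseteq \psi(G(I)) \subseteq \psi(S)$ follows in one line. Your steps (ii) and (iii), and in particular the intersection with $G(J)$ and the degree induction, re-derive this special case of Proposition \ref{bg2.12} from scratch.

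The claim that ``$w$-stability $\dots$ is precisely what guarantees that $Bgens_w(I)$ is well defined'' is incorrect. In your own inductive step, ruling out $t'' \in \psi(S)$ is not load-bearing: if $t'' = \psi(v')$ with $v' \in G(I)$, then $t'' \in \psi(G(I)) \cap G(J) = T$ directly, and $t \in (t'') \subseteq \B{T}$ still follows. More to the point, the inclusion $Bgens(\B{\psi(I)}) \subseteq \psi(G(I))$ holds for \emph{any} monomial ideal $I$, $w$-stable or not, by the one-line argument above. Where $w$-stability genuinely enters is only in (b) and (c), through the identity $\psi^{-1}(J) = I$.
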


\begin{proposition}
    If $m,u\in S$ are monomials such that $m\prec u$, then $m\prec_w u$.
\end{proposition}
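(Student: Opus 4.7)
The plan is to reformulate both sides of the implication as partial-sum conditions on the exponent vectors of $m$ and $u$, and then invoke Abel summation together with the monotonicity of $w$. Writing $m = x_1^{\alpha_1}\cdots x_n^{\alpha_n}$ and $u = x_1^{\beta_1}\cdots x_n^{\beta_n}$, the hypothesis $m \prec u$ (interpreting Definition~\ref{def:borelorder} in the ring $S$) is equivalent to the partial-sum dominance $\sum_{t=1}^{\ell}\beta_t \geq \sum_{t=1}^{\ell}\alpha_t$ for every $\ell \leq n$. Applying $\psi$ replaces each exponent $\alpha_t$ by $w_t\alpha_t$ (and similarly for $\beta$), so the conclusion $m \prec_w u$, unpacked via $\psi(m) \prec \psi(u)$, becomes the weighted analogue $\sum_{t=1}^{\ell} w_t\beta_t \geq \sum_{t=1}^{\ell} w_t\alpha_t$ for every $\ell$.

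Setting $\gamma_t := \beta_t - \alpha_t$ and $G_\ell := \sum_{t=1}^{\ell}\gamma_t$, the hypothesis reads $G_\ell \geq 0$ for all $\ell$, and the goal becomes $\sum_{t=1}^{\ell} w_t\gamma_t \geq 0$. Abel summation yields $\sum_{t=1}^{\ell} w_t\gamma_t = w_\ell G_\ell + \sum_{t=1}^{\ell-1}(w_t - w_{t+1})G_t$. Since $w$ is positive and monotone decreasing, both $w_\ell > 0$ and $w_t - w_{t+1} \geq 0$ hold for every $t$, so every term on the right is nonnegative, completing the argument.

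The only nontrivial step is the reformulation of Definition~\ref{def:borelorder} in terms of partial sums. I would establish this by noting that $\sum_{t=1}^{\ell}\alpha_t$ equals the number of positions in the non-decreasing factorization of $m$ that hold an index at most $\ell$; the pointwise condition $j_k \leq i_k$ (together with $\deg u \geq \deg m$) then translates directly into the desired inequality of partial sums, and conversely. Once that translation is in place the remaining obstacle is purely algebraic and is dissolved by a single Abel summation, which uses nothing beyond the defining monotonicity of the weight vector.
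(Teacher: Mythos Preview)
Your proof is correct, and it takes a genuinely different route from the paper's argument. The paper works directly with the factored forms $m=x_{i_1}\cdots x_{i_s}$ and $u=x_{j_1}\cdots x_{j_r}$: applying $\psi$ turns each $x_{i_k}$ into a block of $w_{i_k}$ copies of $y_{i_k}$, and since the pointwise condition on indices forces $w_{j_k}\geq w_{i_k}$ (weights are monotone), the blocks of $\psi(u)$ are at least as long as the corresponding blocks of $\psi(m)$, so the position-by-position comparison in Definition~\ref{def:borelorder} transfers to $\psi(m)$ and $\psi(u)$. Your argument instead passes to exponent vectors, recasts the Borel order as partial-sum dominance $\sum_{t\le\ell}\beta_t\ge\sum_{t\le\ell}\alpha_t$, and then uses Abel summation to convert this into the weighted partial-sum inequalities. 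The paper's approach is shorter and keeps everything at the level of factored words, but leaves the final ``thus $\psi(m)\prec\psi(u)$'' somewhat implicit; your reformulation isolates exactly where monotonicity of $w$ enters (the nonnegativity of $w_t-w_{t+1}$ in the Abel identity) and is arguably more transparent, at the cost of first establishing the partial-sum characterization of $\prec$.
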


\begin{proof}
    We begin by writing both monomials in factored form (following notation in Definition \ref{def:borelorder})  $m=x_{i_1}\cdots x_{i_s}$, $u=x_{j_1}\cdots x_{j_r}$ and recall that $i_k\leq j_k$ for $1\leq k\leq s\leq r$. Now, consider $\psi(m),\psi(u)$ in factored form.
    \begin{alignat*}{4}
        &\hspace{1cm}w_{i_1} && \hspace{1cm}w_{i_2} && \hspace{1cm}w_{i_s} &&\\
        \psi(m) = &(\overbrace{y_{i_1}y_{i_1}\cdots y_{i_1}})&&(\overbrace{y_{i_2}y_{i_2}\cdots y_{i_2}})\cdots &&(\overbrace{y_{i_s}y_{i_s}\cdots y_{i_s}}) &&\\
        \psi(u) = &(\underbrace{y_{j_1}y_{j_1}\cdots y_{j_1}})&&(\underbrace{y_{j_2}y_{j_2}\cdots y_{j_2}})\cdots&&(\underbrace{y_{j_s}y_{j_s}\cdots y_{j_s}})\cdots &&(\underbrace{y_{j_r}y_{j_r}\cdots y_{j_r}})\\
        &\hspace{1cm}w_{j_1} && \hspace{1cm}w_{j_2} && \hspace{1cm}w_{j_s} &&\hspace{1cm}w_{j_r} \\
    \end{alignat*}
    Since $i_k\leq j_k$, we know that $w_{i_k}\geq w_{j_k}$. Thus, $\psi(m)\prec \psi(u)$, so $m\prec_w u$.
\end{proof}

\begin{proposition}
    If $I$ is $w$-stable for some weight vector $w$, then $Bgens_w(I)\subseteq Bgens(I)$.
\end{proposition}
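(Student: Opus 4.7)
The plan is to combine the characterization $m \in Bgens(J)$ iff $m \in G(J)$ and no other minimal generator $m' \in G(J)\setminus\{m\}$ satisfies $m \succ m'$, together with two basic facts used throughout: $\psi$ is injective on monomials (since each $w_i > 0$), and divisibility $u \mid v$ implies $v \succ u$ in the Borel order (immediate from Definition \ref{def:borelorder}, comparing the sorted index sequences). By the proposition showing $w$-stability implies strong stability, $Bgens(I)$ is defined.

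Fix $m \in Bgens_w(I)$. By definition $\psi(m) \in Bgens(\B{\psi(I)})$, and $m \in \overline{I} = I$ since $I$ is $w$-stable. First I would check $m \in G(I)$: if some $m' \in I$ properly divided $m$, then $\psi(m') \in \psi(I) \subseteq \B{\psi(I)}$ would properly divide $\psi(m)$, contradicting $\psi(m) \in G(\B{\psi(I)})$.

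The crux is showing $m$ does not precede any other minimal generator of $I$. Suppose for contradiction that $m \succ m'$ for some $m' \in G(I)\setminus\{m\}$. The preceding proposition gives $m \succ_w m'$, i.e.\ $\psi(m) \succ \psi(m')$. Since $\psi(m') \in \B{\psi(I)}$, choose $g \in G(\B{\psi(I)})$ with $g \mid \psi(m')$; then $\psi(m') \succ g$, and transitivity of $\succ$ (from the pointwise inequalities in Definition \ref{def:borelorder}) yields $\psi(m) \succ g$. Next check $g \neq \psi(m)$: equality would force $\psi(m) \mid \psi(m')$ while $\psi(m) \succ \psi(m')$ gives $|\psi(m)| \geq |\psi(m')|$, whence $\psi(m) = \psi(m')$ and, by injectivity of $\psi$, $m = m'$, a contradiction. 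Thus $g$ is a second element of $G(\B{\psi(I)})$ preceded by $\psi(m)$, contradicting $\psi(m) \in Bgens(\B{\psi(I)})$.

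The only genuinely delicate point is separating the witness $g$ from $\psi(m)$ itself; this is where injectivity of $\psi$, and hence positivity of the weights, enters in an essential way. Everything else reduces to routine bookkeeping with the Borel order.
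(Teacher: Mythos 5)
Your proof is correct and rests on the same essential ingredients as the paper's: the characterization of Borel generators, the lemma that $m\prec u$ implies $m\prec_w u$, and injectivity of $\psi$. The paper's argument (suppose $b\in Bgens_w(I)\setminus Bgens(I)$, produce $m\in Bgens(I)$ with $m\prec b$, push forward to $\psi(m)\prec\psi(b)$, conclude $\psi(b)\notin Bgens(\B{\psi(I)})$) is just a terser rearrangement of the same idea; the witness-separation and degree bookkeeping you spell out with $g$ is precisely what the paper's final "so $b\notin Bgens_w(I)$" leaves implicit.
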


\begin{proof}
    Suppose that $Bgens_w(I)$ contains some $b\in Bgens_w(I)\setminus Bgens(I)$. Then there exists some $m\in Bgens(I)$ such that $m\prec b$. Thus $\psi(m)\prec \psi(b)$ so $b\notin Bgens_w(I)$.
\end{proof}

Proposition 5.3 of \cite{ireland2023bijection} gives an algorithm for computing Borel generators which can easily be extended to compute weighted Borel generators. That algorithm is implemented in wStableIdeals as the borelGens method.

\begin{example}
Let $I=(x_1^2,x_1x_2^2,x_2^4)\subset S=\mathbb{K}[x_1,x_2]$ with weight vector $w=(2,1)$.\\
{\footnotesize
\begin{verbatim}

i7 : borelGens(ideal(x_1^2,x_1*x_2^2,x_2^4),Weights=>{2,1})

       4
o7 = {x }
       2

o7 : List


\end{verbatim}}
\end{example}

Many computations on strongly stable ideals can be reduced to computations on \textit{principal} strongly stable ideals as seen in the following three propositions from \cite{francisco2011borel}.

\begin{proposition}[Proposition 2.15 of \cite{francisco2011borel}]\label{bg2.15}
    Let $J_1=\B{T_1}$ and $J_2=\B{T_2}$ for sets of monomials $T_1,T_2\subset R$. Then
    \begin{equation*}
        J_1+J_2=\B{T_1\cup T_2}.
    \end{equation*}
\end{proposition}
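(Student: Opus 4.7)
The plan is to prove the identity by double inclusion, using only the fact that $\B{T}$ is characterized as the \emph{smallest} strongly stable ideal containing $T$.

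First I would establish the easier direction $J_1 + J_2 \subseteq \B{T_1 \cup T_2}$ via monotonicity of the Borel closure. Since $T_1 \subseteq T_1 \cup T_2$, the ideal $\B{T_1 \cup T_2}$ is a strongly stable ideal containing $T_1$, so by minimality of $\B{T_1}$ we get $J_1 = \B{T_1} \subseteq \B{T_1 \cup T_2}$. The same argument gives $J_2 \subseteq \B{T_1 \cup T_2}$. Because $\B{T_1 \cup T_2}$ is an ideal, it is closed under sums, so $J_1 + J_2 \subseteq \B{T_1 \cup T_2}$.

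For the reverse inclusion $\B{T_1 \cup T_2} \subseteq J_1 + J_2$, the key observation is that the sum of two strongly stable monomial ideals is again strongly stable. This follows because a monomial $m$ lies in $J_1 + J_2$ iff it lies in $J_1 \cup J_2$ (these are monomial ideals), and if $u$ is obtained from $m$ by a Borel move, then $u$ lies in whichever of $J_1$ or $J_2$ contains $m$, hence in $J_1 + J_2$. Since $T_1 \cup T_2 \subseteq J_1 \cup J_2 \subseteq J_1 + J_2$ and $J_1 + J_2$ is strongly stable, minimality of the Borel closure forces $\B{T_1 \cup T_2} \subseteq J_1 + J_2$.

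There is no real obstacle here; the only subtle point worth flagging is the observation that membership in the sum of two monomial ideals reduces, on monomials, to membership in the union of the two generating sets. Once that is noted, the proof is essentially a two-line application of the universal property defining $\B{-}$.
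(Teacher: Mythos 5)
Your proof is correct. Note that the paper under review does not actually prove this proposition: it is quoted directly from Francisco--Mermin--Schweig (their Proposition 2.15) without reproduction of the argument, so there is no in-paper proof to compare against. Your double-inclusion argument via the universal property of the Borel closure is the natural one and matches what one would expect in the cited source: the forward inclusion follows from monotonicity of $\B{-}$ plus closure of ideals under sums, and the reverse inclusion hinges on the observation that the sum of two strongly stable monomial ideals is again strongly stable (equivalently, on monomials, membership in $J_1 + J_2$ reduces to membership in $J_1 \cup J_2$). That last observation is the only nontrivial point, and you state and justify it correctly.
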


\begin{definition}
    Let $u=y_{i_1}\cdots y_{i_r}$ and $v=y_{j_1}\cdots y_{j_s}$ be two monomials in factored form in $R$ with $s\leq r$. Put $l_t=min(i_t,j_t)$ with $l_t=s_t$ when $s<t\leq r$. Then $\mu=y_{t_1}\cdots y_{t_r}$ is the \textit{meet of $u$ and $v$ in the Borel order}, denoted $meet_{Borel}(u,v)=\mu$.
\end{definition}

\begin{proposition}[Proposition 2.16 of \cite{francisco2011borel}]\label{bg2.16}
    Let $u=y_{i_1}\cdots y_{i_r}$ and $v=y_{j_1}\cdots y_{j_s}$ with $r\geq s$. Set $k_q=min(i_q,j_q)$ with $k_q=i_q$ if $s< q\leq r$. Then set $t=y_{k_1}\cdots y_{k_r}$ be the meet of $u$ and $v$ in the Borel order. Then $\B{u}\cap \B{v}=\B{t}$.
\end{proposition}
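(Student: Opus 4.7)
The plan is to prove both inclusions by exploiting the translation between Borel closure and Borel order recorded in Lemma~2.11 of \cite{francisco2011borel} (used implicitly in the proof of the proposition characterizing $w$-Borel order above): for monomials $a,b\in R$, one has $a\in\B{b}$ if and only if $a\succ b$. With this in hand, both containments reduce to straightforward inequalities between subscripts.

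For $\B{t}\subseteq\B{u}\cap\B{v}$, I would first note that the intersection of two strongly stable ideals is again strongly stable, so it suffices to check $t\in\B{u}$ and $t\in\B{v}$. The monomial $t$ has the same length $r$ as $u$, and by construction $k_q=\min(i_q,j_q)\leq i_q$ for $q\leq s$ while $k_q=i_q$ for $s<q\leq r$; hence $k_q\leq i_q$ for every $q\leq r$, which is precisely $t\succ u$. Similarly $t$ has length $r\geq s$ and $k_q=\min(i_q,j_q)\leq j_q$ for $q\leq s$, giving $t\succ v$. Invoking Lemma~2.11 twice yields $t\in\B{u}\cap\B{v}$, and since $\B{t}$ is the smallest strongly stable ideal containing $t$, the inclusion follows.

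For the reverse inclusion, I would take $w\in\B{u}\cap\B{v}$, write it in factored form $w=y_{l_1}\cdots y_{l_p}$, and combine the two containments. From $w\in\B{u}$ we get $p\geq r$ together with $l_q\leq i_q$ for all $q\leq r$; from $w\in\B{v}$ we get $l_q\leq j_q$ for all $q\leq s$. Combining, $l_q\leq\min(i_q,j_q)=k_q$ for $q\leq s$ and $l_q\leq i_q=k_q$ for $s<q\leq r$, so $w\succ t$ and therefore $w\in\B{t}$.

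The substantive input is the equivalence $a\in\B{b}\iff a\succ b$, which is imported from \cite{francisco2011borel}; the remainder is elementary index bookkeeping. The only point that requires care is handling the tail positions $s<q\leq r$ correctly, since the two membership conditions contribute asymmetrically there: only the bound from $\B{u}$ is available, which dovetails precisely with the asymmetric definition of $t$ that inherits its tail from $u$.
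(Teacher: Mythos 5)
Your proof is correct. Note that the paper itself does not prove this statement; it is quoted verbatim as Proposition 2.16 of \cite{francisco2011borel} and used as a black box, so there is no internal proof to compare against. Your argument — reducing both containments to the characterization $a\in\B{b}\iff a\succ b$ (Lemma 2.11 of \cite{francisco2011borel}, the same lemma this paper invokes in its proof of the $w$-Borel-order proposition) plus index bookkeeping — is exactly the standard route, and it is carried out carefully, including the asymmetric handling of the tail indices $s<q\leq r$. One small point you take for granted but which is worth a sentence: for $t=y_{k_1}\cdots y_{k_r}$ to be a legitimate factored form (so that the Borel-order criterion applies at all), the sequence $k_q$ must be nondecreasing; this does hold, since $k_q=\min(i_q,j_q)$ on $q\leq s$ and $k_q=i_q$ on $q>s$ are both nondecreasing and the splice at $q=s$ gives $k_s=\min(i_s,j_s)\leq i_s\leq i_{s+1}=k_{s+1}$, but it deserves to be noted rather than assumed.
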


\begin{proposition}[Proposition 2.17 of \cite{francisco2011borel}]\label{bg2.17}
    For monomials $u,v\in R$, $\B{u}\cdot\B{v}=\B{uv}$.
\end{proposition}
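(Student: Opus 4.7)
The plan is to establish the equality by proving both inclusions separately, with the main technical input being a compatibility lemma between multiplication of monomials and the Borel order.

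First I would prove the following compatibility lemma: if $m_1\succ m_2$ and $n$ is any monomial in $R$, then $m_1 n\succ m_2 n$. Applying this twice yields $u'v'\succ uv'\succ uv$ whenever $u'\succ u$ and $v'\succ v$, which immediately gives the forward inclusion $\B{u}\cdot\B{v}\subseteq\B{uv}$: any monomial of $\B{u}\cdot\B{v}$ is a multiple of a product $u'v'$ with $u'\in\B{u}$ and $v'\in\B{v}$, so by the criterion $m'\succ m\Leftrightarrow m'\in\B{m}$ (Lemma 2.11 of \cite{francisco2011borel}) the product $u'v'$ lies in $\B{uv}$, and its multiples lie there too since $\B{uv}$ is an ideal.

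To prove the compatibility lemma, write $m_1, m_2, n$ in sorted factored form with index sequences $(a_1\leq\cdots\leq a_r)$, $(b_1\leq\cdots\leq b_s)$, and $(c_1\leq\cdots\leq c_p)$, where $r\geq s$ and $a_k\leq b_k$ for $k\leq s$ by hypothesis. The sorted factored forms of $m_1 n$ and $m_2 n$ are the sorted merges of $(a_k)$ with $(c_k)$ and of $(b_k)$ with $(c_k)$ respectively. Componentwise domination of these merged sequences is equivalent, via a threshold argument, to the cardinality inequality
\[
\#\{k : a_k\leq t\}+\#\{k : c_k\leq t\}\geq\#\{k : b_k\leq t\}+\#\{k : c_k\leq t\}
\]
for every integer $t$, which cancels to $\#\{k : a_k\leq t\}\geq\#\{k : b_k\leq t\}$ and follows at once from $a_k\leq b_k$.

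For the reverse inclusion $\B{uv}\subseteq\B{u}\cdot\B{v}$, I would verify that $\B{u}\cdot\B{v}$ is itself strongly stable. Given a monomial $m\in\B{u}\cdot\B{v}$, factor $m=p\cdot m'\cdot m''$ with $m'\in G(\B{u})$, $m''\in G(\B{v})$, and $p$ a monomial. For a Borel move $m\mapsto m\cdot y_i/y_j$ with $y_j\vert m$ and $i<j$, the variable $y_j$ divides at least one of $p$, $m'$, or $m''$, and applying the move to that factor (using strong stability of $\B{u}$ or $\B{v}$ when appropriate) keeps the result in $\B{u}\cdot\B{v}$. Since $\B{u}\cdot\B{v}$ is then a strongly stable ideal containing $uv$ and $\B{uv}$ is by definition the smallest such, the inclusion follows. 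The main obstacle is the compatibility lemma itself: a direct induction on Borel moves is awkward because the Borel order allows monomials of different degrees, and the threshold-counting reformulation handles this bookkeeping cleanly in a single step.
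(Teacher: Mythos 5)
The paper states this result as Proposition 2.17 of \cite{francisco2011borel} and does not supply its own proof, so there is no in-paper argument to compare against; you have essentially reconstructed a proof of the cited fact. Your argument is correct. The heart of it is the compatibility lemma (multiplication by a fixed monomial is monotone for $\succ$), and the threshold-counting reformulation is a clean way to get it: for sorted index sequences, componentwise domination on the shorter length is equivalent to the count inequality $\#\{k : a_k\leq t\}\geq\#\{k : b_k\leq t\}$ for all $t$, and merging both sides with a common sequence $(c_k)$ simply adds the same quantity to each side, so the inequality persists. Combined with transitivity of $\succ$ and the characterization $u'\in\B{u}\iff u'\succ u$, this gives $\B{u}\cdot\B{v}\subseteq\B{uv}$. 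For the reverse inclusion, your verification that $\B{u}\cdot\B{v}$ is strongly stable is sound: a monomial of the product ideal factors as $p\,m'm''$ with $m'\in G(\B{u})$, $m''\in G(\B{v})$, and a Borel move can be absorbed into whichever factor $y_j$ divides; note that after the move the modified factor lies in $\B{u}$ or $\B{v}$ but need not be a minimal generator, which is harmless since the product is still visibly in the ideal. Then minimality of $\B{uv}$ among strongly stable ideals containing $uv$ closes the argument. No gaps.
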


We can adapt each of the above propositions to the $w$-stable case, showing that computations on $w$-stable ideals can be reduced to computations on principal $w$-stable ideals.

\begin{proposition}\label{prop:basic1}
    Let $I_1=\overline{A_1}$ and $I_2=\overline{A_2}$ for sets of monomials $A_1,A_2\subset S$. Then $I_1+I_2=\overline{A_1\cup A_2}$.
\end{proposition}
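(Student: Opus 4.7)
The plan is to translate the statement into an assertion about strongly stable ideals in $R$ via the homomorphism $\psi$, apply Proposition \ref{bg2.15} of Francisco--Mermin--Schweig, and then translate back. Unfolding definitions, we have
\begin{equation*}
\overline{A_1\cup A_2}=\psi^{-1}(\B{\psi(A_1\cup A_2)})=\psi^{-1}(\B{\psi(A_1)\cup\psi(A_2)}),
\end{equation*}
and by Proposition \ref{bg2.15}, $\B{\psi(A_1)\cup\psi(A_2)}=\B{\psi(A_1)}+\B{\psi(A_2)}$. So the proposition reduces to verifying the identity
\begin{equation*}
\psi^{-1}(J_1+J_2)=\psi^{-1}(J_1)+\psi^{-1}(J_2)
\end{equation*}
for the monomial ideals $J_i=\B{\psi(A_i)}$ of $R$; the right-hand side of this is exactly $I_1+I_2$.

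The forward inclusion of that identity is immediate: if $f\in\psi^{-1}(J_1)$ and $g\in\psi^{-1}(J_2)$, then $\psi(f+g)=\psi(f)+\psi(g)\in J_1+J_2$. For the reverse inclusion, I would first observe that $\psi^{-1}(J)$ is itself a monomial ideal for any monomial ideal $J\subset R$: since every weight $w_i$ is positive, $\psi$ sends distinct monomials of $S$ to distinct monomials of $R$, so a polynomial $f=\sum c_\alpha x^\alpha$ satisfies $\psi(f)\in J$ if and only if each $\psi(x^\alpha)\in J$. Hence it suffices to check the reverse inclusion on monomials. For a monomial $x^\alpha\in\psi^{-1}(J_1+J_2)$, the monomial $\psi(x^\alpha)$ lies in the monomial ideal $J_1+J_2$, so it must lie in $J_1$ or in $J_2$, giving $x^\alpha\in\psi^{-1}(J_1)\cup\psi^{-1}(J_2)\subseteq\psi^{-1}(J_1)+\psi^{-1}(J_2)$.

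Chaining these steps yields
\begin{equation*}
\overline{A_1\cup A_2}=\psi^{-1}(\B{\psi(A_1)}+\B{\psi(A_2)})=\psi^{-1}(\B{\psi(A_1)})+\psi^{-1}(\B{\psi(A_2)})=I_1+I_2.
\end{equation*}
The only genuinely content-bearing step is the lemma that $\psi^{-1}$ commutes with finite sums of monomial ideals; the rest is bookkeeping with the definition of weighted Borel closure. The main obstacle to anticipate is justifying that $\psi^{-1}(J)$ is a monomial ideal despite $\psi$ being non-surjective, which hinges on the observation that $\psi$ is injective on monomials because $w\in\mathbb{N}^n_{>0}$.
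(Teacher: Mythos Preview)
Your proof is correct and follows essentially the same route as the paper: unfold the definition of $\overline{\,\cdot\,}$, invoke Proposition~\ref{bg2.15}, and use that $\psi^{-1}$ commutes with sums of monomial ideals. The paper simply asserts $\psi^{-1}(J_1)+\psi^{-1}(J_2)=\psi^{-1}(J_1+J_2)$ without comment, whereas you supply the justification via injectivity of $\psi$ on monomials; otherwise the arguments are identical.
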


\begin{proof}
    We begin by noticing that $I_1=\psi^{-1}(\B{\psi(A_1)})$ and $I_2=\psi^{-1}(\B{\psi(A_2)})$. Now,
    \begin{align*}
        I_1+I_2&=\psi^{-1}(\B{\psi(A_1)})+\psi^{-1}(\B{\psi(A_2)})\\
        &=\psi^{-1}(\B{\psi(A_1)}+\B{\psi(A_2)})\\
        &=\psi^{-1}(\B{\psi(A_1)\cup\psi(A_2)})\hspace{3cm}\text{(Proposition \ref{bg2.15})}\\
        &=\psi^{-1}(\B{\psi(A_1+A_2)})\\
        &=\overline{A_1+A_2}.
    \end{align*}
\end{proof}

We can consider the \textit{meet} of $u$ and $v$ in the $w$-Borel order which we will denote $meet_w(u,v):=\psi^{-1}(meet_{Borel}(\psi(u),\psi(v))$.

\begin{proposition}\label{prop:basic2}
    Let $u,v\in S$ be monomials with $q=meet_w(u,v)$. Then $\overline{u}\cap\overline{v}=\overline{q}$.
\end{proposition}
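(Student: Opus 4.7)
The plan is to mirror the proof of Proposition \ref{prop:basic1}, pushing the statement up to $R$ via $\psi$, applying the corresponding classical result from \cite{francisco2011borel} (Proposition \ref{bg2.16}), and then pulling back via $\psi^{-1}$. The first step is to unpack the definitions: $\overline{u} = \psi^{-1}(\B{\psi(u)})$, $\overline{v} = \psi^{-1}(\B{\psi(v)})$, and $\psi(q) = meet_{Borel}(\psi(u),\psi(v))$ by the definition of $meet_w$.

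The key observation is that the preimage of sets under any map commutes with intersection, and in our monomial setting this passes cleanly to monomial ideals, since $\psi^{-1}(J)$ consists of exactly the monomials $m \in S$ with $\psi(m) \in J$. Chaining equalities then gives
\begin{align*}
    \overline{u}\cap\overline{v} &= \psi^{-1}(\B{\psi(u)})\cap\psi^{-1}(\B{\psi(v)})\\
    &= \psi^{-1}\bigl(\B{\psi(u)}\cap\B{\psi(v)}\bigr)\\
    &= \psi^{-1}\bigl(\B{meet_{Borel}(\psi(u),\psi(v))}\bigr) \hspace{1cm}\text{(Proposition \ref{bg2.16})}\\
    &= \psi^{-1}(\B{\psi(q)})\\
    &= \overline{q}.
\end{align*}

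The main obstacle, or at least the point that requires care, is ensuring that $q=meet_w(u,v)$ is actually a well-defined monomial in $S$: nothing in the definition forces $meet_{Borel}(\psi(u),\psi(v))$ to lie in the image of $\psi$, since the pointwise minima of indices in the factored form need not respect the multiplicities dictated by $w$. In the cases where the meet does land in $\psi(S)$ the argument above is literal, and in all other cases one reads $\overline{q}$ as the $w$-stable ideal $\psi^{-1}(\B{meet_{Borel}(\psi(u),\psi(v))})$; either way the chain of equalities above is the proof, since the intermediate step $\psi^{-1}(\B{meet_{Borel}(\psi(u),\psi(v))})$ is the object being identified with $\overline{q}$. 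No additional combinatorics beyond Proposition \ref{bg2.16} is needed.
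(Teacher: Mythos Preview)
Your proof is correct and follows essentially the same route as the paper's own proof: unpack the definitions, use that $\psi^{-1}$ commutes with intersection, apply Proposition~\ref{bg2.16}, and pull back. The paper's version is slightly terser (it skips the intermediate line $\psi^{-1}(\B{meet_{Borel}(\psi(u),\psi(v))})$), but the argument is identical; your additional remark about whether $meet_{Borel}(\psi(u),\psi(v))$ lands in $\psi(S)$ is a legitimate caveat that the paper does not address explicitly.
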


\begin{proof}
    Notice that 
    \begin{align*}
        \overline{u}\cap\overline{v} &= \psi^{-1}(\B{\psi(u)})\cap\psi^{-1}(\B{\psi(v)})\\
        &=\psi^{-1}(\B{\psi(u)}\cap\B{\psi(v)})\\
        &=\psi^{-1}(\B{\psi(q)}) \hspace{3cm}\text{(Proposition \ref{bg2.16})} \\
        &=\overline{q}.
    \end{align*}
\end{proof}

\begin{example}
    If $u=x_2^4$ and $v=x_1x_2$ in $S=\mathbb{K}[x_1,x_2]$ with $w=(2,1)$, then $\psi(u)=y_2^4$ and $\psi(v)=y_1^2y_2$. We have $meet(\psi(u),\psi(v))=y_1^2y_2^2$, so $meet(u,v)=x_1x_2^2$ in the $w$-Borel order.
\end{example}

\begin{proposition}\label{prop:basic3}
    For monomials $u,v\in S$, $\overline{u}\cdot\overline{v}=\overline{uv}$.
\end{proposition}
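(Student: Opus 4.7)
My plan is to mirror the proofs of Propositions \ref{prop:basic1} and \ref{prop:basic2}: unfold each $w$-Borel closure via the defining formula $\overline{m}=\psi^{-1}(\B{\psi(m)})$, push the ideal product inside $\psi^{-1}$, apply Proposition \ref{bg2.17} in $R$ to combine the two principal Borel closures, and finally use that $\psi$ is a ring homomorphism with $\psi(uv)=\psi(u)\psi(v)$ to recover $\overline{uv}$.

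Concretely, I would write the chain
\begin{align*}
\overline{u}\cdot\overline{v}
  &= \psi^{-1}(\B{\psi(u)})\cdot\psi^{-1}(\B{\psi(v)}) \\
  &= \psi^{-1}\bigl(\B{\psi(u)}\cdot\B{\psi(v)}\bigr) \\
  &= \psi^{-1}\bigl(\B{\psi(u)\psi(v)}\bigr) \qquad\text{(Proposition \ref{bg2.17})} \\
  &= \psi^{-1}(\B{\psi(uv)}) \;=\; \overline{uv}.
\end{align*}
The first, third, fourth, and fifth equalities are immediate from the definitions and from the compatibility of $\psi$ with multiplication.

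The main obstacle is the second equality: showing that $\psi^{-1}$ distributes over ideal products on the principal Borel ideals $\B{\psi(u)}$ and $\B{\psi(v)}$. The inclusion $\psi^{-1}(J_1)\cdot\psi^{-1}(J_2)\subseteq\psi^{-1}(J_1 J_2)$ is automatic from $\psi$ being a ring homomorphism, since for monomials $a\in\psi^{-1}(J_1)$ and $b\in\psi^{-1}(J_2)$ one has $\psi(ab)=\psi(a)\psi(b)\in J_1 J_2$. The reverse inclusion is the delicate part: given a monomial $m\in S$ with $\psi(m)\in\B{\psi(u)}\cdot\B{\psi(v)}$, I must produce a factorization $m=ab$ in $S$ with $\psi(a)\in\B{\psi(u)}$ and $\psi(b)\in\B{\psi(v)}$. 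A witness factorization $\psi(m)=g_1 g_2\cdot r$ exists in $R$ with $g_1,g_2$ in the respective Borel closures, but the individual pieces need not lie in $\psi(S)$.

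To descend the factorization, I would try to redistribute the exponents of each $y_i$ between the two halves so that both $\psi(a)$ and $\psi(b)$ have every $y_i$-exponent a multiple of $w_i$, i.e., lie in $\psi(S)$. The flexibility to do this should come from two features of principal Borel closures: each is absorbing under multiplication by any $y_j$, and each is closed under Borel swaps $y_j\mapsto y_i$ for $i<j$. Combined with the concrete form $\psi(u)=\prod_i y_i^{w_i e_i(u)}$, these should let me rebalance the $y_i$-powers between $g_1$ and $g_2$ without leaving either Borel closure. Carrying out this exponent-rebalancing argument carefully is the step I expect to absorb the bulk of the work, and the most likely place where a subtlety (or need for an additional hypothesis) would surface.
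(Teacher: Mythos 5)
Your outline reproduces the paper's own proof, and you are right to flag the second equality
\begin{equation*}
\psi^{-1}\bigl(\B{\psi(u)}\bigr)\cdot\psi^{-1}\bigl(\B{\psi(v)}\bigr)
= \psi^{-1}\bigl(\B{\psi(u)}\cdot\B{\psi(v)}\bigr)
\end{equation*}
as the place where something must be checked. Unlike sums and intersections of monomial ideals, preimages under a non-surjective ring map do not commute with ideal products in general, and the paper asserts this step without comment. Your instinct that ``a subtlety or need for an additional hypothesis would surface'' is correct, but the situation is worse than a subtlety: this equality, and with it the proposition as stated, is false.

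Take $n=2$, $w=(2,1)$, $u=v=x_2$. Then $\psi(x_2)=y_2$ and $\B{y_2}=(y_1,y_2)$, so
\begin{equation*}
\overline{x_2}=\psi^{-1}\bigl((y_1,y_2)\bigr)=(x_1,x_2),
\qquad
\overline{u}\cdot\overline{v}=(x_1,x_2)^2=(x_1^2,x_1x_2,x_2^2).
\end{equation*}
On the other hand $\psi(uv)=y_2^2$, $\B{y_2^2}=(y_1^2,y_1y_2,y_2^2)$, and (exactly as in the paper's first worked example with $w=(2,1)$)
\begin{equation*}
\overline{uv}=\psi^{-1}\bigl((y_1^2,y_1y_2,y_2^2)\bigr)=(x_1,x_2^2).
\end{equation*}
Here $x_1\in\overline{uv}$ but $x_1\notin\overline{u}\cdot\overline{v}$, so the two ideals differ. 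The failure is precisely the descent step you isolated: $\psi(x_1)=y_1^2$ factors in $R$ as $y_1\cdot y_1$ with each factor in $\B{y_2}$, yet $x_1$ admits no factorization $ab$ in $S$ with $\psi(a),\psi(b)\in\B{y_2}$, so no amount of exponent rebalancing can produce one. Only the inclusion $\overline{u}\cdot\overline{v}\subseteq\overline{uv}$ survives; to salvage an equality the statement would have to be weakened to that inclusion, or restricted to weight vectors (such as $w=\mathbb{1}$) for which $\psi^{-1}$ does commute with the product of the relevant Borel ideals.
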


\begin{proof}
    We have $\overline{u}=\psi^{-1}(\B{\psi(u)})$ and similarly for $v$. Now, 
    \begin{align*}
        \overline{u}\cdot\overline{v} &= \psi^{-1}(\B{\psi(u)})\cdot\psi^{-1}(\B{\psi(v)})\\
        &= \psi^{-1}(\B{\psi(u)}\cdot\B{\psi(v)})\\
        &= \psi^{-1}(\B{(\psi(u)\cdot\psi(v))}) \hspace{3cm}\text{(Proposition \ref{bg2.17})} \\
        &= \psi^{-1}(\B{\psi(uv)})\\
        &= \overline{uv}.
    \end{align*}
\end{proof}

Given that we can reduce computations for arbitrary $w$-stable ideals to computations on principal $w$-stable ideals, we will be especially interested in principal $w$-stable ideals for the remainder of the paper.

\section{Stanley Decompositions}\label{sec:decomposition}

Following Francisco, Mermin, and Schweig in \cite{francisco2011borel}, we use truncations of strongly stable ideals to construct a Stanley decomposition. We begin this section by recalling the definition and notation of truncations and a useful lemma from \cite{francisco2011borel}.

\begin{definition}[Definition 3.10 of \cite{francisco2011borel}]
    Let $m\in R$ be a monomial with factorization $y_{i_1}\cdots y_{i_r}$ and let $d$ be a positive integer. If $d\leq deg(m)$, define the \textit{$d$-truncation of $m$}, denoted $trunc_d(m)$, to be $y_{i_1}\cdots y_{i_d}$. If $d>deg(m)$, then set $trunc_d(m)$ to be $m$ itself. For a monomial ideal $J\subset R$, define the \textit{$d$-truncation of $J$} to be the ideal $trunc_d(J)=(trunc_d(m):m\in J)$.
\end{definition}

\begin{lemma}[Lemma 3.12 of \cite{francisco2011borel}]\label{trunc_lemma}
    Let $J\subset R$ be a strongly stable ideal. Then
    \begin{equation*}
        trunc_d(J) = \B{trunc_d(m):m\in Bgens(J))}.
    \end{equation*}
\end{lemma}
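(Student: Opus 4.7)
The plan is to prove both containments separately. For $\supseteq$, the key step is to verify that $trunc_d(J)$ is itself strongly stable; once this is in hand, since each $trunc_d(b)$ with $b \in Bgens(J) \subseteq J$ already lies in $trunc_d(J)$, strong stability forces $\B{\{trunc_d(b): b \in Bgens(J)\}} \subseteq trunc_d(J)$. To verify strong stability, I would take a monomial of the form $q \cdot trunc_d(n) \in trunc_d(J)$ with $n \in J$, and apply a Borel move $y_i / y_j$ with $i < j$ and $y_j$ dividing the product. The case $y_j \mid q$ is immediate, since the result is another multiple of $trunc_d(n)$. For the case $y_j \mid trunc_d(n)$, I would write $n = y_{i_1} \cdots y_{i_r}$ in sorted factored form with $y_j = y_{i_t}$ for some $t \leq d$, set $n' = n \cdot y_i / y_{i_t}$ (which lies in $J$ by strong stability of $J$), and check the monomial identity $trunc_d(n') = trunc_d(n) \cdot y_i / y_{i_t}$ by tracing how sorting responds to lowering a single index among the first $d$.

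For $\subseteq$, I would take an arbitrary generator $trunc_d(m)$ of $trunc_d(J)$ and write $m = q \cdot g$ with $g \in G(J)$; a short argument shows that every element of $G(J)$ is a same-degree Borel move of some $b \in Bgens(J)$ (otherwise the prefix $y_{i_1}\cdots y_{i_{\deg b}}$ of $g$ would divide $g$ and already lie in $J$, contradicting minimality), so $g \succ b$ with $\deg g = \deg b$. The target becomes $trunc_d(m) \succ trunc_d(b)$, after which Lemma 2.11 of \cite{francisco2011borel} gives $trunc_d(m) \in \B{trunc_d(b)}$. I would split this into two order-preservation claims: first, truncation preserves the Borel order, so $g \succ b$ implies $trunc_d(g) \succ trunc_d(b)$, since componentwise inequality on the first $\deg b$ indices is inherited by the first $d$ of them; second, multiplication by an arbitrary monomial weakly moves a monomial up in the truncated Borel order, giving $trunc_d(qg) \succ trunc_d(g)$. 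The second claim is a merge-sort observation: in the sorted merge of the index multisets of $q$ and $g$, the $k$-th smallest entry is bounded above by the $k$-th smallest index of $g$ alone, because $g$ already contributes $k$ indices that are $\leq$ its own $k$-th index.

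The main obstacle will be clean bookkeeping across the regimes $d \leq \deg b$, $\deg b < d \leq \deg m$, and $d > \deg m$, since both $trunc_d$ and the Borel comparison change behavior at these thresholds (for instance $trunc_d$ becomes the identity, and $m_1 \succ m_2$ carries the degree requirement $\deg m_1 \geq \deg m_2$). Once these boundary cases are sorted out, the two order-preservation claims combine to give $trunc_d(m) \succ trunc_d(b)$, and the two containments assemble into the claimed equality.
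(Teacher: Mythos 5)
The paper does not prove this lemma; it is quoted verbatim as Lemma~3.12 of Francisco--Mermin--Schweig, so there is no in-paper proof to compare against. Evaluating your argument on its own terms: it is correct, and both halves are sound.

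For $\supseteq$, your plan to first verify that $trunc_d(J)$ is strongly stable is exactly the right move, and the key computation --- that for $n\in J$ sorted as $y_{i_1}\cdots y_{i_r}$ with $t\leq \min(d,r)$ and $i<i_t$, the sorted index multiset of $n'=n\,y_i/y_{i_t}$ differs from that of $n$ only within the first $t$ positions, so $trunc_d(n')=trunc_d(n)\cdot y_i/y_{i_t}$ --- is the crux and it checks out. Once $trunc_d(J)$ is strongly stable and contains each $trunc_d(b)$, minimality of the Borel closure gives the inclusion.

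For $\subseteq$, your argument works but carries some avoidable weight. Since $J=\B{Bgens(J)}$ and Lemma~2.11 of \cite{francisco2011borel} gives $u\in\B{v}\iff u\succ v$, every monomial $m\in J$ already satisfies $m\succ b$ for some $b\in Bgens(J)$ directly --- there is no need to first factor $m=qg$ with $g\in G(J)$, establish $\deg g=\deg b$, or run the merge-sort argument as a separate claim. The single observation you call Claim~A (truncation is monotone for the Borel order: if $u\succ v$ then $trunc_d(u)\succ trunc_d(v)$, since $\deg trunc_d(u)=\min(d,\deg u)\geq\min(d,\deg v)=\deg trunc_d(v)$ and the componentwise inequalities restrict to the first $\min(d,\deg v)$ positions) then gives $trunc_d(m)\succ trunc_d(b)$ immediately, and Lemma~2.11 finishes. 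Your Claim~B is really the special case $qg\succ g$ of the divisibility fact, followed by Claim~A; it isn't wrong, just redundant. The bookkeeping across the degree regimes that you flag as the main obstacle largely dissolves once you phrase the monotonicity of $trunc_d$ as a single statement about $\min(d,\cdot)$, since the Borel order already compares only along the shorter prefix.
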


In the classic setting, \textit{truncations} are used to obtain a Stanley decomposition of $R/J$ where $J$ is a strongly stable ideal. We obtain a similar result for $w$-stable ideals. First, we need a quick proposition.

\begin{proposition}\label{prop:complementsequal}
    If $I$ is $w$-stable, then $S/I=\psi^{-1}(R/\B{\psi(I)})$.
\end{proposition}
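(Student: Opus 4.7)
The plan is to interpret both sides as $\mathbb{K}$-vector spaces equipped with their natural monomial bases, and show that the preimage map $\psi^{-1}$ gives a bijection between these bases. The statement should be read as follows: $S/I$ has a $\mathbb{K}$-basis consisting of monomials in $S\setminus I$, while $R/\B{\psi(I)}$ has a $\mathbb{K}$-basis consisting of monomials in $R\setminus\B{\psi(I)}$; the claim is that the preimage under $\psi$ of the latter basis (which, since $\psi$ is defined only on $S$, really means monomials $m\in S$ with $\psi(m)\in R\setminus\B{\psi(I)}$) coincides with the former.

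First I would reduce to checking a single monomial equivalence: for every monomial $m\in S$,
\begin{equation*}
    m\notin I \iff \psi(m)\notin \B{\psi(I)}.
\end{equation*}
Because $\psi$ sends distinct monomials of $S$ to distinct monomials of $R$ (the weights $w_i$ are positive, so $\psi$ is injective on the monomial monoid), this bijection of bases will imply the equality of quotient spaces.

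Next, I would prove the monomial equivalence directly from the $w$-stability hypothesis. By definition of $w$-stable, $I=\overline{I}=\psi^{-1}(\B{\psi(I)})$. If $m\notin I$, then $m\notin\psi^{-1}(\B{\psi(I)})$, which is precisely the statement $\psi(m)\notin \B{\psi(I)}$. Conversely, if $\psi(m)\notin\B{\psi(I)}$, then $m\notin\psi^{-1}(\B{\psi(I)})=I$. Both directions use only the defining identity $I=\psi^{-1}(\B{\psi(I)})$ and no combinatorial facts about Borel moves.

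There is essentially no obstacle to this argument; the only thing to be careful about is the interpretation of $\psi^{-1}(R/\B{\psi(I)})$, since $R/\B{\psi(I)}$ is not literally in the image of $\psi$ as a ring. I would include a short explanatory sentence at the outset clarifying that both quotients are identified with the $\mathbb{K}$-span of their standard monomial bases, so that $\psi^{-1}$ applied to the basis of $R/\B{\psi(I)}$ makes sense and the stated equality is an equality of $\mathbb{K}$-vector spaces (with distinguished monomial bases).
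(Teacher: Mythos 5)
Your proof is correct and takes a more direct route than the paper's. Both reduce the claim to the monomial equivalence $m\notin I \iff \psi(m)\notin\B{\psi(I)}$ for monomials $m\in S$; you observe that this is immediate from the defining identity $I=\overline{I}=\psi^{-1}(\B{\psi(I)})$ of $w$-stability together with the meaning of set-theoretic preimage. The paper instead proves one inclusion ($\psi^{-1}(R/\B{\psi(I)})\subseteq S/I$) using only $\psi(I)\subseteq\B{\psi(I)}$ and injectivity of $\psi$ on monomials, and the other inclusion by contradiction via the earlier proposition that $w$-stable ideals are closed under $w$-Borel moves: if $v\notin I$ but $\psi(v)\in\B{\psi(I)}$, then $\psi(v)\succ\psi(m)$ for some $m\in I$, hence $v\succ_w m$, and closure would force $v\in I$. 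The paper's argument thus detours through the $w$-Borel-move characterization rather than invoking $I=\psi^{-1}(\B{\psi(I)})$ directly; your version bypasses that and is logically leaner, at the cost of not re-surfacing the Borel-move picture. Your preliminary remark on reading $\psi^{-1}$ of a quotient ring as acting on the $\mathbb{K}$-span of the standard monomial basis is the interpretation the paper also tacitly adopts.
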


\begin{proof}
    If $u\in \psi^{-1}(R/\B{\psi(I)})$, then $\psi(u)\notin\B{\psi(I)}$. Clearly, $\psi(u)\notin\psi(I)$, so $u\notin I$.

    On the other hand, if $v\in S/I$, then $\psi(v)\notin\psi(I)$. Suppose, however, that $\psi(v)\in\B{\psi(I)}$. Then there exists some $m\in I$ such that $\psi(v)\succ\psi(m)$. Then $v\succ_w m$, but this contradicts $I$ being closed by $w$-Borel moves.
\end{proof}

\begin{theorem}
    Let $I\subset S$ be a $w$-stable ideal generated in degrees less than or equal to $d$. Let $G_s$ denote the degree $s$ generators of $trunc_s(\B{\psi(I)})$ which are not in $\B{\psi(I)}$. Then $S/I$ has the Stanley decomposition
    \begin{equation*}
        S/I = \bigoplus_{s=0}^{d-1} \Bigg(\bigoplus_{u\in \psi^{-1}(G_s)} u\cdot\mathbb{K}[x_j:\psi(u)y_j\notin trunc_{s+1}(\B{\psi(I)})]\Bigg).
    \end{equation*}
\end{theorem}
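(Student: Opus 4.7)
The plan is to pull back the classical Francisco--Mermin--Schweig Stanley decomposition of $R/\B{\psi(I)}$ to $S/I$ along $\psi$. That decomposition writes
\begin{equation*}
    R/\B{\psi(I)} \;=\; \bigoplus_{s=0}^{d-1} \bigoplus_{u' \in G_s} u' \cdot \mathbb{K}[y_j : u' y_j \notin trunc_{s+1}(\B{\psi(I)})],
\end{equation*}
and by Proposition \ref{prop:complementsequal} the monomial basis of $S/I$ corresponds bijectively to the monomials of $R/\B{\psi(I)}$ that lie in $\psi(S)$. So the plan reduces to intersecting each classical piece with $\psi(S)$ and pulling back along $\psi$.

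Handling the pieces with $u'\in\psi(S)$ is a direct check. For $u'=\psi(u)$ (so $u\in\psi^{-1}(G_s)$), the exponents of $u'$ are already divisible by the corresponding $w_i$'s, so a monomial $u'y^{\beta}$ in the classical piece lies in $\psi(S)$ iff each $\beta_j$ is divisible by $w_j$; writing $\beta_j = w_j\alpha_j$ identifies $u'y^{\beta}$ with $\psi(ux^{\alpha})$. Pulling back along $\psi$ gives exactly the claimed summand $u \cdot \mathbb{K}[x_j:\psi(u)y_j\notin trunc_{s+1}(\B{\psi(I)})]$ of the right-hand side.

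The main obstacle is to show that the pieces with $u'\notin\psi(S)$ contribute nothing to $S/I$. The clean reformulation of what must be shown is: if $u'\in G_s$ has some exponent $e_k$ not divisible by $w_k$, then $u'y_k\in trunc_{s+1}(\B{\psi(I)})$, so $y_k$ is disallowed for $u'$. Granted this, any putative $u'y^{\beta}\in\psi(S)$ would force $w_k \mid e_k+\beta_k$ and hence $\beta_k>0$, contradicting the disallowance of $y_k$; thus the piece meets $\psi(S)$ trivially. To establish the claim, I would combine Lemma \ref{trunc_lemma}, which writes $trunc_{s+1}(\B{\psi(I)})$ as the Borel closure of $trunc_{s+1}$-images of Borel generators of $\B{\psi(I)}$, with the $w$-stability of $I$ (ensuring those generators trace back to $\psi$-images of weighted Borel generators of $I$ together with their Borel expansions), then trace through a Borel-order argument showing that the non-divisibility $w_k\nmid e_k$ forces $u'y_k$ to be Borel-above some $trunc_{s+1}(m')$ with $m'\in Bgens(\B{\psi(I)})$.

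Once this obstacle is cleared, disjointness of the right-hand side follows from disjointness of the classical decomposition together with injectivity of $\psi$ on monomials, and covering of $S/I$ follows because every $v\in S\setminus I$ has $\psi(v)\in R\setminus\B{\psi(I)}$ lying in a unique classical piece whose label, by the resolved obstacle, must be in $\psi(S)$ and hence of the form $\psi(u)$ for $u\in\psi^{-1}(G_s)$.
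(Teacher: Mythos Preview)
Your approach is exactly the paper's: pull back the Francisco--Mermin--Schweig decomposition of $R/\B{\psi(I)}$ through $\psi$ using Proposition~\ref{prop:complementsequal}. The paper in fact glosses over precisely the obstacle you isolate, passing from $\bigoplus_{v\in G_s}$ to $\bigoplus_{u\in\psi^{-1}(G_s)}$ with only the remark that $\psi$ is an algebra homomorphism.

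Your plan for the obstacle is in the right direction but more elaborate than needed; neither $w$-stability nor the specific structure of weighted Borel generators is required. For \emph{any} strongly stable $J$ and any $u'\in G_s$, there is some $m\in J$ of degree $>s$ with $trunc_s(m)=u'$ (otherwise $u'\in J$ or $u'$ is non-minimal), so $trunc_{s+1}(m)=u'y_\ell$ with $\ell\ge\max(u')$; then for every $k\le\max(u')$ one has $u'y_k\succeq u'y_\ell\in trunc_{s+1}(J)$, so $y_k$ is disallowed. Since $w_k\nmid e_k$ forces $e_k>0$ and hence $k\le\max(u')$, your claim follows at once.
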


\begin{proof}
    We can start by getting a Stanley decomposition for $R/\B{\psi(I)}$
    \begin{equation*}
       R/\B{\psi(I)} = \bigoplus_{s=0}^{d-1} \Bigg(\bigoplus_{v\in G_s} u\cdot\mathbb{K}[y_j:vy_j\notin trunc_{s+1}(\B{\psi(I)})]\Bigg).
    \end{equation*}
    Since $S/I=\psi^{-1}(R/\B{\psi(I)})$,
    \begin{align*}
        S/I &= \psi^{-1}(R/\B{\psi(I)})\\
        &= \psi^{-1}\Bigg(\bigoplus_{s=0}^{d-1} \Bigg(\bigoplus_{v\in G_s} v\cdot\mathbb{K}[y_j:vy_j\notin trunc_{s+1}(\B{\psi(I)})]\Bigg)\Bigg)\\
        &= \bigoplus_{s=0}^{d-1} \Bigg(\bigoplus_{u\in \psi^{-1}(G_s)} u\cdot\psi^{-1}\bigg(\mathbb{K}[y_j:my_j\notin trunc_{s+1}(\B{\psi(I)})]\bigg)\Bigg)\\
        &= \bigoplus_{s=0}^{d-1} \Bigg(\bigoplus_{u\in \psi^{-1}(G_s)} u\cdot\mathbb{K}[x_j:my_j\notin trunc_{s+1}(\B{\psi(I)})]\Bigg).
    \end{align*}
The first line above is Proposition \ref{prop:complementsequal}. The second line uses the formula for Stanley decomposition of $R/\B{\psi(I)}$ found in Theorem 4.1 of \cite{francisco2011borel}. The third line follows from $\psi$ being an algebra homomorphism. To see that the fourth line follows, let $Z_y=\{y_{j_1},y_{j_2},\dots,y_{j_r}\}$ be any subset of the variables $y_1,\dots,y_n$. Then $\psi^{-1}(\mathbb{K}[Z_y])=\mathbb{K}[x_{j_1},x_{j_2},\dots,x_{j_r}]$.
\end{proof}

\begin{notation}
    Let $u$ be a monomial with factorization $u=y_{i_1}y_{i_2}\dots y_{i_k}$ and $i_1\leq i_2\leq \cdots \leq i_k$. We will use the notation $max(u)=i_k$ to denote the largest index of any variable dividing $u$.
\end{notation}

\begin{lemma}
    Let $m\in R$ be a monomial and assume that $u\in G(\B{trunc_{s}(v)})$. Then $uy_j\in\B{trunc_{s+1}(v)}$ if and only if $j\leq max(trunc_{s+1}(v))$.
\end{lemma}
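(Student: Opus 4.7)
The plan is to translate the statement into the standard characterization of principal Borel ideals and then do a case analysis on where the extra variable $y_j$ lands after re-sorting the factorization of $uy_j$.

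First I would set up notation: write the sorted factorization $v = y_{i_1}\cdots y_{i_r}$ with $i_1 \le \cdots \le i_r$, so that $trunc_s(v) = y_{i_1}\cdots y_{i_s}$ and $max(trunc_{s+1}(v)) = i_{s+1}$. Since $\B{trunc_s(v)}$ is a principal Borel ideal, its minimal generators are exactly the degree-$s$ monomials whose sorted factorization $y_{j_1}\cdots y_{j_s}$ satisfies $j_\ell \le i_\ell$ for all $\ell$ (this is just the defining componentwise inequality of the Borel order from Definition \ref{def:borelorder} applied to a single generator). So the hypothesis gives us such a factorization for $u$.

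Next I would make the key observation that turns this into a purely finite check: $uy_j$ has degree $s+1$, and every minimal generator of $\B{trunc_{s+1}(v)}$ also has degree $s+1$. Hence $uy_j \in \B{trunc_{s+1}(v)}$ if and only if $uy_j$ is itself a minimal generator, i.e., if and only if its sorted factorization $y_{k_1}\cdots y_{k_{s+1}}$ satisfies $k_\ell \le i_\ell$ for every $\ell \le s+1$.

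For the forward direction (contrapositive), assume $j > i_{s+1}$. Since $j_s \le i_s \le i_{s+1} < j$, the index $j$ is strictly larger than every index appearing in $u$, so after sorting it occupies the last slot: $k_{s+1} = j > i_{s+1}$, and $uy_j$ fails the componentwise inequality. For the reverse direction, assume $j \le i_{s+1}$, let $t$ be the largest index with $j_t \le j$ (taking $t=0$ if $j < j_1$), and insert $j$ into the sorted sequence of $u$ to get $k_\ell = j_\ell$ for $\ell \le t$, $k_{t+1} = j$, and $k_\ell = j_{\ell-1}$ for $\ell \ge t+2$. I would then verify the inequality in each range: the $\ell \le t$ case is inherited from $u$; the $\ell \ge t+2$ case follows from $j_{\ell-1} \le i_{\ell-1} \le i_\ell$ using monotonicity of the $i_\ell$; and the $\ell = t+1$ case splits into either $t < s$, where $j < j_{t+1} \le i_{t+1}$ by the choice of $t$, or $t = s$, where the hypothesis $j \le i_{s+1}$ is exactly what is needed.

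The only thing that could trip one up is correctly handling the three positions where $y_j$ can land after sorting (beginning, middle, or end), and in particular noticing that the hypothesis $j \le i_{s+1}$ is only strictly needed in the $t = s$ subcase — in every other subcase monotonicity of the $(i_\ell)$ does the work. So the main obstacle is really just careful bookkeeping of subscripts; there is no deeper obstruction.
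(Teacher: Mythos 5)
Your proof is correct and uses the same core idea as the paper's: membership in $\B{trunc_{s+1}(v)}$ is reduced via the Borel-order characterization to a componentwise inequality of sorted factorizations, which is then checked directly. The paper's own proof is a terse two-line sketch that only argues the forward implication (``this is only possible if $j\leq max(trunc_{s+1}(v))$''); your sorted-insertion bookkeeping supplies the converse, which is precisely where the hypothesis $u\in G(\B{trunc_s(v)})$ and the monotonicity of the sorted indices of $v$ actually come into play.
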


\begin{proof}
    First, notice that $uy_j\in \B{trunc_{s+1}(v)}$ if and only if $trunc_{s+1}(v)\prec uy_j$. And this is only possible if $j\leq max(trunc_{s+1}(v))$.
\end{proof}

\begin{corollary}\label{principalSD}
    When $I=\overline{m}\subset S$ is a principal $w$-stable ideal, we get the following Stanley decomposition
\begin{equation*}
    S/\overline{m} = \bigoplus_{s=0}^{d-1} \Bigg(\bigoplus_{u\in \psi^{-1}(G_s)} u\cdot\mathbb{K}[x_j:j>max(trunc_{s+1}(\psi(m)))]\Bigg)
\end{equation*}
\end{corollary}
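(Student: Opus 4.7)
The plan is to specialize the preceding theorem to the principal case $I = \overline{m}$, which requires only that we simplify the condition $\psi(u)y_j \notin trunc_{s+1}(\B{\psi(I)})$ appearing in the indexing set of the inner polynomial ring. Since $I = \overline{m} = \psi^{-1}(\B{\psi(m)})$, we have $\B{\psi(I)} = \B{\psi(m)}$, and so the ideal whose $(s+1)$-truncation appears in the condition is simply $\B{\psi(m)}$, i.e.\ the Borel closure of a single monomial.

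Next, I would invoke Lemma \ref{trunc_lemma} with $J = \B{\psi(m)}$: since $\psi(m)$ is the unique Borel generator, the lemma yields
\begin{equation*}
    trunc_{s+1}(\B{\psi(m)}) = \B{trunc_{s+1}(\psi(m))}.
\end{equation*}
This is the key reduction that the principal hypothesis unlocks, because it replaces a truncation of the full Borel closure by the Borel closure of a single truncated monomial, to which the immediately preceding lemma applies directly.

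Now I would argue that the hypothesis of that lemma is satisfied for each $u \in \psi^{-1}(G_s)$. By definition, $G_s$ consists of the degree $s$ minimal generators of $trunc_s(\B{\psi(m)}) = \B{trunc_s(\psi(m))}$ not already in $\B{\psi(m)}$, so in particular $\psi(u) \in G(\B{trunc_s(\psi(m))})$. The lemma then gives
\begin{equation*}
    \psi(u)\, y_j \in \B{trunc_{s+1}(\psi(m))} \iff j \leq max(trunc_{s+1}(\psi(m))).
\end{equation*}
Combining with the previous step, the condition $\psi(u)y_j \notin trunc_{s+1}(\B{\psi(I)})$ is equivalent to $j > max(trunc_{s+1}(\psi(m)))$, which is exactly the condition appearing in the statement of the corollary.

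Finally, I would substitute this equivalent condition into the Stanley decomposition provided by the theorem to obtain the claimed formula. No genuine obstacle is expected; the work is purely a careful unpacking of the definitions of $G_s$ and $trunc_{s+1}$ in the principal case. The most delicate bookkeeping point is verifying that $\psi(u)$ really is a degree-$s$ minimal generator of the truncated Borel closure (so that the preceding lemma is applicable with $v = \psi(m)$), which follows immediately from how $G_s$ was defined in the theorem.
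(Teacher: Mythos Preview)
Your proposal is correct and follows essentially the same route as the paper: reduce to $\B{\psi(I)}=\B{\psi(m)}$, use Lemma~\ref{trunc_lemma} to identify $trunc_{s+1}(\B{\psi(m)})$ with $\B{trunc_{s+1}(\psi(m))}$, and then apply the preceding lemma to rewrite the membership condition as $j>max(trunc_{s+1}(\psi(m)))$. If anything, you are slightly more careful than the paper in explicitly checking that $\psi(u)\in G(\B{trunc_s(\psi(m))})$ so that the hypothesis of that lemma is met.
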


\begin{proof}
    Since $I=\overline{m}$ is a principal $w$-stable ideal, $\B{\psi(I)}=\B{\psi(m)}$ is a principal strongly stable ideal. By Lemma \ref{lem:truncgens}, we have $trunc_{s+1}(\B{\psi(I)})=\B{trunc_{s+1}(\psi(m))}$. Thus, the Stanley decomposition of $S/I$ is
    \begin{align*}
        S/I &= \bigoplus_{s=0}^{d-1} \Bigg(\bigoplus_{u\in \psi^{-1}(G_s)} u\cdot\mathbb{K}[x_j:\psi(u)y_j\notin trunc_{s+1}(\B{\psi(I)})]\Bigg)\\
        &= \bigoplus_{s=0}^{d-1} \Bigg(\bigoplus_{u\in \psi^{-1}(G_s)} u\cdot\mathbb{K}[x_j:\psi(u)y_j\notin \B{trunc_{s+1}(\psi(m))}]\Bigg)\\
        &= \bigoplus_{s=0}^{d-1} \Bigg(\bigoplus_{u\in \psi^{-1}(G_s)} u\cdot\mathbb{K}[x_j:j>max(trunc_{s+1}(\psi(m)))]\Bigg)
    \end{align*}
\end{proof}

\section{Hilbert Series}\label{sec:hilbseries}

If $I\subset S$ is a principal $w$-stable ideal and we consider $S$ as being the \textit{weighted} polynomial ring ($deg(x_i)=w_i$), then we get the following formula for the Hilbert series of $S/I$.

\begin{theorem}\label{thm:principalhs}
    Let $I=\overline{m}$ be a principal $w$-stable ideal generated by a degree $d$ monomial $m\in S$ and let $k_s=max(trunc_{s+1}(\psi(m)))$. Then
    \begin{equation*}
        HS(S/I) = \sum_{s=0}^{d-1}\frac{c_st^s}{\prod_{j=k_s+1}^n(1-t^{w_j})}
    \end{equation*}
    where $c_s$ is the cardinality of $\psi^{-1}(G_s)=\psi^{-1}(G(trunc_s(\B{\psi(I)}))_s\setminus\B{\psi(m)})$.
\end{theorem}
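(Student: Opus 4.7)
The plan is to read off the Hilbert series directly from the Stanley decomposition provided by Corollary \ref{principalSD}. Since a Stanley decomposition is, in particular, a direct sum decomposition of $S/I$ as a graded $\mathbb{K}$-vector space (with the weighted grading $\deg(x_i)=w_i$), the Hilbert series is additive over the pieces, so it suffices to compute $HS$ for each summand $u\cdot \mathbb{K}[x_j : j > k_s]$ and then sum.

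First, I would verify that each monomial $u\in\psi^{-1}(G_s)$ has weighted degree exactly $s$. This is the key compatibility point: the map $\psi\colon x_i\mapsto y_i^{w_i}$ is designed precisely so that the weighted degree of $u$ in $S$ equals the standard degree of $\psi(u)$ in $R$. Since $G_s$ consists of degree-$s$ generators of $trunc_s(\B{\psi(I)})$ by definition, each such $u$ contributes a factor of $t^s$ to $HS(u\cdot\mathbb{K}[x_j:j>k_s])$. Next, I would note that the Hilbert series of the polynomial subring $\mathbb{K}[x_j : j>k_s]$ in the weighted grading is the standard
\begin{equation*}
    \prod_{j=k_s+1}^n\frac{1}{1-t^{w_j}},
\end{equation*}
since it is a polynomial ring in the variables $x_{k_s+1},\dots,x_n$ with respective weighted degrees $w_{k_s+1},\dots,w_n$.

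Multiplying these two contributions, each summand $u\cdot\mathbb{K}[x_j:j>k_s]$ contributes $t^s/\prod_{j=k_s+1}^n(1-t^{w_j})$ to the Hilbert series. Since the inner direct sum in Corollary \ref{principalSD} ranges over the $c_s=|\psi^{-1}(G_s)|$ monomials $u$ of weighted degree $s$, collecting these $c_s$ identical contributions gives $c_st^s/\prod_{j=k_s+1}^n(1-t^{w_j})$. Summing over $s=0,\dots,d-1$ yields the stated formula.

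The only subtle point — and where I would slow down — is the identification $\deg_w(u)=s$ for $u\in\psi^{-1}(G_s)$, together with the fact that every element of $G_s$ really does have a preimage under $\psi$ (so that $c_s$ is well-defined as stated). The first is immediate from the definition of $\psi$; the second follows because $G_s$ was defined as the degree-$s$ generators of a truncation of a strongly stable ideal that itself lies in the image of $\psi$, and truncation by $s$ of a monomial $\psi(m)$ produces a monomial whose variable exponents are controlled by the $w_i$'s so that its preimage in $S$ exists. Beyond these bookkeeping checks, the proof is essentially a direct transcription of the Stanley decomposition into a sum of rational functions.
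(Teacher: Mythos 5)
Your proof is correct and follows essentially the same route as the paper: apply the Stanley decomposition from Corollary \ref{principalSD}, take Hilbert series term by term, and collect the $c_s$ identical contributions. One small caveat on your closing remark: it is \emph{not} generally true that every element of $G_s$ has a preimage under $\psi$ (e.g., $trunc_1(y_1^{w_1})=y_1$ is not in $\psi(S)$ when $w_1>1$), but this is also not needed — the decomposition and the definition of $c_s$ both already restrict to $\psi^{-1}(G_s)$, which discards whatever lies outside the image, so that worry can simply be dropped.
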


\begin{proof}
    By Corollary \ref{principalSD}, 
    \begin{align*}
        S/I &= \bigoplus_{s=0}^{d-1} \Bigg(\bigoplus_{u\in \psi^{-1}(G_s)} u\cdot\mathbb{K}[x_j:j>max(trunc_{s+1}(\psi(m)))]\Bigg),
    \end{align*}
    so we can write the Hilbert series of $S/I$ as
    \begin{align*}
        HS(S/I) &= HS\Bigg(\bigoplus_{s=0}^{d-1} \Bigg(\bigoplus_{u\in \psi^{-1}(G_s)} u\cdot\mathbb{K}[x_j:j>max(trunc_{s+1}(\psi(m)))]\Bigg)\Bigg)\\
        &= \sum_{s=0}^{d-1} \Bigg(\sum_{u\in\psi^{-1}(G_s)} HS\Big(u\cdot\mathbb{K}[x_j:j>k_s)]\Big)\Bigg)\Bigg)
    \end{align*}
    Each summand is of the form $u\cdot\mathbb{K}[x_j:j\geq k_s]$ for $u\in\psi^{-1}(G_s)$ and we can write its Hilbert series contribution as
    \begin{align*}
    HS(u\cdot\mathbb{K}[x_j:j\geq k_s])&=HS(u)\cdot HS(\mathbb{K}[x_j:j\geq k_s])\\
    &=t^{deg(u)}\cdot\frac{1}{\prod_{j=k_s+1}^n(1-t^{w_j})}\\
    &=\frac{t^s}{\prod_{j=k_s+1}^n(1-t^{w_j})}.
    \end{align*}
    Summing over all $u\in \psi^{-1}(G_s)$ multiplies the formula above by $c_s$, and summing over all degrees $0\leq s\leq d-1$ gives the desired formula.
\end{proof}

The coefficients $c_s$ in the formula above are tedious to compute since we need to compute $G(\B{trunc_s(m)})$ for $0\leq s\leq d-1$. In section \ref{sec:catalandiagrams}, we use \textit{Catalan diagrams} to more efficiently compute these coefficients. But first, we will take a closer look at the combinatorics of principal $w$-stable ideals and their connection to $n$-ary trees.

\section{Principal w-Stable Ideals are n-ary Trees}\label{sec:principaltrees}

\begin{definition}
    Let $m\in S$ be a monomial and fix a weight vector $w$. Define $\mathcal{T}_{w,m}^\infty$ to be the directed tree with base vertex $1$ and a directed edge $(v,vx_j)$ if $max(v)\leq j\leq max(trunc_{deg_w(v)+1}(\psi(m)))$. By convention, we set $max(1)=1$. We will also use $\mathcal{T}_{w,m}^d$ to denote the finite tree obtained by imposing the additional branching condition that $deg_w(v)<d$. When we suppress the exponent, we will take that to mean $\mathcal{T}_{w,m}=\mathcal{T}_{w,m}^{deg_w(m)}$.
\end{definition}

\begin{figure}[H]
\caption{$\mathcal{T}_{(1,1,1)}(x_3^3)$}
\label{fig:tree1}
\begin{center}
\newcommand\x{-6}
\newcommand\xa{-6}
\newcommand\xb{-6}
\newcommand\xc{-6}
\newcommand\y{0}
\newcommand\z{1.8}
\newcommand\drop{-2.5}
\scalebox{0.6}{
\begin{tikzpicture}[shorten >=1pt,->]
  \tikzstyle{vertex}=[circle,fill=black!15,minimum size=40pt,inner sep=2pt]
  \node[vertex] (1) at (0,0) {$1$};
  \node[vertex] (x) at (\xa,\drop)   {$x_1$};
  \node[vertex] (y) at (\y,\drop)  {$x_2$};
  \node[vertex] (z) at (\z,\drop) {$x_3$};
  \draw (1) -- (x);
  \draw (1) -- (y);
  \draw (1) -- (z);
  \node[vertex] (x2) at (\xa+\xb,2*\drop) {$x_1^2$};
  \node[vertex] (xy) at (\x + \y,2*\drop) {$x_1x_2$};
  \node[vertex] (xz) at (\x + \z, 2*\drop) {$x_1x_3$};
  \node[vertex] (y2) at (\y + \y, 2*\drop) {$x_2^2$};
  \node[vertex] (yz) at (\y + \z, 2*\drop) {$x_2x_3$};
  \node[vertex] (z2) at (\z + \z, 2*\drop) {$x_3^2$};
  \draw (x) -- (x2);
  \draw (x) -- (xy);
  \draw (x) -- (xz);
  \draw (y) -- (y2);
  \draw (y) -- (yz);
  \draw (z) -- (z2);
  \node[vertex] (x3) at (\xa+\xb+\xc,3*\drop) {$x_1^3$};
  \node[vertex] (x2y) at (2*\x+\y,3*\drop) {$x_1^2x_2$};
  \node[vertex] (x2z) at (2*\x+\z,3*\drop) {$x_1^2x_3$};
  \node[vertex] (y3) at (3*\y,3*\drop) {$x_2^3$};
  \node[vertex] (y2z) at (2*\y+\z,3*\drop) {$x_2^2x_3$};
  \node[vertex] (z3) at (3*\z,3*\drop) {$x_3^3$};
  \node[vertex] (xy2) at (\x+2*\y,3*\drop) {$x_1x_2^2$};
  \node[vertex] (xyz) at (\x+\y+\z,3*\drop) {$x_1x_2x_3$};
  \node[vertex] (xz2) at (\x+2*\z,3*\drop) {$x_1x_3^2$};
  \node[vertex] (yz2) at (\y+2*\z,3*\drop) {$x_2x_3^2$};
  \draw (x2) -- (x3);
  \draw (x2) -- (x2y);
  \draw (x2) -- (x2z);
  \draw (xy) -- (xy2);
  \draw (xy) -- (xyz);
  \draw (xz) -- (xz2);
  \draw (y2) -- (y3);
  \draw (y2) -- (y2z);
  \draw (yz) -- (yz2);
  \draw (z2) -- (z3);
\end{tikzpicture}
}
\end{center}
\end{figure}
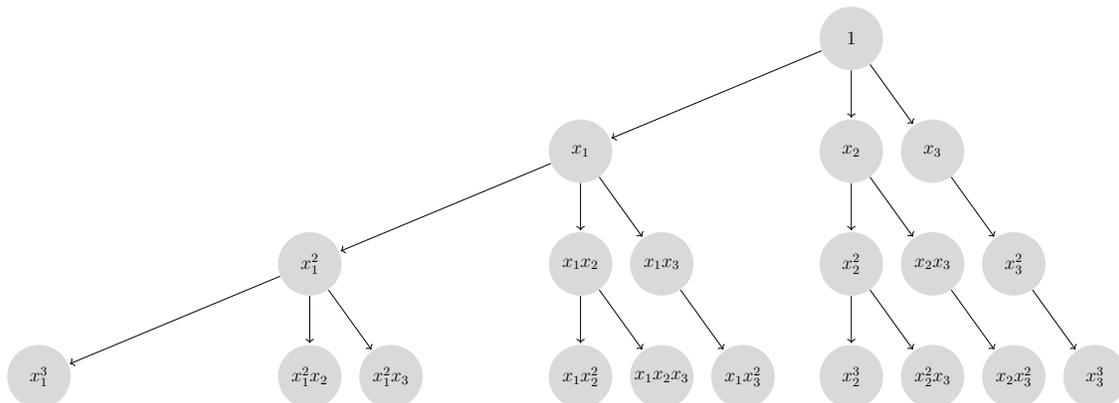

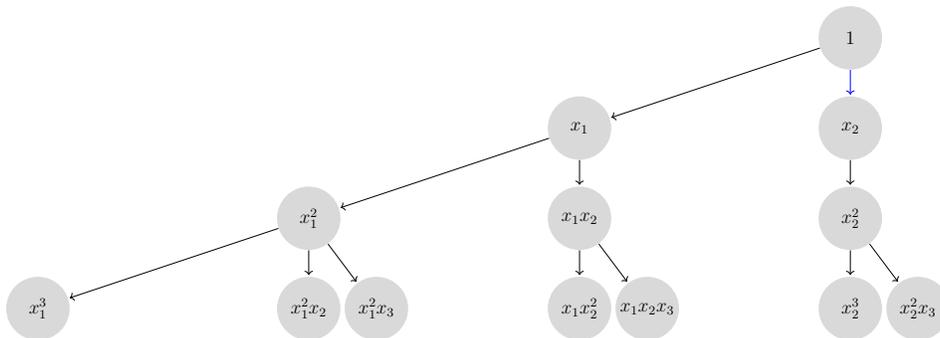
\begin{figure}[H]
\caption{$\mathcal{T}_{(1,1,1)}(x_2^2x_3)$}
\label{fig:tree2}
\begin{center}
\newcommand\x{-6}
\newcommand\y{0}
\newcommand\z{1.5}
\newcommand\drop{-2}
\newcommand\opac{0.3}
\scalebox{0.6}{
\begin{tikzpicture}[shorten >=1pt,->]
  \tikzstyle{vertex}=[circle,fill=black!15,minimum size=40pt,inner sep=2pt]
  \node[vertex] (1) at (0,0) {$1$};
  \node[vertex] (x) at (\x,\drop)   {$x_1$};
  \node[vertex] (y) at (\y,\drop)  {$x_2$};
  \draw (1) -- (x);
  \draw[draw=blue] (1) -- (y);
  \node[vertex] (x2) at (2*\x,2*\drop) {$x_1^2$};
  \node[vertex] (xy) at (\x + \y,2*\drop) {$x_1x_2$};
  \node[vertex] (y2) at (\y + \y, 2*\drop) {$x_2^2$};
  \draw (x) -- (x2);
  \draw (x) -- (xy);
  \draw (y) -- (y2);
  \node[vertex] (x3) at (3*\x,3*\drop) {};
    \node at (3*\x,3*\drop) {$x_1^3$};
  \node[vertex] (x2y) at (2*\x+\y,3*\drop) {};
    \node at (2*\x+\y,3*\drop) {$x_1^2x_2$};
  \node[vertex] (x2z) at (2*\x+\z,3*\drop) {};
    \node at (2*\x+\z,3*\drop) {$x_1^2x_3$};
  \node[vertex] (y3) at (3*\y,3*\drop) {};
  \node at (3*\y,3*\drop) {$x_2^3$};
  \node[vertex] (y2z) at (2*\y+\z,3*\drop) {};
  \node at (2*\y+\z,3*\drop) {$x_2^2x_3$}; 
  \node[vertex] (xy2) at (\x+2*\y,3*\drop) {};
  \node at (\x+2*\y,3*\drop) {$x_1x_2^2$};
  \node[vertex] (xyz) at (\x+\y+\z,3*\drop) {};
  \node at (\x+\y+\z,3*\drop) {$x_1x_2x_3$};
  \draw (x2) -- (x3);
  \draw (x2) -- (x2y);
  \draw (x2) -- (x2z);
  \draw (xy) -- (xy2);
  \draw (xy) -- (xyz);
  \draw (y2) -- (y3);
  \draw (y2) -- (y2z);
\end{tikzpicture}
}
\end{center}
\end{figure}

Note that $x_1x_3$ is not a vertex of $\mathcal{T}_{(1,1,1)}(x_2^2x_3)$, because $max(trunc_{1+1}(x_2^2x_3))=2$. Next, consider an example with a nontrivial weight vector.

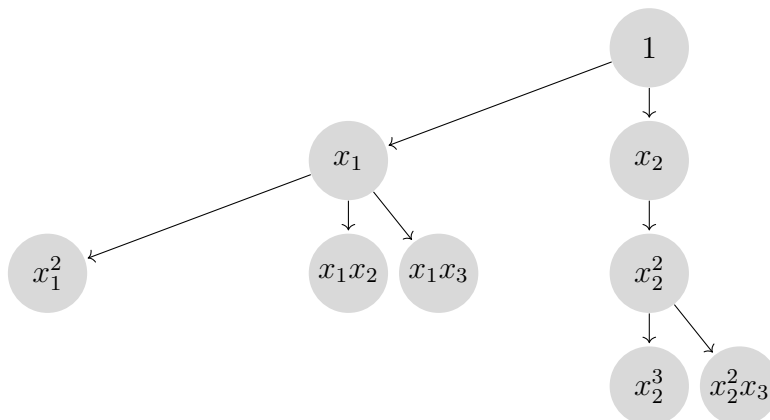
\begin{figure}[H]
\caption{$\mathcal{T}_{(4,2,1)}(x_2^2x_3)$}
\label{fig:tree3}
\begin{center}
\newcommand\x{-4}
\newcommand\y{0}
\newcommand\z{1.2}
\newcommand\drop{-1.5}
\newcommand\opac{0.3}
\scalebox{1}{
\begin{tikzpicture}[shorten >=1pt,->]
  \tikzstyle{vertex}=[circle,fill=black!15,minimum size=30pt,inner sep=2pt]
  \node[vertex] (1) at (0,0) {$1$};
  \node[vertex] (x) at (\x,\drop)   {$x_1$};
  \node[vertex] (y) at (\y,\drop)  {$x_2$};
  \draw (1) -- (x);
  \draw (1) -- (y);
  \node[vertex] (x2) at (2*\x,2*\drop) {};
  \node at (2*\x,2*\drop) {$x_1^2$};
  \node[vertex] (xy) at (\x + \y,2*\drop) {};
  \node at (\x+\y,2*\drop) {$x_1x_2$};
  \node[vertex] (xz) at (\x + \z, 2*\drop) {};
  \node at (\x+\z,2*\drop) {$x_1x_3$};
  \node[vertex] (y2) at (\y + \y, 2*\drop) {$x_2^2$};
  \draw (x) -- (x2);
  \draw (x) -- (xy);
  \draw (x) -- (xz);
  \draw (y) -- (y2);
  \node[vertex] (y3) at (3*\y,3*\drop) {};
  \node at (3*\y,3*\drop) {$x_2^3$};
  \node[vertex] (y2z) at (2*\y+\z,3*\drop) {};
  \node at (2*\y+\z,3*\drop) {$x_2^2x_3$}; 
  \draw (y2) -- (y3);
  \draw (y2) -- (y2z);
\end{tikzpicture}
}
\end{center}
\end{figure}

We can get $\mathcal{T}_w(m)$ in Macaulay2 by using the \verb|treeFromMonomial| method.

{\footnotesize
\begin{verbatim}


i8 : K[x_1,x_2,x_3]

o8 = QQ[x ..x ]
         1   3
         
o8 : PolynomialRing

i9 : treeFromMonomial(x_2^2*x_3,Weights=>{4,2,1})

o9 = Digraph{1 => {x , x }         }
                    1   2
                     2
             x  => {x , x x , x x }
              1      1   1 2   1 3
                     2
             x  => {x }
              2      2
             x x  => {}
              1 2
             x x  => {}
              1 3
              2
             x  => {}
              1
              2      3   2
             x  => {x , x x }
              2      2   2 3
              2
             x x  => {}
              2 3
              3
             x  => {}
              2
o9 : Digraph


\end{verbatim}}

The following theorem gives a dictionary between $\mathcal{T}_w(m)$ and $\overline{m}$.

\begin{theorem}
    Let $V_d$ denote the set of vertices of $\mathcal{T}_{w,m}^\infty$ whose degree is greater than or equal to $d$. Then $V_d=\psi^{-1}(\B{trunc_d(\psi(m))})$.
\end{theorem}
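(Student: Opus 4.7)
The plan is to establish both inclusions $V_d \subseteq \psi^{-1}(\B{trunc_d(\psi(m))})$ and $\psi^{-1}(\B{trunc_d(\psi(m))}) \subseteq V_d$ by matching the tree's edge condition against the factored-form Borel comparison in $R$. Write $\psi(m) = y_{b_1} y_{b_2} \cdots y_{b_D}$ in sorted factored form with $D = deg_w(m)$, and extend the index by $b_t := b_D$ for $t > D$; with this convention, $\max(trunc_t(\psi(m))) = b_t$ for every $t \geq 1$, so the tree's edge condition reads uniformly as $\max(v) \leq j \leq b_{deg_w(v)+1}$. This identification is the main bookkeeping tool throughout the argument.

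The unifying observation for both inclusions is the block structure of $\psi$-images. For any monomial $v = x_{j_1} \cdots x_{j_s}$ with $j_1 \leq \cdots \leq j_s$, the factored form of $\psi(v) = y_{j_1}^{w_{j_1}} \cdots y_{j_s}^{w_{j_s}}$ splits into $s$ blocks, where the $k$-th block occupies positions $deg_w(x_{j_1}\cdots x_{j_{k-1}})+1$ through $deg_w(x_{j_1}\cdots x_{j_k})$ and is filled with the value $j_k$. The forward inclusion follows by tracing the unique directed path from $1$ to $v \in V_d$: the edge condition at step $k$ is exactly the Borel inequality at the first position of the $k$-th block, and monotonicity of the extended $b_t$ propagates it across the remaining positions of the block, ultimately yielding $\psi(v) \succ trunc_d(\psi(m))$. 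The reverse inclusion is the mirror argument: starting from $u$ with $\psi(u) \succ trunc_d(\psi(m))$ and writing $u = x_{c_1} \cdots x_{c_s}$ with $c_1 \leq \cdots \leq c_s$, the block-start inequalities supplied by the Borel hypothesis are exactly what is needed to verify each edge of the canonical path $1 \to x_{c_1} \to x_{c_1} x_{c_2} \to \cdots \to u$ in $\mathcal{T}_{w,m}^\infty$; since $\B{trunc_d(\psi(m))}$ is generated in degree $d$, one automatically has $deg_w(u) \geq d$, placing $u$ in $V_d$.

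The main obstacle is the careful indexing that aligns the $k$-th discrete step of the tree path with the $k$-th block of the factored form, particularly handling block-start positions $deg_w(v_{k-1})+1$ that exceed $d$ (and possibly $D$). The tail-convention $\max(trunc_t(\psi(m))) = \max(\psi(m))$ for $t > D$ reduces these tail comparisons to the same single block-start inequality, so once the indexing is in place no additional casework is required and both directions collapse to the same position-by-position check.
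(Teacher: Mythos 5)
Your block-structure reformulation is a nice idea and it handles the forward inclusion correctly: for a vertex $v$ of degree $\geq d$, each edge of the unique path from $1$ to $v$ gives the inequality $j_k \leq b_{p_k}$ at the block-start position $p_k$, monotonicity of the extended sequence $(b_t)$ propagates this across the block, and so $\psi(v)\succ trunc_d(\psi(m))$. The reverse inclusion, however, has a real gap. The hypothesis $\psi(u)\succ trunc_d(\psi(m))$ only gives positionwise inequalities $c_t\leq b_t$ for $t\leq \deg(trunc_d(\psi(m)))=\min(d,deg_w(m))$; it says nothing about positions past $d$. If $deg_w(u)>d$, the canonical path to $u$ contains edges whose conditions live at block-start positions $p_k>d$, and those are simply not ``supplied by the Borel hypothesis.'' Concretely, with $m=x_1x_2^2$, $w=(1,1,1)$, $d=3$, the monomial $u=x_1^3x_3$ satisfies $\psi(u)=y_1^3y_3\succ y_1y_2^2=trunc_3(\psi(m))$ and so lies in $\psi^{-1}(\B{trunc_3(\psi(m))})$, yet the last edge of the path $1\to x_1\to x_1^2\to x_1^3\to x_1^3x_3$ would require $3\leq max(trunc_4(\psi(m)))=2$, which fails; $x_1^3x_3$ is not a vertex of $\mathcal{T}_{w,m}^\infty$.

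The source of the trouble is that the stated equality cannot hold verbatim: the right-hand side $\psi^{-1}(\B{trunc_d(\psi(m))})$ is a monomial ideal (closed under multiplication by any variable), while the vertex set of the tree is not, as the example shows. What is actually true, and what the corollary $sink(\mathcal{T}_w(m))=G(\overline m)$ uses, is that for $0\leq d\leq deg_w(m)$ the \emph{degree-$d$} vertices of $\mathcal{T}_{w,m}^\infty$ coincide with the degree-$d$ monomials of $\psi^{-1}(\B{trunc_d(\psi(m))})$, equivalently that the vertices of degree $\geq d$ generate that ideal. Under that reading your argument closes cleanly in both directions, because for a monomial $u$ of weighted degree exactly $d$ every block-start position $p_k$ satisfies $p_k\leq deg_w(u)=d$, so the Borel hypothesis does supply every required inequality. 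Your direct block-by-block check is a genuinely different (and, once the statement is corrected, rather cleaner) route than the paper's induction on $d$; worth noting is that the paper's own inductive proof of the reverse inclusion, read literally, hits the same obstruction at the step ``repeat the procedure for $j_2,\dots,j_s$'' when the degrees run past $d$, so the precise scope of the statement needs fixing regardless of which proof one adopts.

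Also, your closing remark that ``$\B{trunc_d(\psi(m))}$ is generated in degree $d$'' needs the caveat $d\leq deg_w(m)$ (otherwise it is generated in degree $deg_w(m)$); with the corrected statement that restriction is implicit, but it should be made explicit.
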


\begin{proof}
    The base case ($d=0$) is trivial.\\

    ($\subseteq$) Let $\tilde{v}\in V_{d+1}$ and write $\tilde{v}=vx_{j_1}x_{j_2}\cdots x_{j_s}\in V_{d+1}$ so that $max(v)\leq j_1\leq j_2\leq \cdots \leq j_s$, $deg_w(v)\leq d$, and $deg_w(vj_1)>d$. Then $v\in V_{d-w_{j_1}}$, so $\psi(v)\in\B{trunc_{d-w_{j_1}}(\psi(m))}$. Now the branch structure of $\mathcal{T}_{w,m}^\infty$ gives
    \begin{equation*}
        j_1\leq max(trunc_{d-w_{j_1}+1}(\psi(m))),
    \end{equation*}
    so $j_1\leq max(trunc_{d+1}(\psi(m)))$. Thus, $\psi(vx_{j_1})\in\B{trunc_{d+1}(\psi(m))}$. It follows that $\psi(\tilde{v})\in \B{trunc_{d+1}(\psi(m))}$.\\

    ($\supseteq$) Let $\tilde{u}\in\psi^{-1}(\B{trunc_{d+1}(\psi(m))})$ and write $\tilde{u}=ux_{j_1}x_{j_2}\cdots x_{j_s}$ so that $deg_w(u)\leq d$ and $deg_w(ux_{j_1})>d$. Then $\psi(ux_{j_1})=trunc_{deg_w(u)+w_{j_1}}(\nu)$ for some $\nu\in\B{\psi(m)}$. Thus, 
    \begin{align*}
        j_1 &= max(trunc_{deg_w(u)+w_{j_1}}(\nu))\\
        &= max(trunc_{deg_w(u)+1}(\nu))\\
        &\leq max(trunc_{deg_w(u)+1}(\psi(m)),
    \end{align*}
    so $ux_{j_1}\in V_{deg_w(u)+w_{j_1}}$. Now, we can repeat the procedure above for $j_2,\dots,j_s$ to get $\tilde{u}\in V_{deg_w(\tilde{u})}\subseteq V_{d+1}$.
\end{proof}

\begin{corollary}
    The set $sink(\mathcal{T}_w(m))$ is equal to $G(\overline{m})$.
\end{corollary}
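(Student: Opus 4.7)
The plan is to reduce the claim to the preceding theorem applied with $d=deg_w(m)$. Since $trunc_d(\psi(m))=\psi(m)$ when $d=deg_w(m)$, that theorem gives $V_d=\psi^{-1}(\B{\psi(m)})=\overline{m}$. I would then match the sinks of $\mathcal{T}_w(m)=\mathcal{T}_{w,m}^d$ with the minimal generators $G(\overline{m})$, using $V_d=\overline{m}$ as a dictionary between the tree and the ideal.

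The key combinatorial step I would prove first is that a vertex $v$ of $\mathcal{T}_{w,m}^d$ is a sink if and only if $deg_w(v)\geq d$. The ``if'' direction is immediate from the branching condition defining $\mathcal{T}_{w,m}^d$. For ``only if,'' suppose $deg_w(v)<d$. If $v=1$ then the convention $max(1)=1$ makes $x_1$ a child. Otherwise $v$ was reached from its parent $v'=v/x_{max(v)}$ via an edge, which required $max(v)\leq max(trunc_{deg_w(v')+1}(\psi(m)))$. Since the sequence $k\mapsto max(trunc_k(\psi(m)))$ is non-decreasing and $deg_w(v)+1>deg_w(v')+1$, we also obtain $max(v)\leq max(trunc_{deg_w(v)+1}(\psi(m)))$, producing an outgoing edge from $v$.

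Next I would invoke the standard fact, which follows quickly from strong stability, that for any strongly stable ideal $J$ a monomial $u\in J$ is a minimal generator if and only if $u/x_{max(u)}\notin J$; this applies to $\overline{m}$ because every $w$-stable ideal is strongly stable. With both ingredients in hand, the two inclusions are immediate. If $u$ is a sink, then $deg_w(u)\geq d$ places $u$ in $V_d=\overline{m}$, while the parent $u/x_{max(u)}$ has $deg_w<d$ by the tree's branching condition, so by the theorem it cannot lie in $V_d=\overline{m}$, forcing $u\in G(\overline{m})$. Conversely, if $u\in G(\overline{m})$ then $u\in V_d$ realizes $u$ as a vertex of $\mathcal{T}_{w,m}^\infty$ with $deg_w(u)\geq d$, while $u/x_{max(u)}\notin\overline{m}=V_d$ forces $deg_w(u/x_{max(u)})<d$; all strict ancestors of $u$ on the unique path from $1$ have even smaller weighted degree, so $u$ lives in $\mathcal{T}_{w,m}^d$ and is a sink by the first step.

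The main obstacle is the sink-characterization lemma, which rests on the monotonicity of $k\mapsto max(trunc_k(\psi(m)))$ together with the observation that ``having a way in'' already guarantees ``a way out''; once that is in place, everything reduces to a clean translation between the tree, the filtration $V_k$, and $\overline{m}$ via the preceding theorem.
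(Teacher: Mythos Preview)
Your proof is correct and follows essentially the same approach as the paper: apply the preceding theorem with $d=deg_w(m)$ to get $V_d=\overline{m}$, then identify the sinks of $\mathcal{T}_w(m)$ as exactly the vertices reaching weighted degree $\geq d$. Your version is considerably more thorough than the paper's two-sentence sketch---in particular, you explicitly justify the sink characterization via monotonicity of $k\mapsto max(trunc_k(\psi(m)))$ and invoke the minimal-generator criterion $u\in G(J)\iff u/x_{max(u)}\notin J$ for strongly stable $J$, both of which the paper leaves implicit.
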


\begin{proof}
    By the previous theorem, we know that $\overline{m}=V_{deg_w(m)}$. Since our tree is $\mathcal{T}_w(m)=\mathcal{T}_w^{deg_w(m)}(m)$, branches end when a vertex reaches $deg_w(m)$.
\end{proof}

We will now translate our earlier results on Stanley decompositions and Hilbert series for principal $w$-stable ideals in terms of $\mathcal{T}_w(m)$.

\begin{theorem}
    Let $V,E$ denote the sets of vertices and (directed) edges of $\mathcal{T}_w(m)$.
    The Stanley decomposition of $S/\overline{m}$ is given by 
    \begin{equation*}
        S/\overline{m} = \bigoplus_{u\in V\setminus sink(\mathcal{T}_w(m))} u\cdot\mathbb{K}[x_j:max(u)\leq j \text{ and } (u,ux_j)\notin E]
    \end{equation*}
\end{theorem}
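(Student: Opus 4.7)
The plan is to translate the Stanley decomposition of Corollary \ref{principalSD} directly into the language of the tree $\mathcal{T}_w(m)$, matching its two ingredients (the indexing monomials and the attached polynomial rings) with the non-sink vertices and their ``missing-edge'' variables. First I would identify the indexing set. The outer sum in Corollary \ref{principalSD} runs over $0 \leq s \leq d-1$ and the inner sum over $u \in \psi^{-1}(G_s)$, and by Lemma \ref{trunc_lemma} we have $trunc_s(\B{\psi(m)}) = \B{trunc_s(\psi(m))}$. Thus $G_s$ consists of the degree-$s$ generators of $\B{trunc_s(\psi(m))}$ not lying in $\B{\psi(m)}$. Since $\B{\psi(m)}$ is concentrated in degrees $\geq d$ and $s < d$, the second condition is automatic, so membership in $\psi^{-1}(G_s)$ reduces to $deg_w(u) = s$ together with $\psi(u) \in \B{trunc_s(\psi(m))}$. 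By the theorem immediately preceding this one ($V_s = \psi^{-1}(\B{trunc_s(\psi(m))})$), this pins $u$ down as a weighted-degree-$s$ vertex of $\mathcal{T}_{w,m}^\infty$. Since the finite tree $\mathcal{T}_w(m)$ agrees with $\mathcal{T}_{w,m}^\infty$ on all vertices of weighted degree less than $d$, disjointly unioning over $s$ yields
\begin{equation*}
V \setminus sink(\mathcal{T}_w(m)) = \bigsqcup_{s=0}^{d-1} \psi^{-1}(G_s).
\end{equation*}

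Next I would match the polynomial-ring factors. For a non-sink vertex $u$ with $deg_w(u) = s$, the edge definition of $\mathcal{T}_{w,m}$ says that $(u, ux_j) \in E$ exactly when $max(u) \leq j \leq max(trunc_{s+1}(\psi(m)))$. Consequently $\{j : max(u) \leq j,\, (u, ux_j) \notin E\}$ equals $\{j : j > max(trunc_{s+1}(\psi(m)))\}$ as soon as we know $max(u) \leq max(trunc_{s+1}(\psi(m)))$; once that inequality is in hand, the tree's polynomial ring at $u$ agrees with the one from Corollary \ref{principalSD}.

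The main obstacle is precisely this inequality. I would establish it by monotonicity: since $\psi(m)$ written in factored form has its variable indices in non-decreasing order, $max(trunc_{d'}(\psi(m)))$ is a weakly increasing function of $d'$. The vertex $u$ was produced by a final edge from its parent $u' = u/x_{max(u)}$, which has weighted degree $s - w_{max(u)}$, and the tree's branching rule forces
\begin{equation*}
max(u) \leq max(trunc_{s - w_{max(u)} + 1}(\psi(m))) \leq max(trunc_{s+1}(\psi(m))).
\end{equation*}
Combining the indexing identification with the matching of polynomial rings then rewrites the decomposition of Corollary \ref{principalSD} as the claimed tree decomposition; everything outside this monotonicity step is bookkeeping.
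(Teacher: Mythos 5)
Your proof is correct and follows the same route as the paper: both start from Corollary \ref{principalSD} and translate the indexing set $\bigsqcup_s \psi^{-1}(G_s)$ into the non-sink vertices of $\mathcal{T}_w(m)$. You are, in fact, more careful than the paper's own proof, which stops after matching the index sets; your explicit check that the polynomial-ring factor at each vertex $u$ agrees with the one in Corollary \ref{principalSD} — via the branching rule at the parent $u/x_{max(u)}$ and the monotonicity of $max(trunc_a(\psi(m)))$ in $a$ — supplies a step the paper leaves implicit.
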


\begin{proof}
    By Corollary \ref{principalSD}, we have
\begin{align*}
    S/\overline{m} &= \bigoplus_{s=0}^{d-1} \Bigg(\bigoplus_{u\in \psi^{-1}(G_s)} u\cdot\mathbb{K}[x_j:j>max(trunc_{s+1}(\psi(m))]\Bigg)
\end{align*}
Notice that 
\begin{align*}
    \psi^{-1}(G_s)&=\psi^{-1}(G(trunc_s(\B{\psi(m)}))\setminus\B{\psi(m)})\\
    &=\psi^{-1}(G(\B{trunc_s(\psi(m))}))\setminus\psi^{-1}(\B{\psi(m)})\\
    &=\psi^{-1}(G(\B{trunc_s(\psi(m))}))\setminus \overline{m}
\end{align*}
Now, if we sum over all $s=0,\dots,d-1$, we obtain all vertices except those in $\overline{m}$ which is exactly $sink(\mathcal{T}_w(m))$, so
\begin{align*}
    S/\overline{m} &= \bigoplus_{u\in V\setminus sink(\mathcal{T}_w(m))} u\cdot\mathbb{K}[x_j:max(u)\leq j \text{ and } (u,ux_j)\notin E]
\end{align*}
\end{proof}

We can also interpret the Hilbert series of $S/\overline{m}$ in terms of $\mathcal{T}_{w,m}$.

\begin{theorem}
    Let $V,E$ denote the sets of vertices and edges of $\mathcal{T}_{w,m}$. The Hilbert series of $S/\overline{m}$ is given by 
    \begin{equation*}
        \sum_{s=0}^{d-1}\frac{c_st^s}{\prod_{k_s+1}^n (1-t^{w_j})}
    \end{equation*}
    where $c_s$ is the number of degree $s$ vertices in $\mathcal{T}_{w,m}$ and $k_s$ is the largest index such that $(v,vx_{k_s})\in E$ for any degree $s$ vertex $v\in V$.
\end{theorem}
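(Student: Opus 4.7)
The plan is to derive this theorem as a direct translation of Theorem \ref{thm:principalhs} into the language of the tree $\mathcal{T}_{w,m}$. Since the formulas match term-by-term, the entire content of the proof reduces to reinterpreting the two ingredients $c_s$ and $k_s$ of Theorem \ref{thm:principalhs} in terms of tree data.

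First I would handle the exponent $k_s$, which is the easier of the two. By definition, a directed edge $(v, vx_j)$ of $\mathcal{T}_{w,m}$ exists exactly when $\max(v)\leq j\leq \max(\mathrm{trunc}_{\deg_w(v)+1}(\psi(m)))$. Consequently, for a degree-$s$ vertex $v$, the largest $j$ such that $(v,vx_j)$ is an edge is precisely $\max(\mathrm{trunc}_{s+1}(\psi(m)))$, which matches the definition of $k_s$ in Theorem \ref{thm:principalhs}. The only thing to check is that this maximum is actually realized, i.e.\ that at least one degree-$s$ vertex has $\max(v)\leq k_s$, which follows from the fact that the prefix $\mathrm{trunc}_s(\psi(m))\in \psi(S)$ pulls back to a degree-$s$ vertex whose maximum index is at most $k_s$.

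The main obstacle is identifying the cardinalities $c_s$, since this requires unwinding the definition $c_s = |\psi^{-1}(G_s)|$, with $G_s$ the degree-$s$ generators of $\mathrm{trunc}_s(\B{\psi(m)})$ that are not in $\B{\psi(m)}$. The key tool is the theorem showing $V_s = \psi^{-1}(\B{\mathrm{trunc}_s(\psi(m))})$ together with Lemma \ref{trunc_lemma}, which gives $\mathrm{trunc}_s(\B{\psi(m)})=\B{\mathrm{trunc}_s(\psi(m))}$ in the principal case. Combining these, the degree-$s$ elements of $\psi^{-1}(\mathrm{trunc}_s(\B{\psi(m)}))$ are exactly the weighted-degree-$s$ vertices of $\mathcal{T}_{w,m}^\infty$. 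Removing those lying in $\overline{m}$ is automatic when $s<d$: any such vertex would have $\psi$-image in $\B{\psi(m)}$, but $\B{\psi(m)}$ is generated in degree $d$, so it contains no nonzero element of degree $s<d$. Thus $\psi^{-1}(G_s)$ is exactly the set of degree-$s$ vertices of $\mathcal{T}_{w,m}$, and $c_s$ counts them.

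Finally I would assemble the two translations: substituting the reinterpreted $c_s$ (degree-$s$ vertex count) and $k_s$ (maximal outgoing index from a degree-$s$ vertex) into the formula of Theorem \ref{thm:principalhs} yields the stated expression
\[
HS(S/\overline{m}) = \sum_{s=0}^{d-1}\frac{c_s t^s}{\prod_{j=k_s+1}^n(1-t^{w_j})}.
\]
I expect the write-up to be short; the only subtlety is the bookkeeping in the second paragraph making sure that ``degree-$s$ vertex of $\mathcal{T}_{w,m}$'' and ``preimage of a new degree-$s$ generator of the truncation'' really pick out the same set, which hinges on the already-established identification $V_s=\psi^{-1}(\B{\mathrm{trunc}_s(\psi(m))})$.
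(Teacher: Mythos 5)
Your approach — reading off the formula of Theorem \ref{thm:principalhs} in tree language via the identification $V_s=\psi^{-1}(\B{trunc_s(\psi(m))})$ and Lemma \ref{trunc_lemma} — is exactly the intended argument; the paper states this theorem without proof as an immediate translation of the preceding results, and your second paragraph correctly shows that ``degree-$s$ vertex of $\mathcal{T}_{w,m}$'' coincides with ``element of $\psi^{-1}(G_s)$'' (the point that $\B{\psi(m)}$ has no nonzero elements of degree $s<d$ is the right reason to drop the set difference).

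One small slip in the first paragraph: you assert that $trunc_s(\psi(m))\in\psi(S)$ and pull it back to a degree-$s$ vertex, but this can fail — for $w=(2,1)$ and $m=x_1x_2$ one has $trunc_1(\psi(m))=y_1\notin\psi(S)$, and indeed $\mathcal{T}_{w,m}$ has no degree-$1$ vertex in that case. The fix is easy and makes the point moot: if $v$ is any degree-$s$ vertex then $\psi(v)$ is a Borel image of $trunc_s(\psi(m))$, so $\max(v)=\max(\psi(v))\leq\max(trunc_s(\psi(m)))\leq\max(trunc_{s+1}(\psi(m)))=k_s$, giving the edge $(v,vx_{k_s})$; and if there is no degree-$s$ vertex at all then $c_s=0$ and the $s$-th summand vanishes regardless of how $k_s$ is (not) defined. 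With that correction the proof is complete.
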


\section{Catalan Diagrams}\label{sec:catalandiagrams}

In this section, we generalize the notion of a Catalan diagram (see Definition 5.2 of \cite{francisco2011borel}) to a weighted Catalan diagram with respect to a given weight vector. The combinatorial structure of a principal $w$-stable ideal $\overline{m}$ allows us to efficiently track of the degree and maximal index of monomials in truncations of $\overline{m}$ using the weighted Catalan diagram $C_{w,m}$.

\begin{definition}
    For a fixed weight vector $w$ and monomial $m\in S$, we can form $C_{w,m}$, the \textit{weighted Catalan diagram} of $m$ with respect to $w$, as follows. Construct the classic Catalan diagram with shape $\psi(m)$ with an additional $max(w)-1$ rows of length $n$ at the bottom. Then fill in the entries with the following recursion.
    \begin{align*}
        C_{w,m}(0,1) &= 1\\
        C_{w,m}(a,b) &= \begin{cases} \sum_{k=1}^b C_{w,m}(a-w_b,k) & 0\leq a-w_b<d\text{  and  } max(trunc_{a-w_b+1}(\psi(m)))\geq b \\ \hspace{1.8cm} 0 & \text{otherwise} \end{cases}
    \end{align*}
\end{definition}

\begin{figure}[H]
\caption{The Catalan diagram $C_{(3,2,1)}(x_1x_2^3x_3^2)$}
\label{fig:cat1}
\begin{center}
\begin{ytableau}
    \none[s] & \none & \none & \none & \none[c_s] \\
    \none[0] & \none[1] & \none & \none & \none[1] \\
    \none[1] & 0  & \none & \none & \none[0] \\
    \none[2] & 0  & \none & \none & \none[0] \\
    \none[3] & 1  & \none & \none & \none[1] \\
    \none[4] & 0 & 0 & \none & \none[0] \\
    \none[5] & 0 & 1 & \none & \none[1] \\
    \none[6] & 1 & 0 & \none & \none[1] \\
    \none[7] & 0 & 1 & \none & \none[1] \\
    \none[8] & 0 & 1 & \none & \none[1] \\
    \none[9] & 1 & 1 & \none & \none[2] \\
    \none[10] & 0 & 1 & 2 & \none[3] \\
    \none[11] & *(lightgray)0 & *(lightgray)2 & *(lightgray)3 & \none[] \\
    \none[12] & *(lightgray)1 & *(lightgray)1 & *(lightgray)0 & \none[] \\
    \none[13] & *(lightgray)0 & *(lightgray)0 & *(lightgray)0 & \none[] \\
\end{ytableau}
\end{center}
\end{figure}

The rows of degree $s\geq deg(m)$ are shaded gray. The reason for this distinction will become clear in the next section. We can use the \verb|catalanDiagram| method to compute the Catalan diagram in Macaulay2.\\

{\footnotesize
\begin{verbatim}


i10 : catalanDiagram(x_1*x_2^3*x_3^2,Weights=>{3,2,1})

o10 =  | 1 0 0 |
       | 0 0 0 |
       | 0 0 0 |
       | 1 0 0 |
       | 0 0 0 |
       | 0 1 0 |
       | 1 0 0 |
       | 0 1 0 |
       | 0 1 0 |
       | 1 1 0 |
       | 0 1 2 |
       | 0 2 3 |
       | 1 1 0 |
       | 0 0 0 |

               14       3
o10 : Matrix ZZ   <-- ZZ
\end{verbatim}}

\begin{definition}
    Let $I\subset S$ be a $w$-stable ideal with minimal generators $G(I)=\{m_1,\dots,m_s\}$. For each $1\leq i\leq n$ and each degree $d$, let $q_i^d(I)$ be the cardinality of the set $\{m\in G(I):max(m)=i \text{ and } deg(m)=d\}$.
\end{definition}

\begin{theorem}\label{thm:catalan1}
Fix a weight vector $w$ and a monomial $m\in S$. Then for $1\leq a\leq d$,
    \begin{align*}
        C_{w,m}(a,b) &= q_b^a(\psi^{-1}(trunc_a(\B{\psi(m)})).
    \end{align*}
\end{theorem}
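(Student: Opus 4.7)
The plan is to induct on the degree index $a$, identifying $C_{w,m}(a,b)$ with the number of monomials $u \in I_a := \psi^{-1}(trunc_a(\B{\psi(m)}))$ satisfying $deg_w(u) = a$ and $max(u) = b$. A preliminary reduction is the key: because $\B{\psi(m)}$ is principal, Lemma \ref{trunc_lemma} gives $trunc_a(\B{\psi(m)}) = \B{trunc_a(\psi(m))}$, an ideal generated in the single degree $a$ for $1 \leq a \leq d$. Hence every $u \in I_a$ has $deg_w(u) \geq a$, and any $u \in I_a$ with $deg_w(u) = a$ is automatically a minimal generator (since every proper divisor $u/x_i$ has strictly smaller weighted degree, forcing it out of $I_a$). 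So the count sought by $q_b^a(I_a)$ reduces to counting such monomials.

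For the base case, I extend the theorem to $a = 0$: then $I_0 = S$, the sole minimal generator is $1$, and the convention $max(1) = 1$ yields $q_1^0 = 1 = C_{w,m}(0,1)$. For the inductive step, fix $1 \leq a \leq d$ and let $u \in I_a$ with $deg_w(u) = a$ and $max(u) = b$. Factor $u = v \cdot x_b$ with $max(v) \leq b$ and $deg_w(v) = a - w_b$. Writing sorted factorizations $\psi(m) = y_{j_1} \cdots y_{j_d}$ and $\psi(v) = y_{k_1} \cdots y_{k_{a-w_b}}$ (with $k_{a-w_b} \leq b$), the product $\psi(u) = y_{k_1} \cdots y_{k_{a-w_b}} y_b^{w_b}$ is already sorted. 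The Borel-order condition $\psi(u) \succ trunc_a(\psi(m))$ then decomposes cleanly into (i) $k_\ell \leq j_\ell$ for $\ell \leq a - w_b$, equivalent to $\psi(v) \succ trunc_{a-w_b}(\psi(m))$, i.e. $v \in I_{a-w_b}$; and (ii) $b \leq j_\ell$ for $\ell = a-w_b+1, \ldots, a$, which by monotonicity of the sorted $j_\ell$'s collapses to $b \leq max(trunc_{a-w_b+1}(\psi(m)))$.

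Condition (i) together with the inductive hypothesis places $v$ in bijection with the monomials counted by $C_{w,m}(a-w_b,k)$ as $k = max(v)$ ranges over $\{1, \ldots, b\}$. Condition (ii) is precisely the branching qualifier in the recursion defining $C_{w,m}(a,b)$; when it or the constraint $a \geq w_b$ fails, no such $u$ exists and both sides are zero. Summing over $k$ recovers the recursion, completing the induction. The main technical hurdle is the sorted-factorization translation in the inductive step: one must verify that appending $w_b$ copies of $y_b$ to $\psi(v)$ truly lands in the final $w_b$ positions of the sorted factorization of $\psi(u)$, which is exactly where $max(v) \leq b$ is essential; without it, the clean split of the Borel-order condition into ``old'' and ``new'' positions would break down.
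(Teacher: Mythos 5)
Your proof is correct and follows essentially the same inductive strategy as the paper: induct on $a$, factor a degree-$a$ generator with max-index $b$ as $v\cdot x_b$, and reduce to counting degree-$(a-w_b)$ generators $v$ with $\max(v)\le b$ via the inductive hypothesis. If anything your writeup is a bit more careful than the paper's, since you explicitly verify both directions of the bijection and the zero branch of the Catalan recursion (the case $a<w_b$ or $b>max(trunc_{a-w_b+1}(\psi(m)))$), steps the published proof leaves tacit.
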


\begin{proof}
   We will prove this by induction on $1\leq a\leq d+max(w)-1$. First, notice that
    \begin{align*}
        C_{w,m}(1,b) &= \begin{cases} 1 & w_b=1 \\
        0 & w_b>1 \end{cases}
    \end{align*}
On the other hand, we have  $trunc_1(\B{\psi(m)}) = (y_1,y_2,\dots,y_{min(m)})$, so
\begin{equation*}
    \psi^{-1}(trunc_1(\B{\psi(m)}))=(x_1,x_2,\dots,x_{min(m)}).
\end{equation*}
Since $\psi^{-1}(y_i)=x_i$, it follows that
    \begin{align*}
        q_b^1(\psi^{-1}(trunc_1(\B{\psi(m)}))) &= \begin{cases} 1 & w_b=1 \\
        0 & w_b>1 \end{cases}
    \end{align*}
for $1\leq b\leq min(m)$. The base case is finished.\\

Now, assume that $C_{w,m}(l,k)=q_k^l(\psi^{-1}(trunc_l(\B{\psi(m)})))$ for $1\leq l< a\leq d$ and for all $k$. Any monomial in $G(\psi^{-1}(trunc_a(\B{\psi(m)})))$ must be of the form $ux_b=\psi^{-1}(\psi(u)y_b^{w_b})$ for some $u\in\psi^{-1}(trunc_{a-w_b}(\B{\psi(m)}))$ with $deg(u)=a-w_b$ and $max(u)\leq b$. Therefore,
\begin{align*}
    q_b^a(\psi^{-1}(trunc_a(\B{\psi(m)}))) &= \sum_{k=1}^b q_k^{a-w_b}(\psi^{-1}(trunc_{a-w_b}(\B{\psi(m)})))\\
    &= \sum_{k=1}^b C_{w,m}(a-w_b,k)\\
    &= C_{w,m}(a,b).
\end{align*}
\end{proof}

\begin{corollary}
    The coefficients $c_s$ for $0\leq s\leq d-1$ in Theorem 5 are given by the sum of the entries in the $s$\textsuperscript{th} row of $C_{w,m}$.
\end{corollary}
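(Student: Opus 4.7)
The plan is to deduce the statement immediately from Theorem \ref{thm:catalan1} together with the definition of $c_s$ given in Theorem \ref{thm:principalhs}. By Theorem \ref{thm:catalan1}, the entry $C_{w,m}(s,b)$ equals $q_b^s(\psi^{-1}(trunc_s(\B{\psi(m)})))$, i.e.\ it counts the $w$-degree $s$ minimal generators of the $w$-stable ideal $\psi^{-1}(trunc_s(\B{\psi(m)}))$ whose largest-index variable is $x_b$. Summing over $b=1,\ldots,n$ therefore yields the total number of $w$-degree $s$ minimal generators of $\psi^{-1}(trunc_s(\B{\psi(m)}))$.

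The remaining task is to match that total with $c_s = |\psi^{-1}(G_s)|$, where $G_s$ is the set of degree-$s$ minimal generators of $trunc_s(\B{\psi(m)})$ not lying in $\B{\psi(m)}$. First I would observe that, since $\B{\psi(m)}$ is principal and generated in degree $d$, it contains no elements of degree below $d$; so for $0 \leq s \leq d-1$ every degree-$s$ minimal generator of $trunc_s(\B{\psi(m)})$ automatically lies outside $\B{\psi(m)}$, and $trunc_s(\B{\psi(m)})$ is generated entirely in degree $s$. From there I would show directly that $\psi$ induces a bijection between the $w$-degree $s$ minimal generators of $\psi^{-1}(trunc_s(\B{\psi(m)}))$ and the elements of $G_s$: a monomial $u \in S$ with $\deg_w(u) = s$ lies in $\psi^{-1}(trunc_s(\B{\psi(m)}))$ iff $\psi(u)$ is a multiple of some element of $G_s$, and matching degrees forces $\psi(u) \in G_s$; conversely, no proper divisor of such a $u$ can lie in the ideal, because the ideal has no elements of $w$-degree smaller than $s$. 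Hence $u$ is a minimal generator, and the map $u \mapsto \psi(u)$ is the desired bijection.

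The only subtle point, and what I expect to be the main obstacle, is that in general $\psi^{-1}$ does not preserve minimal generating sets, so care is required to see that $|\psi^{-1}(G_s)|$ really agrees with the number of $w$-degree $s$ minimal generators of $\psi^{-1}(trunc_s(\B{\psi(m)}))$. The single-degree nature of $trunc_s(\B{\psi(m)})$ in the principal case is precisely what removes this difficulty, so the identification above goes through and the corollary follows.
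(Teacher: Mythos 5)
Your proof is correct and matches the paper's intended route. The corollary follows from Theorem \ref{thm:catalan1} once one identifies $\psi^{-1}(G_s)$ with the degree-$s$ minimal generators of $\psi^{-1}(trunc_s(\B{\psi(m)}))$ — exactly the content of the paper's Lemma \ref{lem:truncgens}, stated immediately after this corollary — and your observation that the single-degree nature of $trunc_s(\B{\psi(m)})$ for $s<d$ is what makes that bijection (and the vacuousness of the $\setminus \B{\psi(m)}$ condition) go through is the right insight.
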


This gives us a combinatorial method for computing the Hilbert series of principal $w$-stable ideals. The following theorem will help us do something similar for Betti numbers in the next section.

\begin{lemma}\label{lem:truncgens}
    Fix a weight vector $w$ and a monomial $m\in S$. Then, for $d\leq deg(m)$, we have
    \begin{equation*}
        \psi^{-1}(G(trunc_d(\B{\psi(m)}))_d) = G(\psi^{-1}(trunc_d(\B{\psi(m)})))_d
    \end{equation*}
\end{lemma}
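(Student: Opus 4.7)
The plan is to reduce both inclusions to a single structural observation: under the hypothesis $d\leq \deg_w(m)$, every minimal monomial generator of $trunc_d(\B{\psi(m)})$ has degree exactly $d$. Once this purity statement is in hand, the two inclusions will follow from straightforward divisibility arguments on $\psi$-images.

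First I would establish the purity claim. Because $\B{\psi(m)}$ is principal with Borel generator $\psi(m)$, Lemma~\ref{trunc_lemma} gives
$$trunc_d(\B{\psi(m)}) \;=\; \B{trunc_d(\psi(m))}.$$
Since $d\leq \deg(\psi(m))$, the monomial $trunc_d(\psi(m))$ has degree exactly $d$, and its Borel closure is generated in the single degree $d$ (Borel moves preserve degree). Consequently every minimal generator has degree $d$ and every element of $trunc_d(\B{\psi(m)})$ has degree at least $d$.

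For the inclusion $(\subseteq)$: let $u\in\psi^{-1}(G(trunc_d(\B{\psi(m)}))_d)$. Then $\psi(u)$ is a minimal generator of $trunc_d(\B{\psi(m)})$ of degree $d$, so $u$ lies in $\psi^{-1}(trunc_d(\B{\psi(m)}))$ and has $\deg u = d$. If $u$ were not minimal in the preimage ideal, some proper monomial divisor $v\mid u$ in $S$ would satisfy $\psi(v)\in trunc_d(\B{\psi(m)})$; but then $\psi(v)$ would be a proper divisor of $\psi(u)$ lying in the ideal, contradicting the minimality of $\psi(u)$.

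For $(\supseteq)$: let $u\in G(\psi^{-1}(trunc_d(\B{\psi(m)})))_d$. Then $\psi(u)\in trunc_d(\B{\psi(m)})$ with $\deg\psi(u)=d$. Any proper divisor of $\psi(u)$ has degree strictly less than $d$, and by the purity claim no such monomial can belong to $trunc_d(\B{\psi(m)})$. Therefore $\psi(u)$ is itself a minimal generator of $trunc_d(\B{\psi(m)})$ in degree $d$, giving $u\in \psi^{-1}(G(trunc_d(\B{\psi(m)}))_d)$. The only genuine obstacle is the degree-purity of $trunc_d(\B{\psi(m)})$, which is precisely why the hypothesis $d\leq \deg_w(m)$ is essential; without it, generators of lower degrees could appear in the truncation and break the bijective correspondence between minimal generators upstairs and downstairs.
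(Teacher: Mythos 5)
Your proof is correct and takes essentially the same approach as the paper: both reduce to the fact that $trunc_d(\B{\psi(m)})=\B{trunc_d(\psi(m))}$ is generated purely in degree $d$, so that degree-$d$ monomials of this ideal and of its $\psi$-preimage are automatically minimal generators. Your write-up simply makes the purity step and the role of the hypothesis $d\leq\deg(m)$ explicit where the paper leaves them implicit.
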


\begin{proof}
    ($\subseteq$) Let $u\in\psi^{-1}(G(trunc_d(\B{\psi(m)}))_d)$ so that $\psi(u)$ is a degree $d$ monomial generator of $trunc_d(\B{\psi(m)})$. Then $u\in\psi^{-1}(trunc_d(\B{\psi(m)}))$ also has degree $d$ and must be a generator, because every degree $d$ monomial of $\psi^{-1}(trunc_d(\B{\psi(m)}))$ is a generator.\\
    ($\supseteq$) Let $v\in G(\psi^{-1}(trunc_d(\B{\psi(m)})))_d$. Then $\psi(v)\in trunc_d(\B{\psi(m)})$ with degree $d$ and is therefore a generator. It follows that $v\in \psi^{-1}(G(trunc_d(\B{\psi(m)}))_d)$.
\end{proof}

\begin{theorem}\label{thm:catalan2}
    Fix a weight vector $w$ and a monomial $m\in S$ with $d=deg_w(m)$. For $d\leq a\leq d+max(w)-1$,
    \begin{equation*}
        C_{w,m}(a,b) = q_b^a(\overline{m}).
    \end{equation*}
\end{theorem}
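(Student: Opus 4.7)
The plan is to reduce the case $a = d$ directly to Theorem \ref{thm:catalan1} and, for $d < a \leq d + max(w) - 1$, to invoke Theorem \ref{thm:catalan1} at a lower row by splitting into cases that track the Catalan recursion. First observe that since $\psi(m)$ has degree exactly $d$ in $R$, for every $a \geq d$ one has $trunc_a(\psi(m)) = \psi(m)$ and hence, by Lemma \ref{trunc_lemma}, $trunc_a(\B{\psi(m)}) = \B{\psi(m)}$. Consequently $\psi^{-1}(trunc_a(\B{\psi(m)})) = \overline{m}$, so the $a = d$ case of the present theorem coincides with the $a = d$ instance of Theorem \ref{thm:catalan1}.

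For $d < a \leq d + max(w) - 1$, any minimal generator of $\overline{m}$ with $max = b$ and $deg_w = a$ factors uniquely as $u x_b$ with $max(u) \leq b$ and $N := deg_w(u) = a - w_b \geq 0$. The argument splits on whether $N \geq d$ or $N < d$, mirroring the dichotomy in the Catalan recursion.

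\textbf{Subcase $N \geq d$.} I would argue no such minimal generator exists, matching $C_{w,m}(a,b) = 0$. The factored form of $\psi(u)$ already has at least $d$ terms, all of index $\leq b$, so appending the $w_b$ copies of $y_b$ from $y_b^{w_b}$ leaves the first $d$ terms of the sorted factorization of $\psi(u x_b)$ unchanged. Hence $u x_b \in \overline{m}$ iff $u \in \overline{m}$, contradicting the minimality of $u x_b$.

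\textbf{Subcase $N < d$.} The branching condition $max(trunc_{N+1}(\psi(m))) \geq b$ in the recursion corresponds exactly to whether a minimal generator $u x_b$ can belong to $\overline{m}$: if it fails, both sides vanish. Assuming it holds, the map $u x_b \mapsto u$ should give a bijection between minimal generators of $\overline{m}$ with $max = b$, $deg_w = a$ and weighted-degree-$N$ elements of $\psi^{-1}(trunc_N(\B{\psi(m)}))$ with $max \leq b$; such elements are automatically minimal generators of that ideal because it is generated in degree $N$. The forward direction uses that the first $N$ factors of $\psi(u x_b)$ in sorted order coincide with $\psi(u)$'s, placing $\psi(u) \in \B{trunc_N(\psi(m))}$. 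The reverse direction uses the branching condition to guarantee that the $d - N$ appended copies of $b$ at positions $N+1,\ldots,d$ respect the Borel ordering against $\psi(m)$. The main obstacle is verifying minimality of $u x_b$: for any $x_i$ dividing $u x_b$ (which forces $i \leq b$), the divisor $(u x_b)/x_i$ has $deg_w = a - w_i \leq a - w_b = N < d$ by weight monotonicity, so its $\psi$-image has $R$-degree strictly less than $d$ and cannot lie in $\B{\psi(m)}$. Combining with Theorem \ref{thm:catalan1} applied at row $N$ yields
\[q_b^a(\overline{m}) = \sum_{k=1}^{b} q_k^N(\psi^{-1}(trunc_N(\B{\psi(m)}))) = \sum_{k=1}^{b} C_{w,m}(N,k) = C_{w,m}(a,b).\]
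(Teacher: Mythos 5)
Your argument is correct and follows the same overall route as the paper's: handle $a=d$ via Theorem~\ref{thm:catalan1}, and for $a>d$ split according to whether $N=a-w_b$ is $<d$ or $\geq d$, reducing the former to row $N$ via Theorem~\ref{thm:catalan1} and showing the latter contributes nothing. Two points of comparison. Your base case is tidier than the paper's: you note $trunc_d(\B{\psi(m)})=\B{\psi(m)}$ directly from Lemma~\ref{trunc_lemma}, so $\psi^{-1}(trunc_d(\B{\psi(m)}))=\overline{m}$, whereas the paper reaches the same conclusion through a chain of equivalences using Lemma~\ref{lem:truncgens}. In the subcase $N<d$, your write-up is more careful: you isolate the role of the branching condition $max(trunc_{N+1}(\psi(m)))\geq b$, verify both directions of the bijection $ux_b\leftrightarrow u$, and check minimality of $ux_b$, whereas the paper asserts only the forward inclusion and tacitly assumes the branching condition holds without addressing what happens when it fails (in which case, as you note, both sides vanish). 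One minor imprecision worth flagging: you justify minimality of the degree-$N$ elements of $\psi^{-1}(trunc_N(\B{\psi(m)}))$ by saying that ideal is ``generated in degree $N$''; the $S$-ideal need not be generated in a single weighted degree, but the conclusion still holds because $trunc_N(\B{\psi(m)})$ is generated in $R$-degree $N$, which forces every element of the preimage to have weighted degree at least $N$.
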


\begin{proof}
    The base case is $a=d$, where it suffices to show that 
    \begin{equation*}
        G(\overline{m})_d=G(\psi^{-1}(trunc_d(\B{\psi(m)})))_d.
    \end{equation*}
    We have
    \begin{align*}
        u\in G(\overline{m})_d = G(\psi^{-1}(\B{\psi(m)})) &\iff \psi(u)\in G(\B{\psi(m)})_d \\
        &\iff \psi(u)\in G(trunc_d(\B{\psi(m)}))_d \\
        &\iff u\in \psi^{-1}(G(trunc_d(\B{\psi(m)}))_d)\\
        &\iff u\in G(\psi^{-1}(trunc_d(\B{\psi(m)})))_d,
    \end{align*}
    where the last line follows from Lemma \ref{lem:truncgens}. Since $G(\overline{m})_d=G(\psi^{-1}(trunc_d(\B{\psi(m)})))_d$, we have $q_b^d(\overline{m}) = q_b^d(\psi^{-1}(trunc_d(\B{\psi(m)})))$. From the previous theorem, we know that $C_{w,m}(d,b)=q_b^d(\psi^{-1}(trunc_d(\B{\psi(m)})))$, so we can conclude that $C_{w,m}(d,b) = q_b^d(\overline{m})$.\\
For the inductive step, we should assume that $C_{w,m}(l,b)=q_b^l(\overline{m})$ for $d\leq l<a<d+max(w)-1$. And we want to prove that $C_{w,m}(a,b)=q_b^{a}(\overline{m})$. We will consider two cases:
\begin{itemize}
    \item $a-w_b < d$
    \item $a-w_b \geq d$
\end{itemize}
In the case $a-w_b<d$, any degree $a$ generator is of the form $ux_b$ where
\begin{equation*}
    u\in \psi^{-1}(G(trunc_{a-w_b}(\B{\psi(m)}))_{a-w_b}).
\end{equation*}
Thus,
\begin{align*}
    q_b^a(\overline{m}) &= \sum_{k=1}^b q_k^{a-w_b}(\psi^{-1}(trunc_{a-w_b}(\B{\psi(m)})))\\
    &= \sum_{k=1}^b C_{w,m}(a-w_b,k)\\
    &= C_{w,m}(a,b).
\end{align*}

In the case $a-w_b\geq d$, we see that $C_{w,m}(a,b)=0$ by definition. Additionally, any monomial generator of $\psi^{-1}(trunc_{a}(\B{\psi(m)}))$ with maximum index $b$ must be of the form $vx_b$ for some degree $a-w_b$ generator $v\in G(\psi^{-1}(trunc_{a-w_b}(\B{\psi(m)})))$. But since $a-w_b\geq d$, we have $v\in\overline{m}$, so $vx_b\notin G(\overline{m})$.
\end{proof}

\section{Betti Numbers}\label{sec:betti}

In this section, we (again) consider $S$ to be the weighted polynomial ring with $deg(x_i)=w_i$. Since $w$-stable ideals are strongly stable, the Eliahou-Kervaire resolution gives the minimal free resolution. In particular, we can compute the total Betti numbers as follows.

\begin{theorem}[Eliahou-Kervaire]
    Let $I$ be a (strongly) stable ideal minimally generated by $G(I)=\{m_1,\dots,m_r\}$. Then
    \begin{equation}
        b_i(I)=\sum_{j=1}^r\binom{max(m_j)-1}{i-1}
    \end{equation}
\end{theorem}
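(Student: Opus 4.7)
The plan is to prove the formula by an iterated mapping cone argument on the minimal generating set $G(I)=\{m_1,\ldots,m_r\}$, ordered so that each successive colon ideal reduces to an ideal of variables. Specifically, I would totally order $G(I)$ by a linear extension of the Borel order (placing Borel-larger generators later), set $I_\ell=(m_1,\ldots,m_\ell)$, and build the resolution of $I=I_r$ inductively from the resolutions of $I_\ell$ using the short exact sequence
\begin{equation*}
0\to (S/(I_{\ell-1}:m_\ell))(-\deg m_\ell)\xrightarrow{\,\cdot m_\ell\,} S/I_{\ell-1}\to S/I_\ell\to 0.
\end{equation*}
By induction on $\ell$, I would prove $b_i(S/I_\ell)=\sum_{j=1}^{\ell}\binom{\max(m_j)-1}{i-1}$, the base case $\ell=0$ being trivial and the inductive step delivered by the mapping cone of this sequence.

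The central calculation is the identification
\begin{equation*}
I_{\ell-1}:m_\ell=(x_1,x_2,\ldots,x_{\max(m_\ell)-1}).
\end{equation*}
The inclusion $\supseteq$ uses strong stability directly: for any $i<\max(m_\ell)$, the monomial $x_i m_\ell/x_{\max(m_\ell)}$ is obtained from $m_\ell$ by a single Borel move and hence lies in $I$; since it is strictly smaller than $m_\ell$ in the Borel order, the chosen ordering places it in $I_{\ell-1}$, and multiplying by $x_{\max(m_\ell)}$ shows $x_i\cdot m_\ell\in I_{\ell-1}$. The reverse inclusion follows because any variable $x_j$ with $j\geq\max(m_\ell)$ belonging to the colon ideal would force $m_\ell$ to be redundant in $G(I)$, contradicting minimality.

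Once this colon ideal is identified as a regular sequence of variables, its minimal free resolution is the Koszul complex on $\max(m_\ell)-1$ variables, which has rank $\binom{\max(m_\ell)-1}{i-1}$ in homological degree $i-1$; splicing it onto the resolution of $S/I_{\ell-1}$ via the mapping cone adds precisely $\binom{\max(m_\ell)-1}{i-1}$ to $b_i$. Summing over $\ell=1,\ldots,r$ yields the theorem. The main obstacle will be verifying that every mapping cone is minimal, i.e., that no cancellation occurs in the connecting map; this reduces to checking that each lift of $x_i\cdot m_\ell$ into $I_{\ell-1}$ has coefficients in the maximal ideal, which follows from the fact that the unique earlier generator $m_j$ dividing $x_i m_\ell$ satisfies $\deg m_j<\deg(x_i m_\ell)$, so the cofactor is a nonunit monomial.
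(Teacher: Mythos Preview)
The paper does not prove this theorem; it is quoted as the classical Eliahou--Kervaire result and used as input for what follows. Your mapping-cone/linear-quotients outline is the standard route to that result, but as written it has a genuine gap in the choice of ordering. A linear extension of the Borel partial order is \emph{not} enough to force $I_{\ell-1}:m_\ell=(x_1,\dots,x_{\max(m_\ell)-1})$. Take $I=(x_1^2,\,x_1x_2,\,x_2^3)$: the only Borel relation among the generators is $x_1^2\succ x_1x_2$, so the ordering $m_1=x_1^2$, $m_2=x_2^3$, $m_3=x_1x_2$ is a legitimate linear extension, yet $(x_1^2,x_2^3):x_1x_2=(x_1,x_2^2)$ rather than $(x_1)$, and the resulting mapping cone overcounts $b_2$ and $b_3$. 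The fix is to order the generators by degree first (and, within a degree, by reverse-lex or any refinement placing the $x_1$-heavy monomials earlier); this is exactly the condition that makes the Eliahou--Kervaire decomposition work, and it does not follow from ``linear extension of the Borel order'' alone. Your own $\supseteq$ argument also relies on this: the monomial $x_i m_\ell/x_{\max(m_\ell)}$ need not itself be a minimal generator, so you must produce a generator $m_k$ dividing it and argue $k<\ell$, which requires the degree-first ordering.

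Two smaller points. Your reverse inclusion (``would force $m_\ell$ to be redundant'') tacitly assumes the colon ideal is generated by variables and only treats the case of a single variable $x_j$ with $j\ge\max(m_\ell)$; you need to rule out an arbitrary monomial $u\in\mathbb{K}[x_{\max(m_\ell)},\dots,x_n]$ lying in the colon, which is where the Eliahou--Kervaire decomposition (or an equivalent combinatorial lemma) enters. And your minimality check only handles the degree-one piece of the comparison map; showing that no cancellation occurs in higher homological degrees is the delicate part of the original argument and needs either an explicit description of the comparison map or an appeal to the graded structure together with the degree-first ordering.
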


We can see that the total Betti numbers $b_i(I)$ depend only on the maximum index of each generator. For a principal strongly stable ideal (or any strongly stable ideal generated entirely in degree $d$), we can rewrite this formula in terms of $q_i^d(I)$.

\begin{proposition}[Proposition 6.4 of \cite{francisco2011borel}]\label{prop:fmsbetti}
    Suppose that $I$ is a strongly stable ideal generated entirely in degree $d$. Then
    \begin{equation}\label{ekformula}
        b_i(I) = \sum_{j=1}^n \binom{j-1}{i-1}q_j^d(I)
    \end{equation}
\end{proposition}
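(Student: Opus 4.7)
The plan is to obtain the proposition as an immediate reorganization of the Eliahou--Kervaire formula, regrouping the sum over minimal generators according to their maximum index.

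Starting from the Eliahou--Kervaire expression
\begin{equation*}
b_i(I) = \sum_{j=1}^{r} \binom{\max(m_j)-1}{i-1},
\end{equation*}
I would partition the index set $\{1,\dots,r\}$ according to the value of $\max(m_j)$. Specifically, write $G(I) = \bigsqcup_{k=1}^n \{m \in G(I) : \max(m) = k\}$ and replace the single sum over $j$ by a double sum: first over the possible values $k \in \{1,\dots,n\}$ of the maximum index, then over generators with that maximum index. Inside the inner sum the binomial coefficient $\binom{k-1}{i-1}$ is constant, so it factors out and what remains is simply the count of minimal generators with $\max(m_j) = k$.

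Here is where the hypothesis that $I$ is generated entirely in degree $d$ enters: under this assumption every $m_j \in G(I)$ has $\deg(m_j) = d$, so the count of generators with $\max(m_j) = k$ is exactly $q_k^d(I)$ by the definition of $q_k^d$ given just before Theorem \ref{thm:catalan1}. (Without the degree hypothesis one would only get $\sum_d q_j^d(I)$ in place of $q_j^d(I)$, which is why the proposition requires a single degree.) Substituting yields
\begin{equation*}
b_i(I) = \sum_{k=1}^n \binom{k-1}{i-1}\, q_k^d(I),
\end{equation*}
which is the claimed formula after renaming the summation index.

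There is no real obstacle here; the entire argument is a bookkeeping step on top of Eliahou--Kervaire. The only thing worth emphasizing in the writeup is the role of the single-degree hypothesis, which is what lets the count $\#\{m_j : \max(m_j) = k\}$ coincide with $q_k^d(I)$ rather than with a sum over degrees.
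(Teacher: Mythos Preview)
Your proposal is correct and matches the paper's approach: the paper does not give a separate proof of this cited proposition but introduces it by remarking that the Eliahou--Kervaire formula depends only on the maximum index of each generator and can therefore be ``rewritten in terms of $q_i^d(I)$,'' which is exactly the regrouping you carry out.
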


\begin{example}
    Let $w=(1,1,1)$, $m=x_1x_2x_3^2$, and
    \begin{equation*}
        I=\overline{m}=(x_1x_2x_3^2,x_1^2x_3^2,x_1x_2^2x_3,x_1^2x_2x_3,x_1^3x_3,x_1x_2^3,x_1^2x_2^2,x_1^3x_2,x_1^4)
    \end{equation*}
    The Catalan diagram is shown in Figure \ref{fig:cat2} below.

\begin{figure}[H]
\caption{$C_{(1,1,1),x_1x_2x_3^2}$}
\label{fig:cat2}
\begin{center}
\begin{ytableau}
    \none[s] \\
    \none[0] & \none[1] \\
    \none[1] & 1 \\
    \none[2] & 1 & 1 \\
    \none[3] & 1 & 2 & 2 \\
    \none[4] & *(lightgray)1 & *(lightgray)3 & *(lightgray)5 \\
\end{ytableau}
\end{center}
\end{figure}
The last row of $C_{(1,1,1),x_1x_2x_3^2}$ tells us that we have $q_1^4(I)=1,q_2^4(I)=3,q_3^4(I)=5$. We can apply this information to Equation (\ref{ekformula}) to obtain the Betti numbers.
\begin{align*}
    b_1(I) &= 1\binom{0}{0} + 3\binom{1}{0} + 5\binom{2}{0} = 9\\
    b_2(I) &= 1\binom{0}{1} + 3\binom{1}{1} + 5\binom{2}{1} = 13\\
    b_3(I) &= 1\binom{0}{2} + 3\binom{1}{2} + 5\binom{2}{2} = 5
\end{align*}
We can verify this with the Betti table obtained by Macaulay2.
{\footnotesize
\begin{verbatim}


i11 : I = borelClosure(ideal(x_1*x_2*x_3^2))

                 2   2 2     2     2       3       3   2 2   3     4
o11 = ideal (x x x , x x , x x x , x x x , x x , x x , x x , x x , x )
             1 2 3   1 3   1 2 3   1 2 3   1 3   1 2   1 2   1 2   1

o11 : Ideal of QQ[x ..x ]
                   1   3

i12 : betti res I

             0 1  2 3
o12 = total: 1 9 13 5
          0: 1 .  . .
          1: . .  . .
          2: . .  . .
          3: . 9 13 5

o12 : BettiTally


\end{verbatim}}
\end{example}

The resolution in the previous example is linear (all nonzero Betti numbers are in the same row of Betti table) and $b_i(I)=b_{i,i+d}(I)$. This happens because principal strongly stable ideals are generated entirely in degree $d$. Since a principal $w$-stable ideal $I=\overline{m}$ can have generators of multiple degrees greater than or equal to $d=deg(m)$, we can't use Proposition \ref{prop:fmsbetti} for arbitrary $w$-stable ideals. We combine the Koszul complexes corresponding to the $q_j^d(I)$ to obtain a new formula (Theorem \ref{thm:poincareI}) for the Poincar\'e series of $w$-stable ideals. For a principal $w$-stable ideal, we can write the Poincar\'e series (and therefore the graded Betti numbers) in terms of $C_{w,m}$ (Corollary \ref{cor:principal_poincare}).

\begin{theorem}\label{thm:poincareI}
    Let $I\subset S$ be a $w$-stable ideal. Then the graded Poincar\'e series for $S/I$ is 
    \begin{equation}\label{betti_gen_fxn}
    P_I^S(u,t)=\sum_{m\in G(I)}ut^{deg(m)}\prod_{k=1}^{max(m)-1}(1+ut^{w_k})
    \end{equation}
\end{theorem}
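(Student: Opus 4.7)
The plan is to apply the Eliahou--Kervaire resolution to $I$ as a strongly stable ideal in $S$ and then read off the resulting graded Poincar\'e series in the weighted grading $\deg(x_k)=w_k$. Since $I$ is $w$-stable it is in particular strongly stable, as shown earlier in Section~\ref{subsec:wstablebasics}, so the Eliahou--Kervaire machinery applies.

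Recall that, beyond the rank-one piece in homological degree $0$, the Eliahou--Kervaire resolution of $S/I$ has a basis of admissible symbols $(m;\sigma)$ with $m \in G(I)$ and $\sigma \subseteq \{1,\ldots,\max(m)-1\}$; the symbol $(m;\sigma)$ is a basis element of the free module in homological degree $1 + |\sigma|$ and has multidegree $m \cdot \prod_{k \in \sigma} x_k$. Because the construction is purely combinatorial in the divisibility structure of $G(I)$ and the variable order $x_1 > \cdots > x_n$, minimality and $\mathbb{Z}^n$-multihomogeneity persist after passing to the coarser weighted grading, in which the internal degree of the symbol $(m;\sigma)$ is $\deg(m) + \sum_{k \in \sigma} w_k$.

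Summing the monomial contributions $u^{1+|\sigma|}\, t^{\deg(m) + \sum_{k\in\sigma} w_k}$ over all admissible symbols, pulling the common factor $u\, t^{\deg(m)}$ out of the inner sum for each fixed $m$, and recognizing the remaining sum over subsets $\sigma \subseteq \{1,\ldots,\max(m)-1\}$ as the expansion of $\prod_{k=1}^{\max(m)-1}(1 + u\, t^{w_k})$ yields exactly the formula stated in the theorem. The only step that warrants a moment's care is checking that the Eliahou--Kervaire resolution remains a minimal multigraded free resolution after the change of grading; this is automatic, since the construction itself never uses the grading and the weighted grading is simply a coarsening of the fine $\mathbb{Z}^n$-multigrading with respect to which the resolution is already multihomogeneous.
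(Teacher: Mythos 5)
Your proof is correct and follows essentially the same approach as the paper. Both arguments decompose the Eliahou--Kervaire resolution by generator, observe that each $m\in G(I)$ contributes a Koszul complex on $x_1,\dots,x_{\max(m)-1}$ shifted by $\deg(m)$ (which is exactly the admissible-symbol description you give), and read off the generating function $ut^{\deg(m)}\prod_{k=1}^{\max(m)-1}(1+ut^{w_k})$; your explicit remark that minimality is preserved because the weighted grading is a coarsening of the fine $\mathbb{Z}^n$-grading is a small point the paper leaves implicit, but otherwise the two arguments coincide.
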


\begin{proof}
    First, we will denote the set of subsets of $w$ truncated at $q$ with length $p$ by $A_{p,q}\subset\mathcal{P}(\{w_1,\dots,w_q\})$.

    Fix a monomial $m\in G(I)$. Let $d=deg(m)$ and $q=max(m)-1$. We will now describe the Koszul complex arising from $m$.
    The Eliahou-Kervaire resolution tells us that $\beta_{i,j}(I)$ counts the number of $S(-j)$ appearing in the $i$\textsuperscript{th} piece of the resolution. Since $m$ contributes the resolution of $(x_1,\dots,x_q)$ shifted by $d$, we have the Koszul complex below
    \begin{equation*}
        S(-d)\leftarrow \bigoplus_{a\in A_{1,q}} S(-sum(a)-d)\leftarrow \bigoplus_{a\in A_{2,q}} S(-sum(a)-d)\leftarrow\cdots\leftarrow \bigoplus_{a\in A_{q,q}} S(-sum(a)-d)
    \end{equation*}
    From this, we can see that the contribution to $\beta_{i,j}$ from $m$ is exactly the number of partitions of $j-d$ using distinct entries from $\{w_1,w_2,\dots,w_q\}$.
    The generating function for the number of partitions using distinct entries from $\{w_1,w_2,\dots,w_{q}\}$ is given by
    \begin{equation*}
        \prod_{k=1}^{q}(1+ut^{w_k}),
    \end{equation*}
    where the $u$ exponent corresponds to the number of summands and the $t$ exponent corresponds to the size of the partition $j-d$. In order to make the $t$ exponent correspond to $j$, we multiply the product by $t^d$. Similarly, since the $i$\textsuperscript{th} piece of the resolution corresponds to partitions with $i-1$ summands, we multiply the product by $u$.
    \begin{equation*}
        ut^d\prod_{k=1}^{q}(1+ut^{w_k}).
    \end{equation*}  
    The expression above is the generating function for the graded Betti numbers of $(x_1,\dots,x_q)$.
    Now, we can sum over $m\in G(I)$ as in the Eliahou-Kervaire formula and we're done.
\end{proof}

\begin{corollary}\label{cor:principal_poincare}
    If $I=\overline{m}$ is a principal $w$-stable ideal with $d=deg(m)$, then
    \begin{equation}
        P_I^S(t,u)=\sum_{a=d}^{d+max(w)-1}\sum_{b=1}^n C_{w,m}(a,b)\cdot ut^a\prod_{k=1}^{b-1}(1+ut^{w_k})
    \end{equation}
\end{corollary}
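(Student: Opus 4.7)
The plan is to specialize Theorem \ref{thm:poincareI} to the principal case $I=\overline{m}$ and then collapse the resulting sum over $G(\overline{m})$ by grouping terms according to the two invariants on which each summand actually depends, namely $a := \deg_w(m')$ and $b := \max(m')$. Since the factor $ut^{\deg_w(m')}\prod_{k=1}^{\max(m')-1}(1+ut^{w_k})$ depends only on $(a,b)$, Theorem \ref{thm:poincareI} rewrites as
\[
    P_{\overline{m}}^S(u,t) \;=\; \sum_{a,b} q_b^a(\overline{m})\cdot ut^a\prod_{k=1}^{b-1}(1+ut^{w_k}).
\]

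The next step is to pin down the range of $(a,b)$ with nonzero contribution. For the lower bound $a \ge d$: every $m'\in G(\overline{m})$ satisfies $\psi(m')\in \B{\psi(m)}$, so $\psi(m')$ is divisible by some element of $G(\B{\psi(m)})$, and all such generators have classical degree $d$ in $R$; this forces $\deg_w(m')=\deg(\psi(m'))\ge d$. For the upper bound $a \le d+\max(w)-1$: I would invoke the corollary from Section \ref{sec:principaltrees} identifying $G(\overline{m})$ with the sinks of $\mathcal{T}_{w,m}$. Any sink $v$ arose from an internal parent $v/x_{\max(v)}$ whose weighted degree is strictly less than $d$ (internal vertices of $\mathcal{T}_{w,m}$ branch only when $\deg_w<d$), so $\deg_w(v) < d + w_{\max(v)} \le d+\max(w)$. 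The index $b=\max(m')$ is automatically in $\{1,\dots,n\}$.

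With the range pinned down, the remaining step is a direct substitution: Theorem \ref{thm:catalan2} gives $q_b^a(\overline{m}) = C_{w,m}(a,b)$ for $d \le a \le d+\max(w)-1$, so replacing $q_b^a(\overline{m})$ yields exactly the claimed double sum. I expect the regrouping and the substitution to be routine bookkeeping; the only genuinely nontrivial input is the upper bound on the weighted degree of minimal generators of $\overline{m}$, which is what makes the outer sum terminate at $a=d+\max(w)-1$ and aligns it with the finite vertical extent of the Catalan diagram $C_{w,m}$.
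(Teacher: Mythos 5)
Your proposal is correct and takes essentially the route the paper intends: the corollary is stated without explicit proof, and the natural argument is to specialize Theorem \ref{thm:poincareI} to $I=\overline{m}$, regroup the sum over $G(\overline{m})$ by the pair $(\deg_w(m'),\max(m'))$ to produce the coefficients $q_b^a(\overline{m})$, and then substitute via Theorem \ref{thm:catalan2}. Your justification of the range $d\le a\le d+\max(w)-1$ (lower bound from $\psi(m')\in\B{\psi(m)}$, upper bound from the parent of a sink of $\mathcal{T}_{w,m}$ having $\deg_w<d$) is exactly the bookkeeping needed to match the finite Catalan diagram, and it is sound.
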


While the Poincar\'e series for a principal $w$-stable ideal is not as aesthetically pleasing as (\ref{betti_gen_fxn}), it allows us to compute $P_I^S(t,u)$ directly from the Catalan diagram.

\begin{example}
    Let $w=(3,2,1)$, $m=x_1x_2x_3^2$, and $I=\overline{m}=(x_1x_2x_3^2,x_1^2x_3,x_1x_2^2,x_1^2x_2,x_1^3)$. Notice that $deg(m)=7$ and $deg(x_1^2x_2)=8$ and $deg(x_1^3)=9$. Now, we build the Catalan diagram.

\begin{figure}[H]
\caption{$C_{(3,2,1),x_1x_2x_3^2}$}
\label{fig:cat3}
\begin{center}
\begin{ytableau}
    \none[s] \\
    \none[0] & \none[1] \\
    \none[1] & 0 \\
    \none[2] & 0  \\
    \none[3] & 1  \\
    \none[4] & 0 & 0 \\
    \none[5] & 0 & 1 \\
    \none[6] & 1 & 0 & 1 \\
    \none[7] & *(lightgray)0 & *(lightgray)1 & *(lightgray)2 \\
    \none[8] & *(lightgray)0 & *(lightgray)1 & *(lightgray)0 \\
    \none[9] & *(lightgray)1 & *(lightgray)0 & *(lightgray)0
\end{ytableau}
\end{center}
\end{figure}

To obtain the Catalan diagram above, we add $max(w_i)-1=3-1=2$ rows and fill them in as usual except we ignore any entries in rows $\geq d=deg(m)=7$. Then we can see both $deg(m_j)$ and $max(m_j)$ for each of the five generators directly from the extended Catalan diagram.

Now, we can use Corollary \ref{cor:principal_poincare} to compute the graded Poincar\'e series.
\begin{align*}
    P_I^S(t,u)&=\sum_{a=7}^{9} \sum_{b=1}^3 C_{w,m}(a,b)\cdot ut^a\prod_{k=1}^{b-1}(1+ut^{w_k})\\
    &=ut^7(1+ut^3) + 2ut^7(1+ut^3)(1+ut^2) + ut^8(1+ut^3) + ut^9\\
    &=3ut^7 + ut^8 + ut^9 + 2u^2t^9 + 3u^2t^{10} + u^2t^{11} + 2u^3t^{12}
\end{align*}

The \verb|poincareSeries method| from \verb|wStableIdeals| allows us to compute the Poincar\'e series of $I$, which we can compare with the Betti table above.
{\footnotesize
\begin{verbatim}


i13 : poincareSeries(x_1*x_2*x_3^2,Weights=>{3,2,1})

        12 3    11 2     10 2     9 2    9     8      7
o13 = 2t  u  + t  u  + 3t  u  + 2t u  + t u + t u + 3t u

o13 : ZZ[t..u]


\end{verbatim}}
\end{example}

We can check that this agrees with the Betti table.
{\footnotesize
\begin{verbatim}


i14 : w = {3,2,1}

o14 = {3, 2, 1}

o14 : List

i15 : S = K[x_1,x_2,x_3,Degrees=>w]

o15 = S

o15 : PolynomialRing

i16 : I = borelClosure(ideal(x_1*x_2*x_3^2),Weights=>w)

                  2   2       2   2     3
o16 = ideal (x x x , x x , x x , x x , x )
              1 2 3   1 3   1 2   1 2   1

o16 : Ideal of S

i17 : betti res I

              0 1 2 3
o17 = total:  1 5 6 2
           0: 1 . . .
           1: . . . .
           2: . . . .
           3: . . . .
           4: . . . .
           5: . . . .
           6: . 3 . .
           7: . 1 2 .
           8: . 1 3 .
           9: . . 1  2

o17 : BettiTally


\end{verbatim}}

If we fix a  that different choices of weight vector produce different Betti tables, but the total Betti numbers $b_i(I)$ are unaffected.
{\footnotesize
\begin{verbatim}


i18 : w = {1,1,1}

o18 = {1, 1, 1}

o18 : List

i19 : S = K[x_1,x_2,x_3,Degrees=>w]

o19 = S

o19 : PolynomialRing

i20 : I = borelClosure(ideal(x_1*x_2*x_3^2),Weights=>{3,2,1})

                  2   2       2   2     3
o20 = ideal (x x x , x x , x x , x x , x )
              1 2 3   1 3   1 2   1 2   1

o20 : Ideal of S

i21 : betti res I

             0 1 2 3
o21 = total: 1 5 6 2
          0: 1 . . .
          1: . . . .
          2: . 4 4 1
          3: . 1 2 1

o21 : BettiTally


\end{verbatim}}

Since the total Betti numbers do not depend on the weight vector, a good choice of weight vector $w$ (where $I$ is $w$-stable and the cardinality of $Bgens_w(I)$ is small relative to the cardinality of $Bgens(I)$) reduces the number of computations required to obtain the (total) Betti numbers.\\

\section{Principal Cones}\label{sec:principalcone}

\begin{question}
    Fix a strongly stable ideal $I$. For which weight vectors (if any) is $I$ principally $w$-stable?
\end{question}

This section is dedicated to exploring the question above. Theorem \ref{principalConeThm} provides an answer to this question in the form of an algorithm. This algorithm produces the space of weight vectors for which an Artinian strongly stable ideal is principally $w$-stable. 

Using the language of $n$-ary trees, we can ask if a given tree $\mathcal{T}$ is equal to $\mathcal{T}_{w,m}$ for some $m$ and $w$. It's clear what $m$ should be. It must be the smallest $lex$ vertex in $sink(\mathcal{T})$. Determining $w$ is a more difficult task and we devote the rest of the section to it.

\begin{definition}
    Let $I$ be a strongly stable ideal. Define $\mathcal{T}_I$ as the tree with base vertex $1$ and a directed edge $(v,vx_j)$ if $vx_j=trunc_{deg_{\mathbb{1}}(vx_j)}(g)$ for some $g\in G(I)$ with $g\neq v$.
\end{definition}

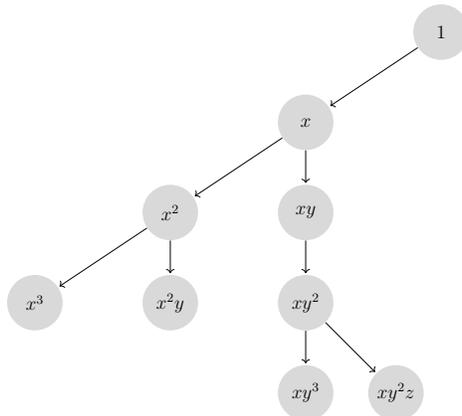
\begin{figure}[H]
\caption{$\mathcal{T}_I$ for $I=(x^3,x^2y,xy^3,xy^2z)$}
\label{fig:tree4}
\begin{center}
\newcommand\x{-3}
\newcommand\y{0}
\newcommand\z{2}
\newcommand\drop{-2}
\newcommand\opac{0.3}
\scalebox{0.6}{
\begin{tikzpicture}[shorten >=1pt,->]
  \tikzstyle{vertex}=[circle,fill=black!15,minimum size=35pt,inner sep=2pt]
  \node[vertex] (1) at (0,0) {$1$};
  \node[vertex] (x) at (\x,\drop)   {$x$};
  \draw (1) -- (x);
  \node[vertex] (x2) at (2*\x,2*\drop) {$x^2$};
  \node[vertex] (xy) at (\x+\y,2*\drop) {$xy$};
  \draw (x) -- (x2);
  \draw (x) -- (xy);
  \node[vertex] (x3) at (3*\x,3*\drop) {$x^3$};
  \node[vertex] (x2y) at (2*\x+\y,3*\drop) {$x^2y$};
  \node[vertex] (xy2) at (\x+2*\y,3*\drop) {$xy^2$};
  \draw (x2) -- (x3);
  \draw (x2) -- (x2y);
  \draw (xy) -- (xy2);
  \node[vertex] (xy3) at (\x+3*\y,4*\drop) {$xy^3$};
  \node[vertex] (xy2z) at (\x+2*\y+\z,4*\drop) {$xy^2z$};
  \draw (xy2) -- (xy3);
  \draw (xy2) -- (xy2z);
\end{tikzpicture}
}
\end{center}
\end{figure}

We can use $\mathcal{T}_I$ to obtain the space of weight vectors for which $I$ is principally $w$-stable. First, we introduce some notation.\\

\begin{lemma}\label{lem:tau}
    Let $m=\mathbf{x}^{(a_1,\dots,a_n)}$ and $v=\mathbf{x}^{(b_1,\dots,b_n)}$ be monomials. For any $1\leq k\leq n$, $k=max(trunc_{deg_w(v)+1}(\psi(m)))$ if and only if both of the following hold.
    \begin{align*}
        \sum_{i=1}^{k-1} w_ia_i &\leq \sum_{i=1}^n w_ib_i\\
        \sum_{i=1}^{k} w_ia_i &> \sum_{i=1}^n w_ib_i\\
    \end{align*}
\end{lemma}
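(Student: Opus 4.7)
The plan is to unwind the three operations $\psi$, $\mathrm{trunc}$, and $\max$ on the right-hand side, and reduce the claim to a simple counting statement about positions inside the factored form of $\psi(m)$.

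First I would rewrite $\psi(m)$ in factored form. Since $\psi(m)=y_1^{w_1a_1}y_2^{w_2a_2}\cdots y_n^{w_na_n}$, its factored form (with indices in weakly increasing order) is the word
\begin{equation*}
\underbrace{y_1\cdots y_1}_{w_1a_1}\,\underbrace{y_2\cdots y_2}_{w_2a_2}\,\cdots\,\underbrace{y_n\cdots y_n}_{w_na_n}.
\end{equation*}
Set $D=\deg_w(v)=\sum_{i=1}^n w_ib_i$, so $\psi(v)$ has total degree $D$ and $\mathrm{trunc}_{D+1}(\psi(m))$ is obtained by keeping the first $D+1$ letters of the word above (assuming $D+1\le \deg(\psi(m))$; the boundary case is handled separately).

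Next I would observe that $\max(\mathrm{trunc}_{D+1}(\psi(m)))$ is precisely the index of the $(D{+}1)$-th letter of this word, since letters are listed in weakly increasing order of index. The $(D{+}1)$-th letter is $y_k$ exactly when $k$ is the unique index satisfying
\begin{equation*}
\sum_{i=1}^{k-1}w_ia_i \;<\; D+1 \;\le\; \sum_{i=1}^{k}w_ia_i,
\end{equation*}
which, upon rewriting strict and non-strict inequalities using that the $w_ia_i$ are integers, is equivalent to the two inequalities in the statement.

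The only real step of care is the boundary case where $D+1>\deg(\psi(m))$, i.e., when $\psi(v)$ already has degree at least that of $\psi(m)$. In that case $\mathrm{trunc}_{D+1}(\psi(m))=\psi(m)$ so $\max$ equals the largest $i$ with $a_i>0$, and one checks that both displayed inequalities force $k$ to equal this same index. I expect this bookkeeping (the off-by-one in ``position $D+1$'' versus ``cumulative sum'' and the convention when $D+1$ exceeds $\deg\psi(m)$) to be the only real obstacle; the rest is a direct translation of definitions. Finally, the converse direction follows because the two inequalities uniquely determine $k$, and our forward argument has identified that $k$ with $\max(\mathrm{trunc}_{D+1}(\psi(m)))$.
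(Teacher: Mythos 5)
Your main argument coincides with the paper's: write $\psi(m)$ in factored form with indices non-decreasing, identify $\max(trunc_{D+1}(\psi(m)))$ (where $D=\deg_w(v)=\sum_i w_ib_i$) as the index of the $(D{+}1)$-th letter, translate that into the sandwich
\[
\sum_{i=1}^{k-1}w_ia_i \;<\; D+1 \;\le\; \sum_{i=1}^{k}w_ia_i,
\]
and use integrality to turn $<D+1$ into $\le D$. The paper packages the same computation slightly differently---splitting the equality $k=\max(\cdots)$ into the pair of one-sided conditions $k\le\max(\cdots)$ and $k\ge\max(\cdots)$ and showing each is equivalent to one of the two displayed inequalities---but the content is identical.

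Your handling of the boundary case is not right, though. If $D+1>\deg(\psi(m))$, i.e.\ $\deg_w(v)\ge\deg_w(m)$, then $\sum_{i=1}^{k}w_ia_i\le\deg(\psi(m))\le D$ for every $k$, so the second inequality $\sum_{i=1}^{k}w_ia_i>D$ has no solution at all; yet $\max(trunc_{D+1}(\psi(m)))=\max(m)$ is still defined. In this range the biconditional therefore fails: the inequalities do not ``force $k$ to equal this same index,'' they exclude every $k$. The lemma really carries the implicit hypothesis $\deg_w(v)<\deg_w(m)$. The paper's terse proof does not address this either, and in the application (Theorem~\ref{principalConeThm}) the hypothesis is enforced separately by the $\rho_{m,u}$ constraint on subsinks. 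You correctly flagged this as the delicate point, but drew the wrong conclusion from it.
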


\begin{proof}
    First, we get a condition to ensure that $k$ is small enough.
    \begin{align*}
        k\leq max(trunc_{deg_w(v)+1}(\psi(m))) &\iff \sum_{i=1}^{k-1} w_ia_i < \Big(\sum_{i=1}^n w_ib_i\Big) + 1\\
        &\iff \sum_{i=1}^{k-1} w_ia_i \leq \sum_{i=1}^n w_ib_i.
    \end{align*}
    We get a second condition when we force $k$ to be large enough.
    \begin{align*}
        k\geq max(trunc_{deg_w(v)+1}(\psi(m))) &\iff \sum_{i=1}^{k} w_ia_i \geq \Big(\sum_{i=1}^n w_ib_i\Big) + 1\\
        &\iff \sum_{i=1}^{k} w_ia_i > \sum_{i=1}^n w_ib_i.
    \end{align*}

\end{proof}

When $k$ is the largest index such that $(v,vx_k)$ is an edge of $\mathcal{T}_I$, denote the space of weight vectors satisfying the two inequalities in Lemma \ref{lem:tau} by $\tau_{m,v}$. Intuitively, $\tau_{m,v}$ is the space of weight vectors for which the branching structure at $v$ is compatible with $\mathcal{T}_{w,m}$.

Let $u=\mathbf{x}^\alpha,v=\mathbf{x}^\beta\in S$ be monomials. Then let $\sigma_{u,v}$ denote the half-space defined by
\begin{equation*}
    \sum_{i=1}^n w_i(b_i-a_i)\geq 0.
\end{equation*}
Similarly, let $\rho_{u,v}$ denote the half-space
\begin{equation*}
    \sum_{i=1}^n w_i(b_i-a_i)<0.
\end{equation*}
Intuitively, $\sigma_{m,v}$ is the half-space of weight vectors for which $deg_w(m)\leq deg_w(v)$ and $\rho_{m,v}$ is the space for which $deg_w(m)>deg_w(v)$.

\begin{theorem}\label{principalConeThm}
    Let $I$ be a strongly stable ideal with $m\in G(I)$ the smallest $lex$ generator of $I$. Let $V$ be the vertex set of $\mathcal{T}_I$. For each $u\in V\setminus sink(\mathcal{T_I})$, let $k_u$ denote the largest variable branching from $u$. Denote the set of vertices which share an edge with a vertex in $sink(\mathcal{T}_I)$ by $subsink(\mathcal{T}_I)$. Then $I$ is principally $w$-stable if and only if
    \begin{equation*}
        w\in\mathcal{P}_I = \Bigg(\bigcap_{v\in sink(\mathcal{T}_I)} \sigma_{m,v}\Bigg )\cap \Bigg(\bigcap_{u\in subsink(\mathcal{T}_I)} \rho_{m,v}\Bigg ) \cap \Bigg (\bigcap_{u\in V\setminus sink(\mathcal{T}_I)} \tau_{m,u,k_u}\Bigg ).
    \end{equation*}
\end{theorem}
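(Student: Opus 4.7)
The plan is to reformulate the question as a tree-equality condition and then decompose that condition into the three families of half-space conditions $\sigma$, $\rho$, and $\tau$. First I would observe that if $I$ is principally $w$-stable, then its principal generator is uniquely determined as the lex-smallest element of $G(I)$, which is $m$ by hypothesis, so $I$ is principally $w$-stable if and only if $I = \overline{m}$. Next I would show that $I = \overline{m}$ if and only if $\mathcal{T}_I = \mathcal{T}_{w,m}$: both trees have sinks in bijection with the minimal generators of the associated ideal, so $\text{sink}(\mathcal{T}_{w,m}) = G(\overline{m})$ and $\text{sink}(\mathcal{T}_I) = G(I)$, and equality of sinks forces equality of the ideals.

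Having made this reduction, I would decompose the tree equality $\mathcal{T}_I = \mathcal{T}_{w,m}$ into three local matching conditions. At each internal vertex $u \in V \setminus \text{sink}(\mathcal{T}_I)$, the largest child of $u$ in $\mathcal{T}_{w,m}$ is $\max(trunc_{\deg_w(u)+1}(\psi(m)))$, and this must equal $k_u$; by Lemma \ref{lem:tau} this is precisely $w \in \tau_{m,u,k_u}$. The strong stability of $I$ ensures that specifying the maximum branching index at $u$ determines the rest of the branching, since the children are the consecutive variables from $x_{\max(u)}$ up to $x_{k_u}$. At each sink $v \in \text{sink}(\mathcal{T}_I)$, the vertex $v$ must also be terminal in $\mathcal{T}_{w,m}$, i.e., $\deg_w(v) \geq \deg_w(m)$, which gives $w \in \sigma_{m,v}$. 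At each $u \in \text{subsink}(\mathcal{T}_I)$, the vertex $u$ must still branch in $\mathcal{T}_{w,m}$, i.e., $\deg_w(u) < \deg_w(m)$, which gives $w \in \rho_{m,u}$.

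For the forward direction, assuming $I$ is principally $w$-stable, so $\mathcal{T}_I = \mathcal{T}_{w,m}$, I would read off each of the three conditions from agreement of the trees at sinks, subsinks, and internal vertices respectively. For the reverse direction, assuming $w \in \mathcal{P}_I$, I would build $\mathcal{T}_{w,m}$ by induction on depth starting from the root: the $\tau$ conditions control the branching at each internal vertex, the $\sigma$ conditions ensure no sink sprouts further descendants in $\mathcal{T}_{w,m}$, and the $\rho$ conditions ensure that subsink vertices actually branch to the generators they are supposed to.

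The main obstacle will be the reverse direction, specifically making sure the tree constructions agree globally rather than only at the specified vertices. The local $\tau$ conditions guarantee matching maximum children at each internal vertex, but one must verify that the two trees share the same set of internal vertices in the first place. This should follow by induction on distance from the root, combined with the strongly stable structure that lets one recover non-maximum children from the maximum child. Once this is in place, both directions reduce to straightforward bookkeeping with the defining inequalities of $\sigma_{m,v}$, $\rho_{m,u}$, and $\tau_{m,u,k_u}$.
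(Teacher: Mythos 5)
Your proposal is correct and takes essentially the same approach as the paper: the paper's proof is a brief three-sentence sketch asserting that the $\sigma$, $\rho$, and $\tau$ families respectively control sink degrees, subsink degrees, and branching structure, which is exactly the decomposition you carry out. The main difference is that you make the argument explicit by reducing to the tree equality $\mathcal{T}_I = \mathcal{T}_{w,m}$ and flagging the need for an induction on depth in the reverse direction, both of which the paper leaves implicit.
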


\begin{proof}
     The first intersection makes sure that degrees of sinks are larger than or equal to $m$. The second intersection ensures that the degrees of subsinks are less than the degree of $m$, and the third intersection guarantees that the branching structure of $\mathcal{T}_I$ and $\mathcal{T}_{w,m}$ agree at each branching vertex.
\end{proof}

We will refer to $\mathcal{P}_I$ as the \textit{principal region} of $I$ and its closure as the \textit{principal cone}. Any lattice point in the interior of the principal cone gives a weight vector $w$ for which $I=\overline{m}$ where $m$ is the smallest $lex$ monomial in $G(I)$. The \verb|principalCone| method allows us to compute the principal cone of a strongly stable ideal in Macaulay2.
{\footnotesize
\begin{verbatim}


i20 : S = K[x,y,z]

o20 = S

o20 : PolynomialRing

i21 : I = ideal(x^3,x^2*y,x*y^3,x*y^2*z)

              3   2      3     2
o21 = ideal (x , x y, x*y , x*y z)

o21 : Ideal of S

i22 : P = principalCone I

o22 = Cone{...1...}

o22 : Cone

i22 : rays P

o22 = | 1 2 2 |
      | 1 1 1 |
      | 0 0 1 |
     
              3       3
o22 : Matrix ZZ  <-- ZZ

i23 : principalWeightVector I

o23 = | 5 |
      | 3 |
      | 1 |
     
              3       1
o23 : Matrix ZZ  <-- ZZ


\end{verbatim}}

\section{Applications}\label{sec:apps}

 In \cite{onn1999cutting}, Onn and Sturmfels describe \textit{corner cuts} as subsets of nonnegative integer vectors which are linearly separable from their complement. Theorems 2.0 and 5.0 of \cite{onn1999cutting} show that vanishing ideals of generic configurations of $k$ points in affine $n$-space have corner cut initial ideals. This is one class of examples which seem to be well-suited for computations as $w$-stable ideals, as illustrated below.

\begin{table}[h]
    \centering
    \begin{tabular}{|c|c|c|c|c|}
        \hline
        \textbf{\phantom{Cone}} & \textbf{$w$} & \textbf{$I$} & \textbf{$Bgens(I)$} & \textbf{$Bgens_w(I)$} \\
        \hline
        $a$ & $(8,8,1)$ & $(x,y,z^8)$ & $\{y,z^8\}$ & \{$z^8$\}\\
        \hline
        $b$ &  $(7,6,1)$ & $(x,y^2,yz,z^7)$ & $\{x,yz,z^7\}$ & $\{z^7\}$ \\
        \hline
        $c$ & $(5,5,1)$ & $(x^2,xy,y^2,xz,yz,z^6)$ & $\{yz,z^6\}$ & $\{z^6\}$ \\
        \hline
        $d$ & $(6,4,1)$ & $(x,y^2,yz^2,z^6)$ & $\{x,y^2,yz^2,z^6\}$ & $\{z^6\}$ \\
        \hline
        $e$ & $(4,3,1)$ & $(x^2,xy,y^2,xz,yz^2,z^5)$ & $\{xz,y^2,yz^2,z^5\}$ & $\{z^5\}$ \\
        \hline
        $f$ & $(10,5,2)$ & $(x,y^2,yz^3,z^5)$ & $\{x,y^2,yz^3,z^5\}$ & $\{z^5\}$ \\
        \hline
        $g$ & $(8,3,2)$ & $(x,y^3,y^2z,yz^3,z^4)$ & $\{x,y^2z,z^4\}$ & $\{z^4\}$ \\
        \hline
        $h$ & $(5,3,2)$ & $(x^2,xy,xz,y^3,y^2z,yz^2,z^4)$ & $\{xz,yz^2,z^4\}$ & $\{yz^2,z^4\}$ \\
        \hline
        $i$ & $(2,2,1)$ & $(x^2,xy,y^2,xz^2,yz^2,z^4)$ & $\{y^2,yz^2,z^4\}$ & $\{z^4\}$ \\
        \hline
    \end{tabular}
    \caption{Weight vectors and generating sets for initial ideals in Figure \ref{fig:ginfan222}}
    \label{tab:ginfan222}
\end{table}

\begin{figure}[h]
    \centering
    \caption{Gr\"obner fan of generic non-homogeneous ideal of type $(2,2,2)$}
    \label{fig:ginfan222}
    \scalebox{0.9}{
    \begin{tikzpicture}
        \node[anchor=south west,inner sep=0] (image) at (0,0) {\includegraphics[width=0.9\linewidth]{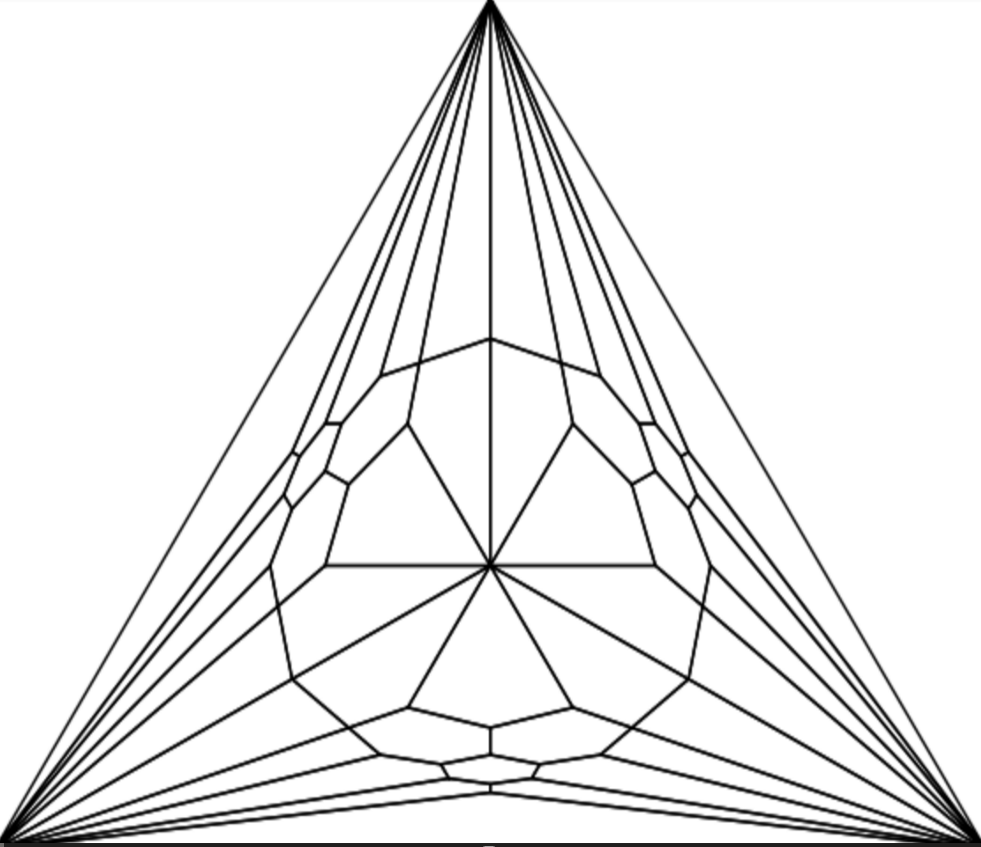}};
        \begin{scope}[x={(image.south east)},y={(image.north west)}]
            \node at (-0.04,0) [color=black] {(1,0,0)};
            \node at (1.04,0) [color=black] {(0,1,0)};
            \node at (0.5,1.02) [color=black] {(0,0,1)};
            \node at (0.5,0.03) {$a$};
            \node at (0.5,0.093) {$c$};
            \node at (0.435,0.13) {$e$};
            \node at (0.38,0.088) {$d$};
            \node at (0.343,0.115) {$f$};
            \node at (0.28,0.14) {$g$};
            \node at (0.38,0.2) {$h$};
            \node at (0.5,0.2) {$i$};
        \end{scope}
    \end{tikzpicture}}
\end{figure}

Figure \ref{fig:ginfan222} shows the (Schlegel diagram of the) Gr\"obner fan of $J=(F_1,F_2,F_3)$ where the $F_i$ are generic, non-homogeneous polynomials of degree 2. The 9 maximal cones with interior points contained in the strict interior of the region where $x>y>z$ are labelled $a,\dots,i$. Note that $b$ does not appear in the figure but is used to denote the maximal cone adjacent to cones $a,c,$ and $d$. One can check that each initial ideal is $w$-stable with respect to the corresponding weight vector in the interior of its maximal cone. This motivates the following.

Table \ref{tab:ginfan222} shows a weight vector $w$ in the interior, the ideal $I=in_<(J)$, $Bgens(I)$, and $Bgens_w(I)$ for each maximal cone. In all cases except $h$, the initial ideal $I$ can be expressed as a principal $w$-stable ideal by using the corresponding weight vector in the first column. Clearly, an Artinian principal $w$-stable ideal is a corner cut. However, the next example shows that no weight vector exists for which the corner cut $(x^2,xy,xz,y^3,y^2z,yz^2,z^4)$ is principally $w$-stable.

\begin{example}\label{counter}
    The ideal $I=(x^2,xy,xz,y^3,y^2z,yz^2,z^4)\subset\mathbb{K}[x,y,z]$ is a corner cut, because it is the initial ideal of a generic, non-homogeneous ideal (corresponding to $h$ in Table \ref{tab:ginfan222}). Since $z^4$ cannot generate $yz^2$ without also generating $y^2$, there exists no weight vector $w$ such that $I=\overline{z^4}$.
\end{example}

\begin{question}
    Assume $I\subset\mathbb{K}[x_1,\dots,x_n]$ is a corner cut. 
    Is there an upper bound on the minimal number of weighted Borel generators (for a suitable choice of weight vector) for $I$?\\
\end{question}

Recall that a monomial order $<$ on $S=\mathbb{K}[x_1,\dots,x_n]$ is a \textit{total} order on the set of monomials which respects multiplication ($u\leq v\implies ut\leq vt$ for any monomials $t,u,v$). For a given monomial order $<$ and a fixed ideal $I\subset S$, one can obtain the monomial ideal $in_<(I)$. A weight vector $w=(w_1,\dots,w_n)$ gives a \textit{partial} order on the set of monomials. Note that the initial ideal of $I$ with respect to $w$, denoted $in_w(I)$ might not be a monomial ideal. For background on initial ideals and their relationship to the Gr\"obner fan, see \cite{fkj_computinggrobfans}.

\begin{conjecture}\label{conj1}
    Fix a generic non-homogeneous ideal $I$ and a monomial order $<$. If $w\in\mathbb{N}^n_{>0}$ is a weight vector and $in_w(I)=in_<(I)$, then $in_<(I)$ is $w$-stable.
\end{conjecture}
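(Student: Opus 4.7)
The plan is to relate the $w$-stability of $\text{in}_<(I)$ to the classical Borel-fixedness of an associated ideal in $R$, using the ring homomorphism $\psi: S \to R$ defined by $\psi(x_i) = y_i^{w_i}$. The strategy has three main parts.

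First, I would establish compatibility of $\psi$ with initial ideals. A direct calculation gives $\deg_{\mathbb{1}}(\psi(f)) = \deg_w(f)$ for every monomial $f$, so $\psi$ carries the $w$-grading on $S$ to the standard grading on $R$. Moreover, because the monomial order on $R$ refines $y_1 > \cdots > y_n$ and $\psi$ preserves the lex structure on monomials (as $w_i > 0$), one has $\psi(\text{in}_<(f)) = \text{in}_<(\psi(f))$ for each $f \in I$. Consequently $\psi$ maps a Gr\"obner basis of $I$ to a Gr\"obner basis of $J := \psi(I)\cdot R$, giving $\text{in}_<(J) \supseteq \psi(\text{in}_<(I)) \cdot R$.

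Second, I would show $\text{in}_<(J) = \B{\psi(\text{in}_<(I))}$. Since the coefficients of the generators of $J$ inherit genericity from those of $I$, a Bayer--Stillman-type argument applied in $R$ should give that $\text{in}_<(J)$ is Borel-fixed (strongly stable) in $R$. Combining this with the inclusion from the previous step forces $\text{in}_<(J) \supseteq \B{\psi(\text{in}_<(I))}$, and the hypothesis $\text{in}_w(I) = \text{in}_<(I)$ rules out extra monomials appearing in $\text{in}_<(J)$ (since every term in a $w$-initial form of an element of $I$ pulls back to a term of $\text{in}_<(I)$ under $\psi$). Taking $\psi$-preimages then yields
\[
\psi^{-1}(\text{in}_<(J)) = \psi^{-1}(\B{\psi(\text{in}_<(I))}) = \overline{\text{in}_<(I)}
\]
by the definition of weighted Borel closure. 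On the other hand, the compatibility from the first step combined with $\text{in}_w(I) = \text{in}_<(I)$ shows $\psi^{-1}(\text{in}_<(J)) = \text{in}_<(I)$. Equating these identities gives $\overline{\text{in}_<(I)} = \text{in}_<(I)$, which is precisely the $w$-stability condition.

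The main obstacle is the second step: while the coefficients of the generators of $J$ are generic, their exponent structure is rigid, since every monomial $y^\beta$ in a generator satisfies $w_i \mid \beta_i$. This restricts the natural action of the Borel subgroup on $J$, so a direct appeal to Galligo or Bayer--Stillman does not apply. Plausible approaches to overcome this include (a) adapting Galligo's argument to keep track of the exponent constraint imposed by $\psi$, using that Borel-fixedness needs only upper-triangular symmetries compatible with the $w$-grading, or (b) exploiting the one-parameter family $\alpha_t: x_i \mapsto t^{w_i} x_i$ combined with a generic upper-triangular perturbation of $I$, and tracking its initial ideal through the $t\to 0$ degeneration so that the resulting limit is simultaneously Borel-fixed and of $\psi$-type.
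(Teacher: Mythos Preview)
The statement you are attempting to prove is labeled \emph{Conjecture}~\ref{conj1} in the paper and is explicitly left open: the paper offers only the computational evidence of Table~\ref{tab:ginfan222} and Figure~\ref{fig:ginfan222} in its support and gives no proof. There is therefore nothing in the paper to compare your argument against, and any complete proof would constitute a new result.

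As for the proposal itself, the gap you flag in your second step is real and, as written, fatal. The ideal $J=\psi(I)\cdot R$ is generated by polynomials whose supports lie in the sublattice $\{(w_1a_1,\dots,w_na_n):a\in\mathbb{Z}_{\ge 0}^n\}$, so $J$ is highly non-generic in $R$ and the Galligo/Bayer--Stillman theorem simply does not apply. Your suggested remedies (a) and (b) are only sketches: for (a), an upper-triangular change of coordinates in $R$ does not preserve the sublattice constraint, so it is unclear what ``Borel-fixedness compatible with the $w$-grading'' would even mean on the $R$ side beyond restating $w$-stability on the $S$ side; for (b), the torus degeneration $x_i\mapsto t^{w_i}x_i$ already encodes the passage to $in_w(I)$, and adding a generic upper-triangular perturbation changes the ideal $I$ itself, so you would be proving the conjecture for a different (perturbed) ideal rather than the given one.

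There are also earlier soft spots. The claim $\psi(\mathrm{in}_<(f))=\mathrm{in}_<(\psi(f))$ presupposes a monomial order on $R$ compatible with $<$ via $\psi$, which you have not specified; and the assertion that $\psi$ carries a Gr\"obner basis of $I$ to one of $J$ needs an argument (S-pair reductions in $R$ may produce monomials outside $\psi(S)$). Finally, the sentence ``the hypothesis $in_w(I)=in_<(I)$ rules out extra monomials appearing in $\mathrm{in}_<(J)$'' is asserted without justification and is exactly where the difficulty of the conjecture resides.
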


\section{Conclusion}

Restrictions on the structure of monomial ideals can reduce the number of monomial generators needed to describe a fixed monomial ideal $I$. Francisco, Mermin, and Schweig show how to use the structure of strongly stable ideals to compress the generating set from the classic monomial generators $G(I)$ to a subset $Bgens(I)$. In this paper, we showed how a choice of weight vector $w$ restricts the set of strongly stable ideals to a subset of monomial ideals called $w$-stable ideals. This restriction allows us to describe $w$-stable ideals completely in terms of their weighted Borel generators $Bgens_w(I)$. In summary, for a $w$-stable ideal $I$, we have
\begin{equation*}
    Bgens_w(I)\subseteq Bgens(I)\subseteq G(I)
\end{equation*}

In Section \ref{subsec:wstablebasics}, we showed that computations on $w$-stable ideals can be decomposed into computations on principal $w$-stable ideals whose combinatorial structure was examined in Section \ref{sec:principaltrees}. Formulas for the Stanley decomposition, Hilbert series, and Betti numbers of principal $w$-Stable ideals were developed in Sections \ref{sec:decomposition}, \ref{sec:hilbseries}, and \ref{sec:betti}. In Section \ref{sec:principalcone}, we showed how to compute the space of weight vectors for which a given strongly stable ideal is principally $w$-stable. In Section \ref{sec:apps}, we explored generic, non-homogeneous initial ideals and presented evidence that using a weight vector $w$ corresponding to the given monomial order produces a $w$-stable ideal (Conjecture \ref{conj1}).

\newpage
\printbibliography
\end{document}